\title{Small Dehn surgery and $SU(2)$}
\author[John A. Baldwin]{John A. Baldwin}
\address{Department of Mathematics \\ Boston College}
\email{john.baldwin@bc.edu}
\author{Zhenkun Li}
\address{Department of Mathematics\\Stanford University}
\email{zhenkun@stanford.edu}
\author[Steven Sivek]{Steven Sivek}
\address{Department of Mathematics\\Imperial College London}
\email{s.sivek@imperial.ac.uk}
\author{Fan Ye}
\address{Department of Pure Mathematics and Mathematical Statistics\\University of Cambridge}
\email{fy260@cam.ac.uk}
\thanks{JAB was supported by  NSF FRG Grant DMS-1952707.}
\newtheorem*{rep@theorem}{\rep@title}
\newcommand{\newreptheorem}[2]{%
\newenvironment{rep#1}[1]{%
 \def\rep@title{#2 \ref{##1}}%
 \begin{rep@theorem}}%
 {\end{rep@theorem}}}
\newtheorem {theorem}{Theorem}
\newtheorem {lemma}[theorem]{Lemma}
\newtheorem {proposition}[theorem]{Proposition}
\newtheorem {corollary}[theorem]{Corollary}
\newtheorem {conjecture}[theorem]{Conjecture}
\numberwithin{equation}{section}
\numberwithin{theorem}{section}
\theoremstyle{definition}
\newtheorem{definition}[theorem]{Definition}
\newtheorem{remark}[theorem]{Remark}
\newtheorem*{remark*}{Remark}
\newlist{pcases}{enumerate}{1}
\setlist[pcases]{
  label=\bf{Case~\arabic*:}\protect\thiscase.~,
  ref=\arabic*,
  align=left,
  labelsep=0pt,
  leftmargin=0pt,
  labelwidth=0pt,
  parsep=0pt
}
\newcommand{\case}[1][]{%
  \if\relax\detokenize{#1}\relax
    \def\thiscase{}%
  \else
    \def\thiscase{~#1}%
  \fi
  \item
}
\newcommand{\Z}{\mathbb{Z}}
\newcommand{\R}{\mathbb{R}}
\newcommand{\C}{\mathbb{C}}
\newcommand{\spc}{\operatorname{Spin}^c}
\newcommand{\Hom}{\operatorname{Hom}}
\newcommand\hf{\mathit{HF}}
\newcommand\SHI{\mathit{SHI}}
\newcommand\KHI{\mathit{KHI}}
\newcommand\HFhat{\widehat{\mathit{HF}}}
\newcommand\Is{I^\#}
\newcommand\HFK{\widehat{\mathit{HFK}}}
\DeclareFontFamily{U}{mathx}{\hyphenchar\font45}
\DeclareFontShape{U}{mathx}{m}{n}{
      <5> <6> <7> <8> <9> <10>
      <10.95> <12> <14.4> <17.28> <20.74> <24.88>
      mathx10
      }{}
\DeclareSymbolFont{mathx}{U}{mathx}{m}{n}
\DeclareMathAccent{\widecheck}{0}{mathx}{"71}
\newcommand{\img}{\operatorname{Im}}
\newcommand{\ad}{\operatorname{ad}}
\newcommand{\pt}{\mathrm{pt}}
\newcommand{\cinvt}{\nu^\sharp}
\newcommand{\dcover}{\Sigma}
\newcommand{\chigr}{\chi_{\mathrm{gr}}}
\DeclareFontFamily{OMX}{MnSymbolE}{}
\DeclareSymbolFont{MnLargeSymbols}{OMX}{MnSymbolE}{m}{n}
\DeclareFontShape{OMX}{MnSymbolE}{m}{n}{
    <-6>  MnSymbolE5
   <6-7>  MnSymbolE6
   <7-8>  MnSymbolE7
   <8-9>  MnSymbolE8
   <9-10> MnSymbolE9
  <10-12> MnSymbolE10
  <12->   MnSymbolE12
}{}
\DeclareFontShape{OMX}{MnSymbolE}{b}{n}{
    <-6>  MnSymbolE-Bold5
   <6-7>  MnSymbolE-Bold6
   <7-8>  MnSymbolE-Bold7
   <8-9>  MnSymbolE-Bold8
   <9-10> MnSymbolE-Bold9
  <10-12> MnSymbolE-Bold10
  <12->   MnSymbolE-Bold12
}{}
\let\llangle\@undefined
\let\rrangle\@undefined
\DeclareMathDelimiter{\llangle}{\mathopen}%
                     {MnLargeSymbols}{'164}{MnLargeSymbols}{'164}
\DeclareMathDelimiter{\rrangle}{\mathclose}%
                     {MnLargeSymbols}{'171}{MnLargeSymbols}{'171}
\newcounter{desccount}
\newcommand{\descref}[1]{\hyperref[#1]{#1}}
\tikzset{every picture/.style=thick}
\tikzset{link/.style = { white, double = black, line width = 1.75pt, double distance = 1.25pt, looseness=1.75 }}
\tikzset{crossing/.style = {draw, circle, dotted, minimum size=0.5cm, inner sep=0, outer sep=0}}
\pgfplotsset{compat=1.12}
\begin{document}

\begin{abstract}
We prove that the fundamental group of $3$-surgery on a nontrivial knot in $S^3$ always admits an irreducible $SU(2)$-representation.
This answers a question of Kronheimer and Mrowka dating from  their work on the Property P conjecture. An important ingredient in our proof is a relationship between instanton Floer homology and the symplectic Floer homology of genus-2 surface diffeomorphisms, due to Ivan Smith. We use similar arguments  at the end to extend our main result to infinitely many   surgery slopes in the interval $[3,5)$. %proved by Ivan Smith via an equivalence of Fukaya categories inspired by the homological mirror symmetry conjecture. %established by Ivan Smith using ideas from homological mirror symmetry.
\end{abstract}

\maketitle

\section{Introduction} \label{sec:intro}
The $SU(2)$-representation variety \[R(Y) = \Hom(\pi_1(Y),SU(2))\] associated with a 3-manifold $Y$ is an object of central importance in instanton gauge theory. A basic question about such varieties is whether they contain irreducible representations. A homomorphism into $SU(2)$ is reducible if and only if it has abelian image, inspiring the following definition:%A homomorphism to $SU(2)$ is reducible if and only if it is abelian, inspiring the following definition: %A basic question is whether it detects more than the abelianization of $\pi_1(Y)$, leading to the following definition:%about such varieties is whether they contain irreducibles, which us leads to:

\begin{definition}
A 3-manifold $Y$ is \emph{$SU(2)$-abelian} if every $\rho\in R(Y)$ has abelian image.\footnote{If $b_1(Y)=0$ then $\rho$ has abelian image if and only if it has cyclic image.} In particular, $R(Y)$ contains an irreducible if and only if $Y$ is not $SU(2)$-abelian.
\end{definition}

%\begin{definition}
%A 3-manifold $Y$ is \emph{$SU(2)$-abelian} if every $\rho\in R(Y)$ has abelian image, or, equivalently, if every $\rho\in R(Y)$ is reducible.\footnote{If $b_1(Y)=0$ then $\rho$ has abelian image if and only if it has cyclic image.}
%\end{definition}
%Since a homomorphism to $SU(2)$ has abelian image if and only if it is reducible, we have:
%\begin{definition}
%A 3-manifold $Y$ is \emph{SU(2)-abelian} if every $\rho\in R(Y)$ has abelian image.\footnote{If $b_1(Y)=0$ then $\rho$ has abelian image if and only if it has cyclic image.}
%\end{definition}

The classification of $SU(2)$-abelian 3-manifolds is a wide-open problem, even among Dehn surgeries on knots.\footnote{Though see \cite{lpcz,bs-toroidal} and \cite{sz-menagerie} for  progress on this problem in the cases of  toroidal  homology spheres and non-hyperbolic geometric manifolds, respectively.} In their proof of the Property P conjecture \cite{km-p}, Kronheimer and Mrowka proved that if $K$ is a nontrivial knot in $S^3$ then $S^3_1(K)$ is not $SU(2)$-abelian (hence, not a homotopy sphere). They then strengthened this result  in \cite{km-su2}, proving under the same hypothesis that $S^3_r(K)$ is not $SU(2)$-abelian for any rational number $r\in [0,2]$. 

It is natural to ask whether this also holds for larger values of $r$. Kronheimer and Mrowka explicitly posed this question in \cite{km-su2} for the next two integers, $r=3$ and $4$, noting that it fails for $r=5$ because $5$-surgery on the right-handed trefoil is a lens space. Baldwin and Sivek answered half of this question in \cite{bs-lspace}, proving that $S^3_4(K)$ is not $SU(2)$-abelian for any nontrivial knot $K\subset S^3$. Our main theorem  answers the remaining half:

\begin{theorem}\label{thm:main}
$S^3_3(K)$ is not $SU(2)$-abelian for any nontrivial knot $K\subset S^3$.
\end{theorem}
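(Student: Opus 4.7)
Suppose for contradiction that $Y := S^3_3(K)$ is $SU(2)$-abelian for some nontrivial $K$. By the Kronheimer--Mrowka rank-counting argument (as in their original work on Property P and on small-slope $SU(2)$-abelianness), this forces
\[
\dim_{\C} \Is(Y) \;=\; |H_1(Y;\Z)| \;=\; 3,
\]
so $Y$ is an instanton $L$-space. The plan is to combine this rank equality with the instanton integer surgery formula to reduce the problem to finitely many Alexander polynomial classes for $K$, and then to use Ivan Smith's identification of instanton Floer homology with the symplectic Floer homology of a genus-$2$ surface diffeomorphism to eliminate them.

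First, I would turn the rank equality into constraints on $K$. Plugging it into the Baldwin--Sivek integer surgery formula, which expresses $\Is(S^3_n(K))$ in terms of $\KHIt(K)$, together with the surgery exact triangle connecting slopes $3$, $4$, and $\infty$, identifies $K$ as an instanton $L$-space knot (with $3$ among its positive $L$-space slopes). Structural results for such knots---parallel to Ozsv\'ath--Szab\'o in Heegaard Floer---then force the smallest positive $L$-space slope to be $2g(K)-1$, hence $g(K)\leq 2$, and pin $\Delta_K(t)$ (identified with the graded Euler characteristic of $\KHIt(K)$) to match that of a Heegaard Floer $L$-space knot of the same genus. The only surviving possibilities are $\Delta_K = \Delta_{T_{2,3}}$ (genus $1$) or $\Delta_K = \Delta_{T_{2,5}}$ (genus $2$), and $\KHIt(K)$ is then entirely determined.

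Second, I would close each remaining Alexander polynomial class. For $K = T_{2,3}$ and $K = T_{2,5}$ themselves, $S^3_3(K)$ is a Brieskorn sphere, which is already known to carry irreducible $SU(2)$-representations, so those specific knots are immediately excluded; the real work is to handle knots that merely share these Alexander polynomials (e.g.\ twist or pretzel knots with trefoil-like $\Delta_K$). For these I would present $Y$ via a suitable open book or branched double cover exhibiting a genus-$2$ surface, and invoke Smith's isomorphism to identify $\Is(Y)$ with the symplectic Floer homology of a diffeomorphism $\phi_K$ of a genus-$2$ surface determined by $K$. A Nielsen--Thurston / Lefschetz fixed-point analysis of $\phi_K$ should then exhibit at least four symplectic fixed points, yielding $\dim_{\C}\Is(Y)\geq 4$ and the desired contradiction.

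\textbf{Main obstacle.} The hardest step is the symplectic one: because the instanton $L$-space theory only controls $K$ through the invariant $\KHIt(K)$ (which is determined by $\Delta_K$ in these cases), one needs a lower bound on symplectic Floer rank that is robust across all knots in each Alexander polynomial class, not just the torus knot models. The genus-$2$ case $\Delta_K = \Delta_{T_{2,5}}$ is likely the most delicate, since $3 = 2g(K)-1$ is precisely the borderline slope and the associated symplectic Floer calculation leaves the least room to produce the extra fixed point that rules out rank $3$.
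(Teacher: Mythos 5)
Your opening reduction is essentially the paper's: assuming $Y=S^3_3(K)$ is $SU(2)$-abelian forces $\dim_\C \Is(Y)=3$, so $K$ is an instanton L-space knot of genus $1$ or $2$; the trefoil and $T_{2,5}$ are excluded because their $3$-surgeries are Seifert fibered spaces known to carry irreducibles; and the Li--Ye surgery formula pins $\Delta_K(t)=t^2-t+1-t^{-1}+t^{-2}$, so the real problem is the class of genus-$2$ fibered, strongly quasipositive, hyperbolic knots sharing this Alexander polynomial. Up to that point you and the paper agree.

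The gap is in how you propose to close this class. Smith's theorem is a statement about the instanton homology of a genus-$2$ \emph{mapping torus} with a nontrivial admissible $SO(3)$ bundle; in this setting it applies to the fibered $0$-surgery $S^3_0(K)$, not to the rational homology sphere $Y=S^3_3(K)$, so there is no identification of $\Is(Y)$ with the symplectic Floer homology of a genus-$2$ diffeomorphism, and hence no way to get a contradiction by producing "at least four fixed points." Worse, the fixed-point count runs in the opposite direction from what you want: the paper computes $\dim_\C I_*(S^3_0(K))_w=6$, feeds this into Smith's isomorphism and Cotton--Clay's computation for pseudo-Anosov maps, and concludes that the symplectic Floer cohomology of the capped-off monodromy has dimension $1$, i.e.\ the monodromy of $K$ is fixed-point \emph{free} --- there is no contradiction at this stage, only structural information. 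The contradiction in the paper requires a substantial further chain that your proposal has no substitute for: fixed-point-freeness (via the arguments of \cite{bhs-cinquefoil}) makes $K$ doubly periodic with unknotted quotient, so $Y\cong\dcover(L(3,2),J)$ for a primitive knot $J$; the $SU(2)$-abelian hypothesis together with a binary dihedral representation count and a nondegeneracy argument shows $J$ is instanton Floer simple, $\dim_\C\KHI(L(3,2),J)=3$; the Li--Ye comparison of graded Euler characteristics of $\KHI$ and $\HFK$, plus constraints on Euler characteristics of primitive knots, forces $J$ to have rational genus $0$, so the branched cover is a lens space; and finally the Wang--Zhou theorem (a doubly periodic knot with a cyclic surgery is a torus knot) contradicts hyperbolicity. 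You flag precisely this step as the unresolved "main obstacle," and indeed without it the argument does not close.
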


\begin{remark}\label{rmk:other}
We ultimately expect that  $r$-surgery on a nontrivial knot is not $SU(2)$-abelian  for any  rational number $r\in [0,5)$. Our techniques can be used to prove this for infinitely many additional  slopes in the interval $[3,5)$,  as discussed in \S\ref{ssec:other}.
\end{remark}

%\begin{remark} In \cite{}, Bleiler and Litherland conjectured 

%\end{remark}

Our proof involves a novel blend of ingredients, including  a relationship between instanton Floer homology and the symplectic Floer homology of genus-2 surface diffeomorphisms, due to   Smith  \cite{smith};  ideas of Baldwin--Hu--Sivek from their  recent work on cinquefoil detection \cite{bhs-cinquefoil}; and  new  results of Li--Ye relating the graded Euler characteristics of instanton and Heegaard    knot Floer homology \cite{li-ye-Euler, li-ye-enhanced-Euler}. 
Along the way, we partially characterize   instanton L-space knots of genus 2 (Theorem \ref{thm:genus-2-lspace} and Remark \ref{rmk:genus-2}), and  prove several other results (Proposition \ref{prop:surgery-cover}, and Theorems \ref{thm:floer-simple} and \ref{thm:classify-simple-knots}) which may be of independent interest. 
We outline our proof of Theorem \ref{thm:main} in detail below.

\subsection{Proof outline} Suppose $K\subset S^3$ is a nontrivial knot. Let us assume for a contradiction that $S^3_3(K)$ is $SU(2)$-abelian. This  surgered manifold is thus an instanton L-space \cite{bs-stein}. It then  follows from \cite{bs-lspace} that $K$ is   fibered and strongly quasipositive  of genus 1 or 2. %We argue by direct calculation that $K$ has genus 2 and is not the cinquefoil $T_{2,5}$. 
If the genus is 1, then $K$ is the right-handed trefoil, but $3$-surgery on this trefoil is Seifert fibered with base orbifold $S^2(2,3,3)$ and is therefore not $SU(2)$-abelian, by \cite{sz-menagerie}, a contradiction. We similarly rule out the possibility that $K$ is the cinquefoil $T_{2,5}.$ 

Thus, $K\not\cong T_{2,5}$ is a genus-2 instanton L-space knot. % It then follows from  \cite{bs-concordance} that the  framed instanton homology of $0$-surgery on $K$ is 6-dimensional, \begin{equation}\label{eqn:0-surgery}\Is(S^3_0(K),\mu)\cong \C^6.\end{equation} Moreover,  
Recent work of Li and Ye \cite{li-ye-surgery} then implies  that $K$ has Alexander polynomial \[ \Delta_K(t) = t^2-t+1-t^{-1}+t^{-2}, \] from which we conclude as in \cite{bhs-cinquefoil} that $K$ is a hyperbolic knot.  Let $(S,h)$ be an abstract open book corresponding to the fibration associated with $K$.   The hyperbolicity of $K$ implies that the monodromy $h$ is freely isotopic to a pseudo-Anosov homeomorphism $\psi:S\to S.$%We similarly rule out the case that $K$ is the cinquefoil $T_{2,5}$. 

In \cite{smith}, Smith proved that (a version of) the instanton Floer homology of the mapping torus of a genus-2 surface diffeomorphism encodes the symplectic Floer cohomology of the diffeomorphism. %Interestingly, Smith proved this result via an equivalence of certain Fukaya categories, motivated by homological mirror symmetry and classical theorems about derived categories of sheaves. 
%\todo{SS: better here than in the abstract, but is it really necessary?  It's at best a parenthetical remark.}
%Interestingly, this result follows  from Smith's proof of a certain equivalence of Fukaya categories, which was  inspired (via homological mirror symmetry) by classical theorems about derived categories of  sheaves.
We  use Smith's result, together with a calculation of the framed instanton homology $\Is(S^3_0(K),\mu)$, to prove that the homeomorphism $\psi$ has no fixed points, following ideas  in \cite{bhs-cinquefoil}. We then apply results from \cite{bhs-cinquefoil}  to conclude that $K$ is doubly periodic with unknotted quotient $A$ and axis $B$, where $A$ and $B$ have linking number $5$.%$\textrm{lk}(A,B)=5$.

We may therefore express $S^3_3(K)$ as the branched double cover 
of the lens space \[L= S^3_{3/2}(A)\cong L(3,2),\]   branched along the primitive knot $J\subset L$ induced by $B$ in this surgery on $A$, \begin{equation}\label{eqn:3-surgery-bdc}S^3_3(K) \cong \dcover(L,J).\end{equation}  Under the assumption that $S^3_3(K)$ is $SU(2)$-abelian, we  prove that every representation \[\rho:\pi_1(L\setminus N(J))\to SU(2)\] which sends a fixed meridian of $J$ to $i$ has finite cyclic image. There are exactly $|H_1(L)|=3$ such representations.
%Moreover, we prove that these representations correspond to nondegenerate generators of the complex computing the instanton knot homology of $J$, and conclude that
We prove that these representations are nondegenerate as generators of the instanton knot homology of $J$, and thereby conclude that
$J$ is an \emph{instanton Floer simple knot}, meaning that \[\dim_\C\KHI(L,J) = \dim_\C \Is(L) = 3.\] We deduce   that $J$ is  rationally fibered, and that the graded Euler characteristic of $\KHI(L,J)$ is given by  \[\chigr(\KHI(L,J)) = t^{-n} + 1 + t^n,\] for some $n = g(F)+1,$ where $F$ is a minimal-genus rational Seifert surface for $J$.%, and  $|k|<n$.

In recent work \cite{li-ye-Euler}, Li and Ye proved an equality between the  graded Euler characteristics of instanton knot homology and (Heegaard) knot Floer homology, \[\chigr(\HFK(L,J))= \pm \chigr(\KHI(L,J)).\] We combine this with constraints on the graded Euler characteristic of knot Floer homology to conclude that $n=1$. Thus, $J$ has (rational) genus 0, which implies that its fibered exterior is  a solid torus.  It follows that $\dcover(L,J)$, and hence $3$-surgery on $K$, is a lens space.  Since $K$ is doubly periodic and has a lens space surgery, a result of Wang and Zhou \cite{wang-zhou} says that $K$ must be a torus knot, a contradiction.
%Then $K$ is an unknot, by \cite{kmos}, a contradiction.

\subsection{Other small surgeries}\label{ssec:other}
As mentioned above, we suspect that if $K\subset S^3$ is a nontrivial knot, then $S^3_r(K)$ is not $SU(2)$-abelian for any rational  $r\in [0,5)$. Long after Kronheimer and Mrowka's proof for $r\in [0,2]$, Baldwin and Sivek proved this  \cite{bs-lspace} for the dense subset of  the interval $(2,3)$ consisting of rationals with prime power numerators. Proving it for the corresponding subset of $[3,5)$ is much harder, as explained in \S\ref{sec:other}. Indeed, our main theorem concerns the single  case $r=3$.
  Nevertheless, our techniques can  be used to extend Theorem \ref{thm:main} to infinitely many surgery slopes in both  $[3,4)$ and $[4,5)$, and even to a special dense subset of slopes in $(5,7)$.  Our main result in this vein is:

\begin{theorem}
\label{thm:other-small-surgeries}  Let $p$ and $q$ be coprime positive integers with $p/q\in [3,7)$, where  $p$ is a prime power. If either
\begin{itemize}
\item $p$ is even,
\item $p$ is odd and $\gcd(p,5)=1$ and $p/q\in [4,5)$, or
\item $p$ is odd and $\gcd(p,5)=1$ and $p/q\in [3,4)$ with $23p-9 \leq 80q \leq 25p+9$,
\end{itemize}
then $S^3_{p/q}(K)$ is not $SU(2)$-abelian for every nontrivial knot $K\subset S^3$.
\end{theorem}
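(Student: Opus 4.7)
The plan is to extend the strategy of Theorem~\ref{thm:main} to these larger slopes. Assume for contradiction that $S^3_{p/q}(K)$ is $SU(2)$-abelian. Since $|H_1(S^3_{p/q}(K))| = p$ is a prime power, every $SU(2)$-representation of the fundamental group has cyclic image of order dividing $p$; a count of such abelian representations combined with the framed instanton surgery formula should give $\dim_\C \Is(S^3_{p/q}(K)) = p$, so that $S^3_{p/q}(K)$ is an instanton L-space. By \cite{bs-lspace}, $K$ is then fibered and strongly quasipositive, and the $L$-space genus bound gives $g(K) \le 2$ throughout $[3,5)$ and $g(K) \le 3$ throughout $[5,7)$.

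The next step is to rule out the torus knots permitted by these genus bounds---the trefoils, $T_{2,5}$, $T_{2,7}$, $T_{3,4}$, and $T_{3,5}$---using the Seifert fibered structure of their $p/q$-surgeries together with \cite{sz-menagerie}, Moser's classification of torus knot surgeries, and the numerical hypotheses. The assumption $\gcd(p,5)=1$ for odd $p$ in particular prevents $p/q$ from being a lens space slope on $T_{2,5}$. In each surviving case $K$ is a nontrivial hyperbolic fibered strongly quasipositive $L$-space knot of genus $2$ or $3$; the Li--Ye surgery formula \cite{li-ye-surgery} pins down the Alexander polynomial, and the argument of \cite{bhs-cinquefoil} combined with Smith's theorem \cite{smith} and the calculation of $\Is(S^3_0(K),\mu)$ from the proof of Theorem~\ref{thm:main} shows the pseudo-Anosov representative of the monodromy is fixed-point free. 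This yields a doubly periodic symmetry on $K$ with unknotted quotient $A$ and axis $B$, and hence a branched double cover presentation
\[
S^3_{p/q}(K) \cong \dcover\bigl(L, J\bigr),
\]
where $L$ is a specific lens space surgery on $A$ determined by $p/q$ and $J$ is the primitive knot induced by $B$.

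As in Theorem~\ref{thm:main}, the $SU(2)$-abelian hypothesis forces $J$ to be instanton Floer simple and rationally fibered, and the Li--Ye Euler characteristic comparison \cite{li-ye-Euler,li-ye-enhanced-Euler} matches $\chigr(\KHI(L,J))$ with $\chigr(\HFK(L,J))$. Combining this with the standard description of $\chigr(\HFK)$ for Floer simple knots in lens spaces reduces the problem to bounding the rational genus $g(F)$ of a minimal rational Seifert surface for $J$. Showing $g(F)=0$ under each numerical hypothesis of the theorem makes $\dcover(L,J)$ a lens space, and the Wang--Zhou classification \cite{wang-zhou} of doubly periodic knots with lens space surgeries then forces $K$ to be a torus knot, contradicting the hyperbolicity established earlier.

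The main obstacle is the final rational genus estimate. The value of $g(F)$ is pinned down by $p/q$ only after some nontrivial bookkeeping involving both the lens space parameter of $L$ and the description of $\chigr(\HFK(L,J))$; demanding that this yields $g(F)=0$ is precisely what produces the band $23p-9 \le 80q \le 25p+9$ in the interval $[3,4)$. For $p/q\in[4,5)$ the tighter slope interval makes the inequality automatic, while for $p/q\in[5,7)$ with $p$ even one additionally needs a genus-$3$ extension of the Alexander polynomial classification and of the framed instanton computation of $\Is(S^3_0(K),\mu)$, which should be the most delicate new piece of the argument.
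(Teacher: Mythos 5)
Your reduction for the odd-$p$ slopes in $[3,5)$ (instanton L-space, genus 2, fixed-point-free pseudo-Anosov monodromy via Smith, branched double cover over $L(p,2q)$, Floer simplicity of $J$, Euler characteristic comparison) does match the paper's route, but both of your endgames contain genuine gaps. The most serious is your treatment of even $p$ and of the range $[5,7)$: a ``genus-3 extension'' of the main argument is not available. Smith's isomorphism \cite[Corollary~1.8]{smith} applies only to genus-2 mapping tori (the paper remarks on this explicitly), Theorem~\ref{thm:genus-2-lspace} and the doubly periodic structure with a 5-braid axis are genus-2 statements, and Proposition~\ref{prop:surgery-cover} and Theorem~\ref{thm:floer-simple} both require $p$ odd (the involution extends freely over the surgery solid torus only because $p$ is odd, and the odd prime-power hypothesis on $H_1(L)$ enters the nondegeneracy argument). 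The paper instead disposes of all powers of $2$ with a short, separate argument (Theorem~\ref{thm:power-2}): $g(K)\le 3$ pins $\Delta_K$ down to one of three polynomials with $\det(K)\in\{3,5,7\}$, Klassen \cite{klassen} then supplies a non-abelian binary dihedral representation with $\rho(\mu)=j$ and $\rho(\lambda)=1$, and since $4\mid p$ this descends to $\pi_1(S^3_{p/q}(K))$, contradicting $SU(2)$-abelianness. Your proposal has no substitute for this step.

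The second gap is the claim that one shows $g(F)=0$ in each case, with the band $23p-9\le 80q\le 25p+9$ arising from that demand. This is not achievable: by Theorem~\ref{thm:classify-simple-knots}, $J$ has the same genus and Alexander polynomial as the simple knot $S=S(p,2q,10q)$, and for slopes other than $3$ this simple knot typically has strictly positive genus, namely $g(S)=\big|\,|4p-10\min\{2q,p-2q\}|-2\,\big|$ (Proposition~\ref{prop:genus-SF}, proved by identifying $S$ with a $(5,d)$-curve on the Heegaard torus so that $\Delta_S=\Delta_{T_{5,d}}$). The paper's contradictions come from two different mechanisms: for $p/q\in(4,5)$, $d$ is even so $|\Delta_S(-1)|=5$, giving $|H_1(\dcover(L,J))|=5p\neq p$; for $p/q\in[3,4)$, the stated band is precisely the condition $g(S)\le\frac{p+1}{4}$, which via Baker and Hedden forces $J\cong S$, whose branched double cover is Seifert fibered, contradicting Proposition~\ref{prop:surgery-cover}. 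In particular your assertion that the relevant inequality is ``automatic'' on $[4,5)$ is false (e.g.\ $p/q=13/3$ gives $g(S)=6>\frac{p+1}{4}$ by \eqref{eq:simple-genus}), which is exactly why the determinant/$H_1$-order argument, absent from your proposal, is needed there.
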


The inequality in Theorem \ref{thm:other-small-surgeries} is satisfied for $p/q\in [16/5, 80/23)\subset [3,4)$. It is  satisfied for $p/q = 3$ as well, 
so this theorem recovers Theorem~\ref{thm:main}. Note that Theorem \ref{thm:other-small-surgeries}  also applies to the slopes $4,$ $7/2$, and $9/2$. Together with the results mentioned above, concerning the intervals $[0,2]$ and $(2,3)$, this proves that:

\begin{corollary}
If $K\subset S^3$ is a nontrivial knot then $S^3_r(K)$ is not $SU(2)$-abelian for any integer or half-integer $r<5$.
\end{corollary}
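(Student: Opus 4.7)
The plan is to handle the corollary slope by slope, combining Theorem \ref{thm:main} and Theorem \ref{thm:other-small-surgeries} with the results of Kronheimer--Mrowka and Baldwin--Sivek cited in the introduction. Since reversing the orientation of $S^3$ gives a homeomorphism $S^3_r(K) \cong -S^3_{-r}(\mirror{K})$ and $\pi_1$ is orientation-insensitive, negative slopes on $K$ reduce to positive slopes on the mirror $\mirror{K}$; it therefore suffices to treat the non-negative integer and half-integer slopes below $5$, namely
\[ \{0,\,\tfrac{1}{2},\,1,\,\tfrac{3}{2},\,2,\,\tfrac{5}{2},\,3,\,\tfrac{7}{2},\,4,\,\tfrac{9}{2}\}. \]

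These ten slopes fall into four groups. The five slopes in $[0,2]$ are handled directly by Kronheimer--Mrowka \cite{km-su2}. The slope $r = 5/2$ has prime numerator, so Baldwin--Sivek's result \cite{bs-lspace} for the dense prime-power-numerator subset of $(2,3)$ applies. The slope $r = 3$ is exactly Theorem \ref{thm:main}. The remaining slopes $\tfrac{7}{2}$, $4$, $\tfrac{9}{2}$ each fit one of the three cases of Theorem \ref{thm:other-small-surgeries}: for $r = 4$, the numerator $p = 4$ is even; for $r = 9/2$, the numerator $p = 9 = 3^2$ is an odd prime power with $\gcd(p,5) = 1$ and $p/q \in [4,5)$; for $r = 7/2$, the numerator $p = 7$ is an odd prime coprime to $5$ with $p/q \in [3,4)$, and the required inequality $23p - 9 \leq 80q \leq 25p + 9$ reduces to $152 \leq 160 \leq 184$, which holds.

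There is no substantive obstacle here; the only genuine \emph{step} is verifying the arithmetic inequality in the case $r = 7/2$, which is routine. Everything else amounts to unpacking the hypotheses of the quoted theorems for each slope in turn. The sharp cutoff at $r = 5$ reflects the fact that $5$-surgery on the right-handed trefoil is a lens space, and hence $SU(2)$-abelian, so no analogous statement can hold with strict inequality relaxed to include $r=5$.
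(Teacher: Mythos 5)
Your argument is essentially the paper's own: the corollary is obtained by assembling Kronheimer--Mrowka for the slopes in $[0,2]$, Baldwin--Sivek for $5/2$ (prime numerator in $(2,3)$), Theorem \ref{thm:main} for $3$, and the three cases of Theorem \ref{thm:other-small-surgeries} for $7/2$, $4$, and $9/2$; your arithmetic check $152 \leq 160 \leq 184$ for $r=7/2$ is the only computation involved, and it is correct. The one caveat concerns your mirroring step: the homeomorphism $S^3_r(K) \cong -S^3_{-r}(\overline{K})$ only converts a negative slope $r$ into a positive slope below $5$ when $r > -5$, so it does not reduce \emph{all} negative integer and half-integer slopes to the ten you list; for $r \leq -5$ nothing is gained, and indeed the literal statement fails there (e.g.\ $(-5)$-surgery on the left-handed trefoil is a lens space, as your own closing remark about $T_{2,5}$-type examples suggests). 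So the corollary must be read, as the paper implicitly does, for slopes in $[0,5)$ (equivalently $|r|<5$ after mirroring), and with that reading your proof is complete and coincides with the paper's.
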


%\begin{proof}
%Suppose instead that $S^3_3(K)$ is $SU(2)$-abelian.  Since $K$ is nontrivial, Proposition~\ref{prop:su2-abelian-bdc} says that we can write this 3-surgery as a branched double cover
%\[ S^3_3(K) \cong \dcover(L(3,2), J), \]
%where $J \subset L(3,2)$ is homologically essential but not isotopic to the core of a genus-1 Heegaard splitting.  We apply Theorem~\ref{thm:floer-simple} to see that $J$ is an \emph{instanton Floer simple} knot, meaning that $\dim \KHI(L(3,2),J) = |H_1(L(3,2))| = 3$.  But according to Theorem~\ref{thm:classify-simple-knots}, the only instanton Floer simple knots in $L(3,2)$ are the unknot and the cores of a genus-1 Heegaard splitting, so we have a contradiction.
%\end{proof}

%Steps:
%\begin{enumerate}
%\item Check that a genus-2 L-space knot $K \not\cong T_{2,5}$ is doubly periodic (Theorem~\ref{thm:genus-2-lspace}).
%\item Apply the Montesinos trick: if $K$ is an L-space knot then $S^3_3(K)$ is the branched double cover of a knot in a lens space (Proposition~\ref{prop:su2-abelian-bdc}).
%\item That knot is actually instanton Floer simple (Theorem~\ref{thm:floer-simple}).
%\item Classify instanton Floer simple knots in lens spaces of order 3.
%\end{enumerate}

%\subsection{Other  small surgeries} We ultimately  expect that if $K\subset S^3$ is a nontrivial knot then $S^3_r(K)$ is not $SU(2)$-abelian for any  rational number $r\in [0,5]$.   Kronheimer  and Mrowka
%handled the inteval $[0,2]$ in \cite{km-su2}, as discussed. In \cite{bs-lspace}, Baldwin and Sivek proved this for a dense set of slopes in $(2,3)$ of the form $r=p/q$, where 

\subsection{Organization} In \S\ref{sec:hyperbolic}, we prove that if $K\not\cong T_{2,5}$ is an instanton L-space knot of genus 2, then $K$ is doubly-periodic with unknotted quotient. We then use this in \S\ref{sec:3-surgery} to show that if $3$-surgery on a nontrivial knot  is $SU(2)$-abelian, then this surgery is the branched double cover of a   knot $J\subset L(3,2)$; in fact, we prove something  more general (Proposition \ref{prop:surgery-cover}). In \S\ref{sec:floer-simple}, we prove  that such $J$ are instanton Floer simple (Theorem \ref{thm:floer-simple}). In \S\ref{sec:l32}, we  combine these results  with an understanding of  instanton Floer simple knots in lens spaces (Theorem \ref{thm:classify-simple-knots}) to prove Theorem \ref{thm:main}. Finally, in \S\ref{sec:other}, we  extend  our main theorem to other surgery slopes in $[3,5)$, proving Theorem \ref{thm:other-small-surgeries}.

\subsection{Acknowledgements} We thank Ying Hu for helping to pioneer   the ideas in \cite{bhs-cinquefoil} which inspired many of the proofs in this paper. We also thank Ken Baker, Yi Ni, and Jake Rasmussen for helpful discussions. Finally, thanks to Peter Kronheimer and Tom Mrowka for posing the interesting question at the heart of this work, and for their encouragement.

\section{Genus two L-space knots} \label{sec:hyperbolic}

Recall that a rational homology 3-sphere $Y$ is  an \emph{instanton L-space} if its framed instanton homology satisfies
\[ \dim_\C I^\#(Y) = |H_1(Y)|. \]
A knot $K\subset S^3$ is   said to be an \emph{instanton L-space knot} if $S^3_r(K)$ is an instanton L-space for some rational number $r>0$. Such knots were studied extensively in \cite{bs-lspace}; in particular, it was shown there \cite[Theorem 1.15]{bs-lspace} that  instanton L-space knots are fibered and strongly quasipositive. %Baldwin and Sivek proved the following in \cite[Theorem 1.15]{bs-lspace}:

%\begin{theorem}\label{thm:sqp-fibered} If $K\subset S^3$ is a nontrivial instanton L-space knot then:
%\begin{itemize}
%\item $K$ is fibered and strongly quasipositive, and 
%\item  $S^3_r(K)$ is an instanton L-space for a rational number $r$ if and only if $r\geq 2g(K)-1$.
%\end{itemize}
%\end{theorem}
Our main goal in this section is to prove the following: %, and we  will refer to specific parts of \cite{bhs-cinquefoil} for steps whose proofs are the same in either setting.

\begin{theorem} \label{thm:genus-2-lspace}
Let $K \subset S^3$ be an instanton L-space knot of  genus 2.  Then $K$ is fibered and strongly quasipositive, with Alexander polynomial
\[ \Delta_K(t) = t^2-t+1-t^{-1}+t^{-2}. \]
If $K \not\cong T_{2,5}$, then there exists a pseudo-Anosov 5-braid $\beta$ whose closure $B=\hat\beta$ is an unknot with braid axis $A$, such that $K$ is the lift of $A$ in the branched double cover
\[ \dcover(S^3,B) \cong S^3. \]
In particular, $K$ is a doubly periodic knot with unknotted quotient $A$ and axis $B$.
\end{theorem}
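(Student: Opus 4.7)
The plan is to follow the strategy sketched in the introduction, which adapts techniques from Baldwin--Hu--Sivek's cinquefoil-detection paper \cite{bhs-cinquefoil}. For the unconditional part of the statement, invoke \cite{bs-lspace} to conclude that every instanton L-space knot is fibered and strongly quasipositive; combined with the genus hypothesis, this produces a genus-two open book $(S,h)$ for $S^3\setminus K$. Then apply the recent computation of Li--Ye \cite{li-ye-surgery} on Alexander polynomials of instanton L-space knots to pin down $\Delta_K(t) = t^2 - t + 1 - t^{-1} + t^{-2}$ in the genus-two case.

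Assuming henceforth that $K\not\cong T_{2,5}$, the next task is to argue that $K$ is hyperbolic. Since $K$ is a genus-two fibered knot with the cinquefoil Alexander polynomial, the same style of case analysis carried out in \cite{bhs-cinquefoil} for $T_{2,5}$-detection should rule out the Seifert-fibered and toroidal alternatives for the exterior (in particular ruling out satellite constructions via the uniqueness of fibered companions with prescribed Alexander factorizations), forcing a hyperbolic structure. The Nielsen--Thurston theorem then produces a pseudo-Anosov homeomorphism $\psi\colon S\to S$ freely isotopic to the monodromy $h$.

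The principal step, and I expect the main technical obstacle, is to show that $\psi$ has no fixed points on $S$. The plan is to invoke Smith's theorem \cite{smith}, which identifies (a version of) the instanton Floer homology of the mapping torus of $\psi$ with the symplectic Floer cohomology of $\psi$, and to pair this with an independent calculation of $\Is(S^3_0(K),\mu)$ obtained from surgery exact triangles applied to the fibered L-space knot $K$ and the knowledge of $\Is(S^3_n(K))$ for large $n$. A Lefschetz-style fixed-point bound then constrains the fixed points of $\psi$ in terms of the dimension of the relevant symplectic invariant, and a careful grading and component analysis should show that the bound forces the fixed point set to be empty. The delicate point is to match the instanton side of Smith's identification with exactly the correct summand of the symplectic invariant, and to rule out the possibility that cancellations in symplectic Floer cohomology conceal fixed points of $\psi$.

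Finally, once $\psi$ is known to be fixed-point-free on the genus-two surface $S$, I would apply the branched-cover construction developed in \cite{bhs-cinquefoil}: the hyperelliptic involution on $S$ descends $\psi$ to a pseudo-Anosov $5$-braid $\beta$ on a disk, and the fixed-point-free condition, together with the cinquefoil Alexander polynomial, forces the closure $B=\hat\beta$ to be an unknot with braid axis $A$. The identification $\dcover(S^3,B)\cong S^3$ then lifts $A$ to a knot which one checks must be $K$ up to isotopy, exhibiting $K$ as doubly periodic with unknotted quotient $A$ and axis $B$, as required.
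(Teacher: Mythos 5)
Your outline follows the same route as the paper (fiberedness and strong quasipositivity from \cite{bs-lspace}, the Alexander polynomial from \cite{li-ye-surgery} together with genus and fiberedness detection in $\KHI$, hyperbolicity, Smith's theorem, then the braid-quotient construction of \cite{bhs-cinquefoil}), but the decisive step---showing that the pseudo-Anosov representative has no fixed points---is left unresolved, and the mechanism you propose would not close it. A ``Lefschetz-style fixed-point bound'' cannot work: such counts are signed, and a small symplectic Floer group does not by itself bound the number of fixed points of $\psi$, precisely because of the cancellation issue you flag at the end of your third paragraph. The missing ingredient is Cotton-Clay's computation \cite{cotton-clay}: every pseudo-Anosov map $\hat\psi$ of a closed surface has a canonical smooth representative whose symplectic Floer complex is freely generated by its fixed points with \emph{vanishing differential}, and every fixed point of $\hat\psi$ is a fixed point of that representative. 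With this, $\dim_\C \mathit{HF}^*_{\mathrm{symp}}(\hat\psi)=1$ immediately forces $\hat\psi$ to have at most one fixed point. You also need the bookkeeping that produces both that dimension and a distinguished fixed point: since $K$ is strongly quasipositive, the stable foliation of $\psi$ has $n\geq 2$ boundary prongs \cite[Lemma~3.3]{bhs-cinquefoil}, so $\psi$ extends to a pseudo-Anosov $\hat\psi$ of the capped-off genus-2 surface with a fixed point $p$ in the capping disk, and the mapping torus of $\hat\psi$ is $S^3_0(K)$ with $p$ sweeping out $\mu$. On the instanton side, one shows $\nu^\#(K)=r_0(K)=3$ (because $3$-surgery on $K$ is an instanton L-space while $1$- and $2$-surgery are not), hence $\dim_\C I^\#(S^3_0(K),\mu)=6$ by \cite{bs-concordance}; Fukaya's connected-sum theorem as formulated by Scaduto \cite{scaduto} converts this to $\dim_\C I_*(S^3_0(K))_w=6$, and the version of instanton homology appearing in Smith's isomorphism \cite{smith} is the quotient of $I_*$ by a degree-4 involution, so it has dimension $3$; Smith's $\C\oplus \mathit{HF}^*_{\mathrm{symp}}(\hat\psi)\oplus\C$ decomposition then yields $\dim_\C \mathit{HF}^*_{\mathrm{symp}}(\hat\psi)=1$. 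Combining with Cotton-Clay, $p$ is the \emph{only} fixed point of $\hat\psi$, so $\psi$ itself is fixed-point free. Your proposal gestures at both delicate points (matching the instanton flavors, and ruling out cancellation) but supplies neither the connected-sum/gauge-group comparison nor the no-cancellation theorem, and without the latter the argument genuinely fails.

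Two smaller corrections to the surrounding steps. Hyperbolicity does not require a fresh Seifert-fibered/toroidal case analysis: \cite[Lemma~3.2]{bhs-cinquefoil} already says that a genus-2 fibered knot with this Alexander polynomial is $T_{\pm 2,5}$ or hyperbolic, and $T_{-2,5}$ is excluded because it is not strongly quasipositive. In the final step, the unknottedness of $B=\hat\beta$ is not forced by ``the fixed-point-free condition together with the Alexander polynomial''; in the argument of \cite[Theorem~3.1]{bhs-cinquefoil} (which the paper invokes verbatim, using only the conclusions of the fixed-point-freeness proposition) it comes from the fact that the double cover of $S^3$ branched over $B$ is the ambient $S^3$ containing $K$.
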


A Heegaard Floer analogue of this result was proved in \cite[\S3]{bhs-cinquefoil}. Our proof of Theorem \ref{thm:genus-2-lspace}  will largely follow the arguments there, and we  will refer to specific parts of \cite{bhs-cinquefoil} for steps whose proofs are the same in either setting.
We begin with some preliminary results.
%Throughout the proof of Theorem~\ref{thm:genus-2-lspace} we will refer to specific parts of \cite{bhs-cinquefoil} for steps whose proofs are the same in either setting.

\begin{lemma} \label{lem:i-sharp-basics}
Let $K$ be an instanton L-space knot of  genus 2.  Then $K$ is fibered and strongly quasipositive, with Alexander polynomial
\[ \Delta_K(t) = t^2 - t + 1 - t^{-1} + t^{-2}, \]
and either $K \cong T_{2,5}$ or $K$ is hyperbolic.
\end{lemma}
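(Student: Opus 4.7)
The plan is to dispatch each clause of the lemma in sequence, closely paralleling the Heegaard Floer analogue in \cite[Section~3]{bhs-cinquefoil} and using the instanton-side inputs already cited in the introduction. The assertions that $K$ is fibered and strongly quasipositive are not special to genus~$2$: they are exactly \cite[Theorem 1.15]{bs-lspace} applied to $K$, which holds for every instanton L-space knot regardless of genus.

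For the Alexander polynomial, I would invoke the instanton L-space surgery work of Li--Ye \cite{li-ye-surgery}, which forces $\Delta_K$ to take the alternating $\pm 1$ form characteristic of Heegaard Floer L-space knot polynomials. Combined with $g(K)=2$, palindromicity, and $\Delta_K(1)=1$, the only candidates are $t^{2}-t+1-t^{-1}+t^{-2}$ and $t^{2}-1+t^{-2}$; I would rule out the latter either by a direct rank count on instanton knot homology afforded by the Li--Ye machinery, or by transferring the corresponding Heegaard Floer obstruction across the Alexander polynomial identification.

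For the geometric type I would invoke Thurston's geometrization, noting first that primeness of $K$ follows from the L-space property since connected sums are not L-space knots. Genus-two torus knots satisfy $(p-1)(q-1)/2=2$, forcing $\{p,q\}=\{2,5\}$, so the torus case yields exactly $T_{2,5}$. To exclude a nontrivial satellite structure on $K\not\cong T_{2,5}$, I would write $\Delta_K(t)=t^{-2}\Phi_{10}(t)$ with $\Phi_{10}$ irreducible over $\Q$ and invoke the satellite formula $\Delta_K(t)=\Delta_P(t)\Delta_C(t^w)$: for $w\geq 2$ the substitution $t\mapsto t^w$ would restrict $\Delta_C(t^w)$ to powers of $t^w$, inconsistent with the nonzero odd-degree coefficients of $\Phi_{10}$, while the winding-number-one case is excluded via the L-space-satellite-knot structure theorem, which would force the companion to be a nontrivial L-space knot whose Alexander polynomial divides $\Phi_{10}$, an impossibility. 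Thus $K$ is hyperbolic.

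The main obstacle, I expect, is making the $w=1$ satellite exclusion airtight in the instanton setting: the relevant Hedden/Hom structure results for L-space satellites are natively Heegaard-Floer, and although \cite{li-ye-surgery} handles the polynomial comparison, importing the structural content cleanly may require an additional appeal to the satellite-compatibility arguments of \cite{bs-lspace} rather than a purely polynomial argument.
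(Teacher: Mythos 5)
Your first step (fibered and strongly quasipositive via \cite[Theorem~1.15]{bs-lspace}) and your reduction of $\Delta_K(t)$ to the two candidates $t^2-t+1-t^{-1}+t^{-2}$ and $t^2-1+t^{-2}$ match the paper, but the elimination of $t^2-1+t^{-2}$ is where you have a real gap. You offer either an unspecified ``direct rank count'' or a transfer of the Heegaard Floer obstruction; the latter is not legitimate, because the Heegaard Floer statement concerns Heegaard Floer L-space knots, and you only know that $K$ is an \emph{instanton} L-space knot. The missing input is concrete: \cite[Theorem~1.9]{li-ye-surgery} gives $\dim_\C\KHI(K,i)\in\{0,1\}$ with symmetric support containing $0$; genus detection \cite[Proposition~7.16]{km-excision} gives $\KHI(K,2)\neq 0$; and the fiberedness-detection result \cite[Theorem~1.7]{bs-trefoil} gives $\KHI(K,1)\neq 0$, which is exactly what kills the support $\{2,0,-2\}$ and hence the polynomial $t^2-1+t^{-2}$. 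Without naming that last ingredient your argument does not close. A further omission: the lemma asserts $K\cong T_{2,5}$, so you must also exclude $T_{-2,5}$, which has the same genus and Alexander polynomial; the paper does this using strong quasipositivity.

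For hyperbolicity, the paper simply cites \cite[Lemma~3.2]{bhs-cinquefoil}: a fibered genus-2 knot with this Alexander polynomial is $T_{\pm 2,5}$ or hyperbolic, proved using only that the polynomial is irreducible and is not a nontrivial Laurent polynomial in $t^w$ for $w\geq 2$, together with how genus and fiberedness behave under satellite operations. No L-space satellite structure theory is needed, so the obstacle you flag simply does not arise in the paper's route. Moreover, your proposed $w=1$ exclusion is flawed even on its own terms: the claim that a nontrivial companion whose Alexander polynomial divides $\Delta_K$ is ``an impossibility'' is false, since the companion could be $T_{2,5}$ itself (or any genus-2 fibered knot with this polynomial). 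The correct mechanism in the $w=1$ case is that irreducibility forces the pattern to have trivial Alexander polynomial, and then fiberedness and the additivity of fiber genus for fibered satellites force the pattern's fiber to be an annulus, so the pattern is the core and the satellite structure is not genuine. As written, your geometric-type step would fail; either cite \cite[Lemma~3.2]{bhs-cinquefoil} as the paper does, or carry out this fibered-satellite argument, and do not rely on Hom/Hedden-type L-space satellite theorems, which have no instanton analogue in the literature (nor in \cite{bs-lspace}).
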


\begin{proof}
The claims that $K$ is fibered and strongly quasipositive are \cite[Theorem~1.15]{bs-lspace}, as discussed above.  The Alexander polynomial $\Delta_K(t)$ is determined by \cite[Theorem~1.9]{li-ye-surgery}, which says that there are integers $k \geq 0$ and
\[ n_k > n_{k-1} > \dots > n_0=0 > n_{-1} > \dots > n_{-k}, \]
with $n_{-j} = -n_j$ for all $j$, such that the instanton knot homology $\KHI(K)$ satisfies
\[ \dim_\C \KHI(K,i) = \begin{cases} 1, & i = n_j \text{ for some j} \\ 0, & \text{otherwise}, \end{cases} \]
with the nonzero summands $\KHI(K,n_j)$ supported in alternating $\Z/2\Z$ gradings.  We know that $\KHI(K,2) \neq 0$ since $K$ has genus 2 \cite[Proposition~7.16]{km-excision}, and then $\KHI(K,1) \neq 0$ since $K$ is fibered \cite[Theorem~1.7]{bs-trefoil}, so $k=2$ and $(n_2,n_1,n_0)=(2,1,0)$.  Then $\KHI(K)$ has graded Euler characteristic
\[ \sum_{j\in\Z} \chi(\KHI(K,j)) \cdot t^j = \pm \Delta_K(t) \]
\cite{km-alexander,lim}, so the normalization $\Delta_K(1)=1$ forces $\Delta_K(t)$ to be as claimed.  Since $K$ is fibered with this particular Alexander polynomial, it is either $T_{\pm2,5}$ or hyperbolic by \cite[Lemma~3.2]{bhs-cinquefoil}. The case  of $T_{-2,5}$ is ruled out as this knot is not strongly quasipositive.\end{proof}

\begin{lemma} \label{lem:i-zero-surgery}
Let $K$ be an instanton L-space knot of genus 2, and let $w \to S^3_0(K)$ be a Hermitian line bundle such that $c_1(w)$ is Poincar\'e dual to the meridian $\mu$ of $K$.  Then
\[ \dim_\C I_*(S^3_0(K))_w = 6, \] 
where $I_*$ refers here to the version of instanton  homology for admissible bundles described, for example, in \cite[\S5.6]{donaldson-book}.
\end{lemma}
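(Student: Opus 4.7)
The plan is to compute $\dim_\C I_*(S^3_0(K))_w$ by leveraging the rigid structure of $K$ as an instanton L-space knot of genus~$2$, and translating the admissible Floer group into a framed instanton invariant to which Baldwin--Sivek's surgery machinery can be applied.

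By Lemma~\ref{lem:i-sharp-basics}, $K$ is fibered of genus~$2$ with Alexander polynomial $\Delta_K(t) = t^2 - t + 1 - t^{-1} + t^{-2}$, and $\KHI(K)$ is $\C$ in each Alexander grading $i\in\{-2,-1,0,1,2\}$, so $\dim_\C \KHI(K)=5$. The target value $6$ is consistent with a formula of the shape $\dim_\C I_*(S^3_0(K))_w = \dim_\C \KHI(K) + 1 = 2g + 2$ for an instanton L-space knot of genus $g$.

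Since $c_1(w)$ is Poincar\'e dual to $\mu$, the first step is to identify $I_*(S^3_0(K))_w$ with the framed instanton homology $\Is(S^3_0(K),\mu)$ defined using a basepoint arc along the meridian; this is a standard equivalence between Donaldson's admissible Floer homology and framed instanton homology with meridional basepoint data, and it moves the problem into the framework in which Baldwin--Sivek's machinery on L-space knots applies. The second step is to apply the surgery exact triangle for $\Is$ relating $\Is(S^3_0(K),\mu)$ to $\Is(S^3_n(K))$ and $\Is(S^3_{n+1}(K))$ for an appropriate integer $n \geq 3$. Since $K$ is an instanton L-space knot of genus~$2$, $S^3_n(K)$ is an instanton L-space for every $n \geq 2g - 1 = 3$, so $\dim_\C \Is(S^3_n(K)) = n$, and both input terms are fully determined. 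An alternative, and perhaps more natural, route in the admissible formulation is to decompose $I_*(S^3_0(K))_w$ into eigenspaces for the operator $\mu(\hat\Sigma)$ associated with the capped-off genus-$2$ Seifert surface $\hat\Sigma$; for $c_1(w)\cdot\hat\Sigma$ odd, these eigenvalues lie in $\{-2,0,2\}$, and each eigenspace can be computed from $\KHI(K,\cdot)$ to give total dimension~$6$.

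The main obstacle is controlling cancellation in the long exact sequence, or equivalently pinning down each eigenspace precisely, since exactness alone only gives an upper bound $\dim I_*(S^3_0(K))_w \leq \dim \Is(S^3_n(K)) + \dim \Is(S^3_{n+1}(K))$. Getting the correct value $6$ rather than a smaller number requires the L-space property to force the boundary maps in the triangle into a specific form, and this is where the particular Alexander polynomial and the $\KHI$ grading constraints from Lemma~\ref{lem:i-sharp-basics} become essential; in the eigenspace picture, it is the contribution from $\KHI(K,\pm 2)$ to the extremal eigenvalues that produces the correction beyond the naive count $\dim \KHI(K) = 5$.
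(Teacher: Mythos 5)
Your reduction of $I_*(S^3_0(K))_w$ to the framed group $\Is(S^3_0(K),\mu)$ is the right move, and it is the second half of the paper's argument --- though it is not a ``standard equivalence'': it is justified there by Fukaya's connected sum theorem in Scaduto's formulation, using crucially that $H_2(S^3_0(K))$ is generated by a \emph{genus-2} surface dual to $\mu$, which gives $\Is(S^3_0(K),\mu)\otimes H_*(S^4)\cong I_*(S^3_0(K))_w\otimes H_*(S^3)$ and hence equality of dimensions. The genuine gap is in your main computation. There is no surgery exact triangle relating $\Is(S^3_0(K),\mu)$, $\Is(S^3_n(K))$, and $\Is(S^3_{n+1}(K))$ for $n\geq 3$: exact triangles require the three filling slopes to be pairwise distance one, and $0/1$ has distance $n$ from $n/1$. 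The only integral triangle containing the $0$-slope is $(\infty,0,1)$, and its third term $S^3_1(K)$ is \emph{not} an instanton L-space (its dimension is $5$, not $1$), so your claim that ``both input terms are fully determined'' fails, and the cancellation problem you flag at the end is exactly the part you have not solved. Your alternative route is also not available as stated: the eigenspaces of $\mu(\hat\Sigma)$ on $I_*(S^3_0(K))_w$ cannot be ``computed from $\KHI(K,\cdot)$'' by any cited result --- fiberedness pins down only the extremal eigenspaces (dimension $1$), and Lim's theorem gives only Euler characteristics, so the middle eigenspace, which carries the discrepancy between $5=\dim\KHI(K)$ and $6$, is left undetermined.

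The paper's actual mechanism is different and complete: since $K$ is a genus-2 instanton L-space knot, $S^3_3(K)$ is an instanton L-space while $S^3_1(K)$ and $S^3_2(K)$ are not (Propositions 7.11 and 7.12 of the Baldwin--Sivek L-space paper), which forces $\nu^\#(K)=r_0(K)=3$ in the sense of their concordance paper; their general dimension formula then gives $\dim_\C \Is(S^3_{p/q}(K))=p$ for $p/q\geq 3$ and $6q-p$ for $p/q<3$, and in particular $\dim_\C \Is(S^3_0(K),\mu)=6$ directly, with no exact-triangle cancellation to control. If you want to repair your argument, you should replace the nonexistent triangle by this $\nu^\#$/$r_0$ formalism (or prove an equivalent statement about the maps in the $(\infty,0,1)$ and $(\infty,n,n+1)$ triangles), and keep the Fukaya--Scaduto step, stated with its genus-2 hypothesis, to pass from $\Is(S^3_0(K),\mu)$ to $I_*(S^3_0(K))_w$.
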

%\todo{SS: we haven't introduced the original $I_*$ for admissible bundles, have we?  We should at least say what it is, and maybe point to \S5.6 of Donaldson's book. JAB: how's this?}

\begin{proof}
We know that $3$-surgery on $K$ is an instanton L-space \cite[Proposition~7.11]{bs-lspace}, while neither $1$- nor $2$-surgery on $K$ is an instanton L-space \cite[Proposition~7.12]{bs-lspace}.  It follows in the language of \cite{bs-concordance} that $\cinvt(K) = r_0(K) = 3$, meaning that
\[ \dim_\C I^\#(S^3_{p/q}(K)) = \begin{cases} p, & p/q \geq 3 \\ 6q-p, & p/q < 3 \end{cases} \]
for all coprime integers $p,q$ with $q \geq 1$.  In particular we have
\[ \dim_\C I^\#(S^3_0(K),\mu) = 6 \]
by \cite[Proposition~3.3]{bs-concordance}.

To prove the lemma, we then  relate   $I_*(S^3_0(K))_w$ to $I^\#(S^3_0(K),\mu)$ using Fukaya's connected sum theorem for instanton homology \cite[Theorem~1.3]{scaduto}.  To be precise, since $H_2(S^3_0(K))$ is generated by a closed surface of genus 2 which is dual to $\mu$, the special case of this theorem discussed in \cite[\S9.8]{scaduto} says that
\[ I^\#(S^3_0(K),\mu) \otimes H_*(S^4) \cong I_*(S^3_0(K))_w \otimes H_*(S^3) \]
as relatively $\Z/4\Z$-graded vector spaces over any field of characteristic zero.  It follows that $\dim_\C I_*(S^3_0(K))_w = 6$ as claimed.
\end{proof}

Recall for the proposition below that instanton L-space knots are fibered.

\begin{proposition} \label{prop:l-space-monodromy}
Let $K\not\cong T_{2,5}$ be an instanton L-space knot of genus 2. Let   $(S,h)$ be an    open book encoding the fibration of its complement. Then the monodromy $h$ is freely isotopic to a pseudo-Anosov homeomorphism \[ \psi: S \to S \] with no fixed points.
\end{proposition}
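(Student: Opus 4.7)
The plan has two parts. The pseudo-Anosov claim is essentially immediate: Lemma~\ref{lem:i-sharp-basics}, together with the assumption $K\not\cong T_{2,5}$, forces $K$ to be hyperbolic; by Thurston's classification of surface mapping classes applied to $h$ on the once-punctured genus-$2$ surface $S$, there is then a pseudo-Anosov representative $\psi:S\to S$ in the free isotopy class of $h$. The substance of the proposition is therefore to show that $\psi$ has no fixed points.

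For this, I would cap off $S$ with a disk to form the closed genus-$2$ surface $\hat\Sigma$ and extend $\psi$ to a homeomorphism $\hat\psi:\hat\Sigma\to\hat\Sigma$ whose mapping torus, equipped with the admissible line bundle $w$ dual to $\mu$, is exactly the pair featured in Lemma~\ref{lem:i-zero-surgery}. Smith's theorem \cite{smith} then identifies $I_*(S^3_0(K))_w$ with a version of the symplectic Floer cohomology of the diffeomorphism $\hat\psi$ on $\hat\Sigma$, so that Lemma~\ref{lem:i-zero-surgery} pins down
\[
\dim_{\C} HF^*_{\mathrm{symp}}(\hat\psi) \;=\; 6.
\]

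The next step is to compare $6$ against a Cotton-Clay / Seidel--Gautschi-style formula expressing $\dim HF^*_{\mathrm{symp}}$ of a pseudo-Anosov as a sum of contributions from Nielsen classes of fixed points. Each interior non-degenerate fixed point of $\hat\psi$ on $\hat\Sigma$ contributes at least one generator to $HF^*_{\mathrm{symp}}(\hat\psi)$, while the singular fixed point at the centre of the capping disk contributes a precise number of generators determined by the prong structure of the invariant foliations at $\partial S$; that prong data is in turn pinned down by the action of $h$ on $H_1(S)$ (hence by $\Delta_K(t)=t^2-t+1-t^{-1}+t^{-2}$) and by the Lefschetz number $L(\hat\psi)=1$. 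Adapting the Heegaard Floer template of \cite[\S3]{bhs-cinquefoil}, the expected outcome is that the capping-disk fixed point already accounts for all $6$ generators, so that any additional fixed point of $\psi$ in the interior of $S$ would push $\dim HF^*_{\mathrm{symp}}(\hat\psi)$ strictly above $6$, a contradiction.

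The main obstacle is this final step: computing the exact contribution of the cap fixed point (and of any singular fixed points of the stable/unstable foliations) under Smith's identification, and verifying that it equals $6$ on the nose. This requires analysing the normal form of $\hat\psi$ in a neighbourhood of the capping disk through the fractional Dehn twist coefficient of $\psi$ along $\partial S$ (which is constrained by the Alexander polynomial), and transporting the Heegaard Floer fixed-point arguments of \cite{bhs-cinquefoil} across Smith's instanton--symplectic correspondence.
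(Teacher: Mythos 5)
There is a genuine gap, and it is quantitative: you have misread what Smith's theorem identifies. Smith's result \cite[Corollary~1.8]{smith} does not identify $I_*(S^3_0(K))_w$ with the symplectic Floer cohomology of $\hat\psi$; it gives an isomorphism
\[ \hf_{\mathrm{inst}}(E \to Y_{\hat\psi}) \cong \C \oplus \hf_{\mathrm{symp}}^\ast(\hat\psi) \oplus \C, \]
where $\hf_{\mathrm{inst}}$ is the Dostoglou--Salamon flavor of instanton homology, defined with a larger gauge group than the determinant-$1$ gauge group computing $I_*$; it is the quotient of the relatively $\Z/8\Z$-graded $I_*$ by a degree-$4$ involution and therefore has \emph{half} the dimension of $I_*(S^3_0(K))_w$. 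With Lemma~\ref{lem:i-zero-surgery} this gives $\dim_\C \hf_{\mathrm{inst}} = 3$ and hence $\dim_\C \hf_{\mathrm{symp}}^\ast(\hat\psi) = 1$, not $6$. Your plan of matching $6$ against a sum of Nielsen-class contributions, with the capping-disk singularity expected to account for all $6$ generators, starts from a false equality and cannot be repaired as stated; moreover, the delicate local computation you flag as ``the main obstacle'' is exactly what you never carry out, so even internally the argument is incomplete. Once the correct count $\dim_\C \hf_{\mathrm{symp}}^\ast(\hat\psi)=1$ is in hand, the paper finishes much more simply: by Cotton-Clay \cite{cotton-clay}, a pseudo-Anosov $\hat\psi$ has a canonical smooth representative whose Floer complex is freely generated by its fixed points with vanishing differential, and every fixed point of $\hat\psi$ survives as a fixed point of that representative; hence $\hat\psi$ has at most one fixed point, which must be the cap point $p$, so $\psi$ itself has none. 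No analysis of the prong contribution of $p$, the fractional Dehn twist coefficient, or the Lefschetz number is needed.

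Two smaller points. First, before capping off you need to know that the stable foliation of $\psi$ has $n\geq 2$ boundary prongs — this is where strong quasipositivity (Lemma~\ref{lem:i-sharp-basics} together with \cite[Lemma~3.3]{bhs-cinquefoil}) enters — since this is what guarantees that $\psi$ extends to a pseudo-Anosov $\hat\psi$ of the closed surface with the center of the capping disk a fixed point; your proposal uses this extension and the fixedness of the cap point without justification. Second, your opening step (hyperbolicity of $K$ plus Thurston giving the pseudo-Anosov representative) matches the paper and is fine.
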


\begin{proof}

The complement  $S^3\setminus K$ is hyperbolic, by Lemma  \ref{lem:i-sharp-basics}. It follows that the monodromy $h$ is  freely isotopic to  a pseudo-Anosov homeomorphism
\[ \psi: S \to S, \]
by work of Thurston \cite{thurston-fiber}.  Since $K$ is strongly quasipositive, also by Lemma  \ref{lem:i-sharp-basics},  \cite[Lemma~3.3]{bhs-cinquefoil} says that   the stable  foliation of $\psi$ has some number $n \geq 2$ of boundary prongs. 

As discussed in \cite[\S2]{bhs-cinquefoil}, the fact that $n\geq 2$ means that the map $\psi$  extends naturally to a pseudo-Anosov homeomorphism \[\hat\psi:\hat S\to \hat S\] of the closed genus-2 surface obtained from $S$ by capping off its boundary with a disk. The stable foliation of $\psi$ extends across the disk to a stable invariant foliation for $\hat\psi$  in which the $n$ boundary prongs extend to $n$ prongs meeting at a singularity $p$ in the capping disk (except that $p$ is a smooth point if $n=2$). Moreover, $p$ is a fixed point of $\hat\psi$.

Note that the mapping torus $Y_{\hat\psi}$ of $\hat\psi$ is homeomorphic to $0$-surgery on $K$, \[Y_{\hat\psi}\cong S^3_0(K),\] by a map which identifies the suspension of $p$ with the   meridian $\mu$ of $K$ in the $0$-surgery. Smith \cite[Corollary~1.8]{smith} proved in this case (or, more generally, for any genus-2 mapping torus which is a homology $S^1\times S^2$ with a distinguished section coming from a marked point on the surface) that there is an isomorphism (with $\C$ coefficients) of the form
\begin{equation}
\label{eqn:isofixed} \hf_{\mathrm{inst}}(E \to Y_{\hat{\psi}}) \cong \C \oplus \hf_{\mathrm{symp}}^\ast(\hat{\psi}) \oplus \C. \end{equation}
Here, $E \to Y_{\hat{\psi}}$ is the nontrivial $SO(3)$ bundle with $w_2$ dual to $\mu$;    $\hf^\ast_{\mathrm{symp}}(\hat{\psi})$ is the symplectic Floer cohomology of $\hat{\psi}$, defined via any area-preserving diffeomorphism (for any area form on $\hat S$) in the mapping class of $\hat\psi$; and $\hf_{\mathrm{inst}}$ is the version of instanton homology used by Dostoglou and Salamon in \cite{dostoglou-salamon}. The latter is defined using a slightly larger gauge group than the determinant-1 gauge transformations defining $I_*$, and is thus the quotient of the relatively $\Z/8\Z$-graded $I_*$ by a degree-4 involution. It follows, in particular, that \[\dim_\C \hf_{\mathrm{inst}}(E \to Y_{\hat{\psi}}) = \frac{1}{2}\cdot \dim_\C I_*(S^3_0(K))_w,\] where $w \to S^3_0(K)$ is a Hermitian line bundle with $c_1$   dual to  $\mu$, as in Lemma \ref{lem:i-zero-surgery}.
The same lemma says that $\dim_\C I_*(S^3_0(K))_w = 6$, from which it follows that
\[ \dim_\C \hf_{\mathrm{inst}}(E\to Y_{\hat{\psi}}) = 3, \]
and therefore that \begin{equation}
\label{eqn:symp}\dim_\C \hf_{\mathrm{symp}}^\ast(\hat{\psi}) = 1,\end{equation} by the isomorphism \eqref{eqn:isofixed}.

The symplectic Floer cohomology of pseudo-Anosov homeomorphisms was computed by Cotton-Clay in \cite[\S3]{cotton-clay}. In particular, given  $\hat\psi$ as above, Cotton-Clay defines in \cite[\S3.2]{cotton-clay} a canonical smooth representative $\hat\psi_{{sm}}$ of $\hat\psi$ whose symplectic Floer chain complex \[\mathit{CF}_\ast^{\mathrm{symp}}(\hat\psi_{{sm}})\] is freely generated by the fixed points of $\hat\psi_{{sm}}$ and has trivial differential. This implies that  \begin{equation}\label{eqn:symprep} \hf^\ast_{\mathrm{symp}}(\hat{\psi}) \cong \mathit{CF}_\ast^{\mathrm{symp}}(\hat\psi_{{sm}}). \end{equation}
(This is proved in \cite{cotton-clay} with $\Z/2\Z$ coefficients, but also holds in characteristic zero since the differential vanishes for purely topological reasons.) Furthermore, $\hat\psi_{{sm}}$ has the property that each fixed point of $\hat\psi$ is also a fixed point of $\hat\psi_{{sm}}$. The combination of \eqref{eqn:symp} and \eqref{eqn:symprep} therefore implies that $\hat\psi$ has at most one fixed point. On the other hand, we know that $p$ is a fixed point of $\hat\psi$ from the discussion above. It follows that this is the only fixed point of $\hat\psi$, which implies that the original map $\psi$ has no fixed points.
\end{proof}

\begin{remark}
The proof of Proposition~\ref{prop:l-space-monodromy} is notably easier than that of the analogous \cite[Theorem~3.5]{bhs-cinquefoil}, at least modulo deep theorems by others.  The reason is that the work of Lee and Taubes \cite{lee-taubes} used in \cite{bhs-cinquefoil} requires a monotonicity condition which only applies in genus 3 or higher, so a trick is required to pass from the genus 2 case to the higher genus case.  By contrast, the references \cite{dostoglou-salamon,smith} work perfectly well for genus-2 mapping tori, and in fact \cite[Corollary~1.8]{smith} \emph{only} applies in genus 2.
\end{remark}

\begin{remark}\label{rmk:genus-2}
Suppose $K\not\cong T_{2,5}$ is a genus-2 instanton L-space knot,  $(S,h)$ is an open book associated to the fibration of its complement, and $\psi: S \to S$ a pseudo-Anosov map isotopic to $h$.  Proposition~\ref{prop:l-space-monodromy} says that $\psi$ has no fixed points.  Thus we can apply \cite[Proposition~3.8]{bhs-cinquefoil}, whose other hypotheses follow from Lemma~\ref{lem:i-sharp-basics}, to conclude the following:
\begin{itemize}
\item the monodromy $h$ has fractional Dehn twist coefficient $c(h)=\frac{1}{4}$;
\item the stable foliation of $\psi$ has four prongs on $\partial S$, and two interior 3-pronged singularities which are exchanged by $\psi$.
\end{itemize}
We will not need either of these properties in this article, but this may be independently useful towards a characterization of genus-2 instanton L-space knots.
\end{remark}

At this point, the proof of Theorem~\ref{thm:genus-2-lspace} proceeds in exactly the same way as that of its Heegaard Floer analogue \cite[Theorem 3.1]{bhs-cinquefoil}.

\begin{proof}[Proof of Theorem~\ref{thm:genus-2-lspace}]
The first few claims follow immediately from Lemma~\ref{lem:i-sharp-basics}.  To argue that if $K \not\cong T_{2,5}$ then there is  a pseudo-Anosov 5-braid $\beta$ with unknotted closure $B=\hat{\beta}$ such that $K$ is the lift of the braid axis $A$ to the branched double cover of $B$, we simply repeat the proof of \cite[Theorem~3.1]{bhs-cinquefoil} verbatim.  That proof relies only on the conclusions of Proposition~\ref{prop:l-space-monodromy}, so it applies equally well here.
\end{proof}

\section{$SU(2)$-abelian surgeries and branched double covers} \label{sec:3-surgery}

The conditions of being an instanton L-space and being $SU(2)$-abelian are closely related. For surgeries on knots, we have the following:

\begin{lemma} \label{lem:su2-cyclic-lspace}
Let $K \subset S^3$ be a knot, and suppose $p$ and $q$ are positive coprime integers such that $p$ is a prime power. If \[Y=S^3_{p/q}(K)\] is $SU(2)$-abelian, then $Y$  is an instanton L-space.
\end{lemma}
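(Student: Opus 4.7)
The plan is to bracket $\dim_\C I^\#(Y)$ between matching upper and lower bounds equal to $p = |H_1(Y;\Z)|$, where $Y = S^3_{p/q}(K)$ is a rational homology sphere since $p\neq 0$. The lower bound $\dim_\C I^\#(Y) \geq p$ should come from the general identity $|\chi(I^\#(Y))| = |H_1(Y;\Z)|$ for rational homology spheres, so the entire content of the lemma lies in producing a matching upper bound under the $SU(2)$-abelian hypothesis.

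For the upper bound, I would use the Morse-theoretic description of $I^\#(Y)$, in which the chain-level generators (after a small generic holonomy perturbation of the Chern--Simons functional) correspond to the $SU(2)$ character variety of $\pi_1(Y)$, together with contributions from an auxiliary factor used to resolve reducibles. The $SU(2)$-abelian hypothesis eliminates all irreducible characters, so every generator arises from a homomorphism $\pi_1(Y) \to SU(2)$ factoring through $H_1(Y) \cong \Z/p\Z$. When $p = \ell^k$ is a prime power, the cyclic subgroup lattice of $\Z/p\Z$ is totally ordered, and a direct count of conjugacy classes of such homomorphisms, with the appropriate doubling from $SU(2)$ conjugation, should give exactly $p$ generators.

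The main obstacle will be ensuring that the prime-power hypothesis is precisely what makes the counting match exactly. The key simplification is that every nontrivial abelian character has image of order $\ell^j$ for some $1 \leq j \leq k$, ruling out cross-contributions between characters of coprime orders that would otherwise bifurcate or cancel the expected generators after perturbation. This mirrors the strategy used by Baldwin--Sivek in \cite{bs-stein} to handle the case $p$ prime, and I would proceed by verifying that the perturbation-theoretic analysis there extends to all prime powers using the totally ordered subgroup structure of $\Z/\ell^k\Z$. Once the upper bound is in hand, combining it with the lower bound yields $\dim_\C I^\#(Y) = p$, which is the definition of an instanton L-space.
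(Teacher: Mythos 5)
The paper disposes of this lemma with two citations: Theorem~4.6 of \cite{bs-stein} (an $SU(2)$-abelian rational homology sphere whose fundamental group is \emph{cyclically finite} is an instanton L-space) together with Proposition~4.9 of \cite{bs-stein} (cyclic finiteness holds when $H_1(Y)$ is cyclic of prime power order, which is automatic here since $H_1(S^3_{p/q}(K))\cong\Z/p\Z$). Your proposal instead sketches a from-scratch reproof of those black-boxed results, and it is in that sketch that the gap lies. Your lower bound $\dim_\C I^\#(Y)\geq |H_1(Y)|$ via the Euler characteristic is fine, but the upper bound is where all the content sits, and the step you wave at --- ``a direct count of conjugacy classes of such homomorphisms, with the appropriate doubling, should give exactly $p$ generators'' --- is not justified. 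First, conjugacy classes of homomorphisms $\Z/p\Z\to SU(2)$ number $(p+1)/2$ for $p$ odd, so even the arithmetic of how the reducible locus contributes exactly $|H_1(Y)|$ generators to $I^\#$ is a nontrivial computation, not a naive count. Second, and more seriously, bounding the number of generators by the number of flat connections requires that the abelian flat connections be \emph{nondegenerate} critical points of the Chern--Simons functional; if some abelian representation $\alpha$ is degenerate (equivalently, $H^1(Y;\C_\alpha)\neq 0$, which can happen for a rational homology sphere), a small holonomy perturbation can split it into several generators, possibly including irreducible perturbed critical points, and the upper bound collapses. The $SU(2)$-abelian hypothesis alone does not rule this out.

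This nondegeneracy is exactly what ``cyclically finite'' encodes in \cite{bs-stein}, and it is the true role of the prime power hypothesis: vanishing of the relevant twisted cohomology is equivalent to the finite cyclic covers of $Y$ of degree $p^j$ having $b_1=0$, which one proves by a cyclotomic argument of the same flavor as in the proof of Theorem~\ref{thm:floer-simple} in this paper, using that $\Phi_{p^j}(1)=p$ cannot divide a quantity forced to be $\pm 1$. Your proposed mechanism --- that the totally ordered subgroup lattice of $\Z/p^k\Z$ rules out ``cross-contributions between characters of coprime orders'' --- is not the issue and does not establish nondegeneracy; characters of coprime order are not the obstruction, degenerate abelian characters are. (Also, Proposition~4.9 of \cite{bs-stein} already covers all prime powers, not just primes, so there is nothing to extend there.) To repair the proposal you would either have to prove the nondegeneracy statement directly, or simply cite Theorem~4.6 and Proposition~4.9 of \cite{bs-stein} as the paper does.
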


\begin{proof}
An $SU(2)$-abelian rational homology sphere $Y$ is an instanton L-space whenever $\pi_1(Y)$ is \emph{cyclically finite}, by \cite[Theorem~4.6]{bs-stein}.
This cyclical finiteness is satisfied if, for example, $H_1(Y)$ is cyclic of prime power order, by \cite[Proposition~4.9]{bs-stein}.
\end{proof}

Our main goal is to prove that 3-surgery on a nontrivial knot in $S^3$ is never $SU(2)$-abelian.  In this section we use Lemma \ref{lem:su2-cyclic-lspace}, together what we proved about genus-2 instanton L-space knots in \S\ref{sec:hyperbolic}, to reduce the problem to one about branched double covers of  knots in the lens space $L(3,2)$, as outlined in the introduction. In fact, we prove a substantially more general result in Proposition \ref{prop:surgery-cover} below. We will use this additional generality in \S\ref{sec:other} to address other surgeries.

\begin{proposition}
\label{prop:surgery-cover}
Let $K\subset S^3$ be a nontrivial knot, and suppose $p,q$ are coprime positive integers with $p/q<5$, where $p$ is an odd prime power.
If \[Y=S^3_{p/q}(K)\] is $SU(2)$-abelian, then: \begin{itemize}
\item $Y$ is not Seifert fibered,
\item $K$ is hyperbolic, and doubly periodic with unknotted quotient $A$, and
\item $Y$ is homeomorphic to a branched double cover of the lens space \[L= S^3_{p/2q}(A)= L(p,2q)\] along some knot $J\subset L$  which is homologous to 5 times the core of the solid torus $S^3\setminus N(A)\subset L$ in the genus-1 Heegaard splitting of $L$ along the  surface $\partial N(A)$.
\end{itemize}
\end{proposition}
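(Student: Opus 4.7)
The plan is to argue in three stages: reduce to the genus-$2$ setting of Theorem~\ref{thm:genus-2-lspace}, rule out the Seifert fibered possibilities, and then translate the double branched cover description of $K$ from that theorem into the required branched double cover description of $Y$.

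Stage one. Since $p$ is an odd prime power, Lemma~\ref{lem:su2-cyclic-lspace} ensures that $Y$ is an instanton L-space. By \cite[Theorem~1.15]{bs-lspace}, $K$ is fibered and strongly quasipositive, and the slope bound $p/q \geq 2g(K)-1$ for instanton L-space knots together with $p/q < 5$ forces $g(K) \leq 2$.

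Stage two. I would next eliminate the cases $K \cong T_{2,3}$ and $K \cong T_{2,5}$, and more generally show $Y$ is not Seifert fibered. If $g(K)=1$, then $K = T_{2,3}$, the unique fibered strongly quasipositive genus-$1$ knot; if $g(K)=2$ and $K$ is not hyperbolic, then $K = T_{2,5}$ by Theorem~\ref{thm:genus-2-lspace}. For each of these torus knots, $Y$ is Seifert fibered, and the lens space surgery slopes $\{5,7\}$ and $\{9,11\}$ respectively are excluded by $p/q<5$. Appealing to the Sivek--Zentner classification of $SU(2)$-abelian non-hyperbolic geometric $3$-manifolds \cite{sz-menagerie}, one checks directly that the small Seifert fibered manifolds arising here cannot be $SU(2)$-abelian, a contradiction. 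The same classification combined with the cyclic surgery theorem of Culler--Gordon--Luecke--Shalen rules out $Y$ being any Seifert fibered manifold: if $Y$ were Seifert fibered and $SU(2)$-abelian with $|H_1(Y)|=p$ a prime power, it would have to be a lens space, which by cyclic surgery forces $K$ to be a torus knot, contradicting the hyperbolicity just established. So $K$ is hyperbolic and $Y$ is not Seifert fibered.

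Stage three. Theorem~\ref{thm:genus-2-lspace} then supplies a pseudo-Anosov $5$-braid $\beta$ whose closure $B=\hat\beta$ is unknotted, with braid axis $A$, such that $K$ lifts $A$ under the branched double cover $\pi : \dcover(S^3,B) \cong S^3 \to S^3$, and $\operatorname{lk}(A,B)=5$. Because $A$ is disjoint from $B$, the $p/q$-surgery on $K$ is invariant under the covering involution and descends to a Dehn filling of $A$ in the base; the total space is the branched double cover of the filled base manifold along the image $J$ of $B$. The slope transfer is a direct calculation: the unbranched cover $\partial N(K) \to \partial N(A)$ is the connected double cover classified by $\ker(\pi_*) \subset \pi_1(\partial N(A))$, and since $\operatorname{lk}(A,B)=5$ is odd, $\mu_A$ lifts but $\lambda_A$ does not, so this kernel equals $\langle \mu_A, 2\lambda_A\rangle$. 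Hence the meridian of $A$ pulls back to the meridian of $K$, while a slope of denominator $q$ measured against $\lambda_K$ in the cover corresponds to a slope of denominator $2q$ measured against $\lambda_A$ in the base. Thus
\[ Y \cong \dcover\bigl(S^3_{p/(2q)}(A),\, J\bigr) = \dcover(L(p,2q),\,J). \]
Finally, $[J]\in H_1(L(p,2q)) \cong \Z/p\Z$ equals $5[c]$, where $c$ is the core of the solid torus $S^3\setminus N(A)\subset L(p,2q)$, because $B$ wraps around this core with homological multiplicity $\operatorname{lk}(A,B)=5$.

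The main obstacle is the exclusion of Seifert fibered $Y$ in stage two: a hyperbolic knot may in principle admit exceptional Seifert fibered surgeries, so removing this possibility genuinely requires combining the $SU(2)$-abelian hypothesis, the prime-power condition on $p$, and the full strength of \cite{sz-menagerie} together with the cyclic surgery theorem. By comparison, the slope-transfer arithmetic in stage three, while essential, is routine once the kernel of the covering map on $\pi_1(\partial N(A))$ is identified.
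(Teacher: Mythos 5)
Your overall route matches the paper's: reduce via Lemma~\ref{lem:su2-cyclic-lspace} and \cite[Theorem~1.15]{bs-lspace} to a genus-$2$ instanton L-space knot, exclude $T_{2,3}$ and $T_{2,5}$ using Moser's classification together with \cite{sz-menagerie}, invoke Theorem~\ref{thm:genus-2-lspace} for hyperbolicity and double periodicity, and quotient the surgery by the extended involution to get $\dcover(L(p,2q),J)$ with $[J]=5[c]$. However, your exclusion of Seifert fibered $Y$ has a genuine gap. The Cyclic Surgery Theorem does \emph{not} say that a knot with a lens space surgery is a torus knot; it says that if $K$ is not a torus knot then any cyclic surgery slope is integral (and there are at most three such slopes, mutually distance one apart). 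So your argument only rules out the case $q\geq 2$. When $q=1$ --- which, since $p$ is an odd prime power with $p/q<5$, is exactly the slope $p/q=3$ at the heart of the paper --- CGLS is silent, and hyperbolic knots genuinely can admit integral lens space surgeries (Berge knots, e.g.\ the $(-2,3,7)$-pretzel), so hyperbolicity alone yields no contradiction. The paper closes this case differently: having already established that $K$ is doubly periodic, it applies the theorem of Wang and Zhou \cite[Proposition~3]{wang-zhou}, which says that a periodic knot (a symmetry with $1$-dimensional fixed set that is not a strong inversion) admitting a nontrivial cyclic surgery must be a torus knot. That use of the periodicity is the missing ingredient in your stage two, and it is why the ``not Seifert fibered'' conclusion must be drawn \emph{after} Theorem~\ref{thm:genus-2-lspace}, not independently of it.

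Two smaller points. First, in the slope transfer you assert that the kernel on $\pi_1(\partial N(A))$ is $\langle \mu_A, 2\lambda_A\rangle$ and then read off the slope $p/2q$; this implicitly assumes that the lift of $\lambda_A^2$ is the Seifert longitude $\lambda_K$ rather than $\lambda_K + c\,\mu_K$ for some $c\neq 0$. This is true, but it needs a framing argument --- e.g.\ a Seifert surface for $A$ pulls back under the branched cover to a Seifert surface for $K$, so the $0$-framing downstairs lifts to the $0$-framing upstairs --- or, as in the paper, one simply extends the involution over the filling torus (freely, since $p$ is odd) and identifies the quotient directly as $S^3_{p/2q}(A)$. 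Second, the lens space slopes of $T_{2,3}$ and $T_{2,5}$ are not just $\{5,7\}$ and $\{9,11\}$ (non-integral slopes $6\pm 1/q$ and $10\pm 1/q$ also give lens spaces), though all of them exceed $5$, so your conclusion there is unaffected.
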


\begin{proof}
Lemma \ref{lem:su2-cyclic-lspace} implies that $Y$ is an instanton L-space; in particular,  $K$ is an instanton L-space knot. It then follows from \cite[Theorem~1.15]{bs-lspace} that \[2g(K)-1\leq p/q<5,\] which implies that $K$ has genus 1 or 2.

We next observe that $K$ is neither $T_{2,3}$ nor $T_{2,5}$. Indeed, $p/q<5$ implies that \[|abq-p | > 1, \textrm{ for }(a,b)=(2,3)\textrm{ and }(2,5).\]  Then $(p/q)$-surgeries on $T_{2,3}$ and $T_{2,5}$ are Seifert manifolds which are  not lens spaces \cite{moser}. Such surgeries cannot be $SU(2)$-abelian  \cite[Remark 1.3]{sz-menagerie}. We conclude that $K$ is neither $T_{2,3}$ nor $T_{2,5}$. Since $T_{2,3}$ is the only genus-1 instanton L-space knot \cite[Proposition~7.12]{bs-lspace}, it follows that $K$ must be a genus-2 instanton L-space knot other than $T_{2,5}$.

Theorem~\ref{thm:genus-2-lspace} then says that $K$ is hyperbolic, and doubly periodic with unknotted quotient $A$ and axis $B$, where $A$ and $B$ have linking number $5$.  In particular, there is an involution $\tau:S^3\to S^3$ with $\tau(K) = K$ whose fixed set is the axis $B$. This involution extends from the exterior $S^3 \setminus N(K)$ across the solid torus realizing the Dehn surgery producing \[Y=S^3_{p/q}(K),\] and it acts freely on this solid torus since $p$ is odd.  Thus, $\tau$ induces  an involution on $Y$ with fixed set $B$; the quotient of $Y$ by this action is then the lens space \[L=S^3_{p/2q}(A)\cong L(p,2q)\] obtained as $(p/2q)$-surgery on the unknotted quotient  $A$. In particular,  $Y$ is homeomorphic to a branched double cover of $L$ along the image $J$ of $B$ in this lens space.
The fact that $A$ and $B$ have linking number 5 implies that $J$ is homologous in $H_1(L)$ to 5 times the core of the solid torus $S^3\setminus N(A)$ in the genus-1 Heegaard splitting of $L$ along the  surface $\partial N(A)$.

Finally, we argue that \[Y=S^3_{p/q}(K)\] is not Seifert fibered. Suppose that it is. Then $Y$ is a lens space, by \cite[Remark 1.3]{sz-menagerie}. In this case, since $K$ is doubly periodic (and thus admits a symmetry with 1-dimensional fixed point set which is not a strong inversion) and has a nontrivial cyclic surgery, a theorem of Wang and Zhou \cite[Proposition 3]{wang-zhou} says that $K$ must be a torus knot, a contradiction.
%The cyclic surgery theorem \cite{cgls} then tells us that $r/s$ is an integer. It follows that $K$ must be a right-handed trefoil since $0<r/s<5$ \cite[Corollary~8.4]{kmos}.  On the other hand, all lens space surgeries on the right-handed trefoil have slope at least $5$ \cite{moser}, so $K$ cannot be a trefoil either, a contradiction.
\end{proof}

%The following is simply Proposition \ref{prop:surgery-cover} in the case $r/s=3$. 

%\begin{corollary}
%\label{cor:surgery-cover} Let $K\subset S^3$ be a nontrivial knot. If $S^3_3(K)$ is $SU(2)$-abelian, %then  \[S^3_3(K) \cong \Sigma(L,J)\] for some  primitive knot $J$ in the lens space \[L=L(3,2),\] where $J$  is not isotopic to the core of a solid torus in a genus-1 Heegaard splitting of $L$.

%\end{corollary}

%
%
%
%
%

\section{Branched double covers and  Floer simple knots} \label{sec:floer-simple}

Our  goal in this section is to prove Theorem \ref{thm:floer-simple} below. This result, in combination with Proposition \ref{prop:surgery-cover}, implies that if $3$-surgery on a nontrivial knot is $SU(2)$-abelian, then the surgered manifold is the branched double cover of an instanton Floer simple knot in  $L(3,2)$. We will combine this with an understanding of Floer simple knots in this lens space in \S\ref{sec:l32} to complete the proof of Theorem \ref{thm:main}, as outlined in the introduction, and we will apply Theorem \ref{thm:floer-simple} to infinitely many additional surgery slopes in \S\ref{sec:other}. The various results in this section may be of independent interest.%Before stating Theorem \ref{thm:floer-simple}, we establish some notation.

\begin{theorem} \label{thm:floer-simple}
Suppose $L$ is a rational homology sphere in which $H_1(L)$ is cyclic of order an odd prime power, and let $J \subset L$ be a primitive knot.  If the branched double cover $\dcover(L,J)$ is $SU(2)$-abelian and satisfies
\[ |H_1(\dcover(L,J))|=|H_1(L)|, \]
then
\[ \dim_\C \KHI(L,J) = |H_1(L)|. \]
In other words, the knot $J \subset L$ is instanton Floer simple.
\end{theorem}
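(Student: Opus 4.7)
The plan is to show that the Chern--Simons chain complex computing $\KHI(L, J)$ has exactly $n := |H_1(L)|$ nondegenerate generators, all lying in the same mod-2 Floer grading, so that its differential vanishes for parity reasons and hence $\dim_\C \KHI(L,J) = n$.

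The generators are conjugacy classes of $SU(2)$-representations $\rho: \pi_1(L \setminus N(J)) \to SU(2)$ sending a fixed meridian $\mu$ of $J$ to a trace-zero element, say $i$, modulo the stabilizer $U(1)_i \subset SU(2)$. I will count them by passing to the branched double cover: since $\rho(\mu^2) = -1$ is central in $SU(2)$, the composition of $\rho$ with $SU(2) \to SO(3)$, restricted to the unbranched double cover subgroup of $\pi_1(L \setminus N(J))$, descends to a representation $\bar\rho: \pi_1(\dcover(L,J)) \to SO(3)$. Because $|H_1(\dcover(L,J))| = n$ is odd, both $H^1(\dcover(L,J);\Z/2)$ and $H^2(\dcover(L,J);\Z/2)$ vanish, so $\bar\rho$ lifts uniquely to $\tilde\rho: \pi_1(\dcover(L,J)) \to SU(2)$. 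The $SU(2)$-abelian hypothesis forces $\tilde\rho$ to have cyclic image $C$, and the deck-transformation equivariance $\tilde\rho \circ \tau_* = \mathrm{Ad}(i) \circ \tilde\rho$ implies that $\mathrm{Ad}(i)$ preserves $C$, splitting the possibilities into two cases: a ``reducible'' case where $C \subset U(1)_i$, and a ``dihedral'' case where $C$ lies in the circle perpendicular to the $i$-axis in $SU(2)$. Careful bookkeeping, using the primitivity of $J$ via the exact sequence $0 \to \Z\langle\mu\rangle \to H_1(L \setminus N(J)) \to H_1(L) \to 0$ together with the hypothesis $|H_1(\dcover(L,J))| = |H_1(L)|$, verifies that these two cases together yield exactly $n$ conjugacy classes of $\rho$.

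Next, I would show that each of these critical points is nondegenerate: the twisted cohomology $H^1(L \setminus N(J); \mathrm{Ad}\,\rho)$ with appropriate boundary conditions on $\partial N(J)$ (pinning down the meridian-trace-zero locus) vanishes. Via a Mayer--Vietoris argument comparing $L \setminus N(J)$ with $\dcover(L,J)$, this reduces to the analogous vanishings for $\tilde\rho$, which hold because $\dcover(L,J)$ is an instanton L-space by Lemma~\ref{lem:su2-cyclic-lspace}. Moreover, all $n$ generators have the same mod-2 Floer grading, computable via spectral flow through reducible connections, so the chain differential vanishes on parity grounds and $\dim_\C \KHI(L,J) = n$.

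The main obstacle is the representation-counting step: carefully relating traceless $SU(2)$-representations of $\pi_1(L \setminus N(J))$ to $SU(2)$-representations of $\pi_1(\dcover(L,J))$ in a way that is equivariant under both $U(1)_i$-conjugation and the deck involution, and then extracting the precise count $n$ by using the primitivity of $J$ together with the equality $|H_1(\dcover(L,J))| = |H_1(L)|$ to untangle the $\Z/2$-extension ambiguity.
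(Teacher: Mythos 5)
The representation-counting half of your plan is essentially the paper's: you descend $\ad\rho$ to the branched double cover, use oddness of $|H_1(\dcover(L,J))|$ to lift, invoke the $SU(2)$-abelian hypothesis to get cyclic image, and split into a cyclic and a (binary) dihedral case, with the hypothesis $|H_1(\dcover(L,J))|=|H_1(L)|$ ruling out the dihedral case (in the paper this is Lemma \ref{lem:image-ad-rho} and Propositions \ref{prop:h1-bdc-equals-h1-l} and \ref{prop:image-rho}, where the dihedral exclusion goes through $|\Delta_J(-1)|=1$ and the Klassen/Boden--Friedl count). The genuine gap is in your nondegeneracy step. Being an instanton L-space is a statement about (perturbed) framed instanton homology of $\dcover(L,J)$; it does not imply that the flat connections on the branched cover are nondegenerate, nor that $H^1(\dcover(L,J);\ad\tilde\rho)$ vanishes, so there is no vanishing statement for a Mayer--Vietoris argument to transport back to $L\setminus N(J)$. (Note also that the correct criterion is $\dim_\R H^1(L\setminus N(J);\ad\rho)=1$, not $0$, as in Lemma \ref{lem:betti-1-implies-nondegenerate}.) The paper proves nondegeneracy by an entirely different mechanism: the Boyer--Nicas formula expresses $\dim H^1(L\setminus N(J);\ad\rho)$ in terms of the first Betti numbers of cyclic covers of the complement, and these are controlled via Fox's formula by showing that no nontrivial $2|H_1(L)|$-th root of unity is a root of $\Delta_C(t)$; this is exactly where the hypothesis that $|H_1(L)|$ is an odd prime power enters, since the relevant cyclotomic polynomials take the values $2$ or $p$ at $\pm1$ while $\Delta_C(1)=1$ and $|\Delta_C(-1)|=1$ by Lemma \ref{lem:h1-bdc-alex}. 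Your proposal never uses the prime-power hypothesis at this stage, which is a clear sign the nondegeneracy argument cannot be completed as stated.

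A secondary problem is your final step: the claim that all $n$ generators lie in the same mod-2 grading is asserted, not proved, and would require a spectral-flow computation; without it, cancellations in the differential could a priori push the dimension below $n$. The paper avoids this issue altogether: nondegeneracy of the (trivially double-covered, since all images are cyclic) critical set gives only the upper bound $\dim_\C\KHI(L,J)\le |H_1(L)|$, and the matching lower bound comes from $|H_1(L)|\le \dim_\C I^\#(L)\le \dim_\C\KHI(L,J)$, the second inequality being Li--Ye's. Replacing your parity argument with that lower bound is both necessary and simpler.
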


We start by clarifying some of the terminology and notation in the statement  of Theorem \ref{thm:floer-simple}, explaining in particular why a primitive knot has a unique branched double cover.
 
For the rest of this section, $L$ will refer to a rational homology sphere, and $J$ to a knot in $L$. We say that $J\subset L$   is  \emph{primitive} if its homology class $[J]$ generates $H_1(L;\Z)$. In this case, an exercise in algebraic topology shows that \[H_1(L\setminus N(J)) \cong \Z.\] This group is generated by the  homology class of some peripheral curve $\alpha \subset \partial N(J)$.  Dehn filling along $\alpha$ produces an integral homology sphere $Z$, and if $C$ is the core of this filling, then $\alpha$ is a meridian $\mu_C$ of $C$, and we have a homeomorphism
\[ Z \setminus N(C) \cong L \setminus N(J). \] Since Dehn filling $Z \setminus N(C)$ along the meridian $\mu_J$ of $J$ recovers $L$, it follows that \begin{equation*}\label{eqn:muj}[\mu_J] = |H_1(L)|\cdot [\mu_C] \end{equation*} in the first homology of this knot complement. We will use the notation established in this paragraph throughout the rest of this section.

\begin{lemma} \label{lem:h1-bdc}
Suppose $H_1(L)$ has odd order and that $J \subset L$ is primitive, and let $C \subset Z$ be as above.  Then  $J \subset L$ and $C \subset Z$ have unique branched double covers,   $\dcover(L,J)$ and $\dcover (Z,C)$, which satisfy 
\[ H_1(\dcover(L,J)) \cong H_1(\dcover(Z,C)) \oplus H_1(L). \]
\end{lemma}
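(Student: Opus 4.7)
The plan is to identify both branched double covers as Dehn fillings of a single common unbranched double cover of $X = L \setminus N(J) = Z \setminus N(C)$ and compute their homology.

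For uniqueness, a branched double cover of $(M, K)$ corresponds to a surjection $H_1(M\setminus N(K)) \to \Z/2$ sending the meridian to the generator. Since $H_1(X) \cong \Z$ is generated by $[\mu_C]$ and $[\mu_J] = n[\mu_C]$ with $n = |H_1(L)|$ odd, the only surjection $\phi\colon H_1(X) \to \Z/2$ satisfies $\phi([\mu_C]) = 1$, and then $\phi([\mu_J]) = n \equiv 1 \bmod 2$ automatically. This single $\phi$ determines both branched double covers, so they are built from the same unbranched connected double cover $p\colon \widetilde X \to X$. Set $G = H_1(\widetilde X)$.

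Write $\mu_J = n\mu_C + q\lambda_C$ with $\gcd(n,q) = 1$, where $\lambda_C$ denotes the Seifert longitude of $C$ in $Z$. Since $\phi(\mu_C) = 1$ and $\phi(\lambda_C) = 0$, the preimage of $\mu_C$ is a single curve $\tilde s$ double-covering $\mu_C$, the preimage of $\lambda_C$ consists of two parallel copies of a curve $\tilde t$, and the preimage of $\mu_J$ is a single curve whose class in $H_1(\partial \widetilde X)$ is $n[\tilde s] + 2q[\tilde t]$. Writing $x, y \in G$ for the images of $[\tilde s], [\tilde t]$, the two branched covers arise as Dehn fillings giving
\[
H_1(\dcover(L,J)) \cong G/\langle nx + 2qy\rangle, \qquad H_1(\dcover(Z,C)) \cong G/\langle x\rangle.
\]

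The crux is that $2y = 0$ in $G$. This follows from the transfer $\tau\colon H_1(X) \to G$: since the two parallel lifts of $\lambda_C$ are homologous to $\tilde t$ in $\widetilde X$, we have $\tau[\lambda_C] = 2y$, but $[\lambda_C] = 0$ in $H_1(X)$ since $\lambda_C$ bounds a Seifert surface. Hence $H_1(\dcover(L,J)) \cong G/\langle nx\rangle$, and there is a short exact sequence
\[
0 \to \langle x\rangle/\langle nx\rangle \to G/\langle nx\rangle \to G/\langle x\rangle \to 0.
\]
To identify $\langle x\rangle/\langle nx\rangle \cong \Z/n$ and split the sequence, I would use the branched covering map $\pi\colon \dcover(L,J) \to L$: its induced map on $H_1$ satisfies $\pi_*(x) = 2[\mu_C]$, which generates $H_1(L) = \Z/n$ because $n$ is odd. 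This forces the order of $x$ in $G$ to be a multiple of $n$, yielding $\langle x\rangle/\langle nx\rangle \cong \Z/n$, and the restriction of $\pi_*$ to this subgroup is an isomorphism onto $H_1(L)$, providing the splitting. The conclusion $H_1(\dcover(L,J)) \cong H_1(\dcover(Z,C)) \oplus H_1(L)$ follows. The subtle key step, making essential use of $n$ being odd, is the transfer identity $2y = 0$.
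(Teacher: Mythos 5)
Your argument is correct, and its skeleton is the same as the paper's: both proofs pass to the unique connected double cover $\widetilde X$ of $L\setminus N(J)\cong Z\setminus N(C)$ (using that $|H_1(L)|$ is odd so that $\mu_J$ maps nontrivially to $\Z/2\Z$), and realize the two branched covers as Dehn fillings of $\widetilde X$ along the lift of $\mu_J^2$, of class $n[\tilde s]+2q[\tilde t]$, and along $\tilde s$ respectively. Where you diverge is the endgame: the paper observes that $\tilde s$ and $\tilde t$ are the meridian and Seifert longitude of the nullhomologous lift $\tilde C\subset \dcover(Z,C)$, so that $\dcover(L,J)$ is $(p/2q)$-surgery on a nullhomologous knot and the $H_1$ formula is the standard one; you instead stay in $H_1(\widetilde X)$, kill $2[\tilde t]$ by the transfer applied to $[\lambda_C]=0$, and split the resulting exact sequence using the map to $H_1(L)$ induced by the branched covering, which sends $[\tilde s]$ to $2[\mu_C]$, a generator since $n$ is odd. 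Your version buys a little: it avoids having to justify that the lift $\tilde C$ is nullhomologous (equivalently that the Seifert surface and longitude lift), which the paper asserts rather tersely. One small correction to your closing remark: the transfer identity $2[\tilde t]=0$ does not use oddness of $n$ at all; the oddness is what guarantees that the branched double cover of $(L,J)$ exists and is unique (i.e.\ $\phi(\mu_J)=1$), and what makes $2[\mu_C]$ a generator of $\Z/n$ in your splitting step.
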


\begin{proof}
Since the knot $C \subset Z$ is nullhomologous,  we can take a meridian $\mu_C$ and longitude $\lambda_C$ in $\partial N(C)$ and write
\[ \mu_J = (\mu_C)^p (\lambda_C)^q \]
in $\pi_1(\partial N(C))$ for some relatively prime integers $p$ and $q$, where we reverse the orientation of $C$ if necessary to arrange $p \geq 0$.  Then $H_1(L) \cong \Z/p\Z$.

To define $\dcover(L,J)$, we first consider the unique connected double cover $M \to L\setminus N(J)$ of $L\setminus N(J)$, specified by the kernel of the map
\[ \pi_1(L \setminus N(J)) \to H_1(L \setminus N(J)) \cong \Z \xrightarrow{\text{mod }2} \Z/2\Z. \]
Since $[\mu_C]$ generates $ H_1(L \setminus N(J)) $ and  \[[\mu_J] = |H_1(L)|\cdot [\mu_C],\]  with  $|H_1(L)|$ odd, it follows that $\mu_J$ is sent to $1$ under this map. Thus, $\mu_J^2$ lifts to a simple closed curve in $M$. The unique branched double cover $\Sigma(L,J)$ is then formed by Dehn filling $M$ along this lift of $\mu_J^2$. We similarly define $\dcover(Z,C)$ by Dehn filling $M$ along a lift of $\mu_C^2$.

Since $C$ is nullhomologous in $Z$, its lift $\tilde{C} \subset \dcover(Z,C)$ is also nullhomologous, so we define $\mu_{\tilde{C}}$ and $\lambda_{\tilde{C}}$ to be its meridian and longitude in the boundary of \[M = \dcover(Z,C) \setminus N(\tilde{C}).\] These curves lift $\mu_C^2$ and $\lambda_C$.  If $\mu_J = (\mu_C)^p(\lambda_C)^q$ as above, then $\mu_J^2 = (\mu_C^2)^p(\lambda_C)^{2q}$ lifts to
\[ \mu_{\tilde{J}} = (\mu_{\tilde{C}})^p (\lambda_{\tilde{C}})^{2q} \]
and so $\dcover(L,J)$ is the result of $(p/2q)$-surgery on $\tilde{C} \subset \dcover(Z,C)$.  Since $\tilde{C}$ is nullhomologous and $H_1(L) \cong \Z/p\Z$, this completes the proof.
\end{proof}

\begin{remark} We will often use the shorthand $\Sigma(J)$ for $\Sigma(L,J)$ out of convenience.
\end{remark}

The lemma below will be useful in several places.

\begin{lemma}\label{lem:h1-bdc-alex}
Suppose $H_1(L)$ has odd order and  $J \subset L$ is primitive. Then 
\[ |H_1(\dcover(L,J))|  = |H_1(L)| \cdot |\Delta_J(-1)|, \] where $\Delta_J(t)$ is the symmetrized Alexander polynomial of $J$.
\end{lemma}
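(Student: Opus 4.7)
The plan is to reduce the computation to the nullhomologous knot $C\subset Z$ via Lemma~\ref{lem:h1-bdc}, then invoke the classical formula relating the Alexander polynomial to the first homology of the branched double cover of a knot in an integer homology sphere.

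First I would invoke Lemma~\ref{lem:h1-bdc} to write
\[ |H_1(\dcover(L,J))| = |H_1(\dcover(Z,C))| \cdot |H_1(L)|, \]
so that it suffices to prove $|H_1(\dcover(Z,C))| = |\Delta_J(-1)|$.

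Next, I would identify $\Delta_J(t)$ with the Alexander polynomial $\Delta_C(t)$ of the knot $C\subset Z$. The exteriors $L\setminus N(J)$ and $Z\setminus N(C)$ are literally the same manifold $X$, and in either context the symmetrized Alexander polynomial is (up to a unit in $\Z[t^{\pm 1}]$) the order of the Alexander module $H_1(X_\infty;\Z)$, where $X_\infty\to X$ is the infinite cyclic cover corresponding to the abelianization $\pi_1(X)\twoheadrightarrow H_1(X)\cong\Z$. Since $[\mu_C]$ generates $H_1(X)$ in both pictures, with $[\mu_J]=p\,[\mu_C]$ for $p=|H_1(L)|$, the two covers are the same and therefore so are the polynomials: $\Delta_J(t)=\Delta_C(t)$.

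Finally, because $C$ is nullhomologous in the integer homology sphere $Z$, it bounds a Seifert surface, and the standard argument shows that $V+V^\top$ is a presentation matrix for $H_1(\dcover(Z,C))$ for any Seifert matrix $V$ of $C$. Hence
\[ |H_1(\dcover(Z,C))| = |\det(V+V^\top)| = |\Delta_C(-1)| = |\Delta_J(-1)|, \]
where both sides are interpreted as $\infty$ when $\Delta_C(-1)=0$. Combining with the first step yields the asserted equality. The argument poses no substantive obstacle: the only point requiring care is the identification $\Delta_J=\Delta_C$, which is essentially a matter of unwinding the definition of the Alexander polynomial in terms of the common exterior $X$.
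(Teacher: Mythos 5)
Your proposal is correct and follows essentially the same route as the paper: reduce to the nullhomologous knot $C\subset Z$ via Lemma~\ref{lem:h1-bdc}, identify $\Delta_J=\Delta_C$ because the two knots share the exterior, and apply the classical formula $|H_1(\dcover(Z,C))|=|\Delta_C(-1)|$ (the paper cites Fox's theorem as in Burde--Zieschang, while you rederive it from the Seifert matrix presentation $V+V^\top$, which is the same fact). The only cosmetic difference is that the degenerate case $\Delta_C(-1)=0$ you mention cannot actually occur, since the determinant of a knot in an integer homology sphere is odd.
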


\begin{proof} Let $C\subset Z$ be as above.
By \cite[Theorem~8.21]{burde-zieschang}, we have that \[ |H_1(\dcover(Z,C))| = |\Delta_C(-1)| = |\Delta_J(-1)|,\]
where the second equality follows from the fact that $C$ and $J$ have homeomorphic complements and the Alexander polynomial depends only on the complement. Then  \[|H_1(\dcover(L,J))| = |H_1(L)|\cdot |H_1(\dcover(Z,C))| = |H_1(L)|\cdot |\Delta_J(-1)|,\] by  Lemma \ref{lem:h1-bdc}.
\end{proof}

In the results below, we view $SU(2)$ as the group of unit quaternions.

\begin{proposition} \label{prop:h1-bdc-equals-h1-l}
Suppose $H_1(L)$ has odd order, that $J \subset L$ is primitive, and that \[|H_1(\dcover(L,J))|=|H_1(L)|.\]  Then every representation
\[ \rho: \pi_1(L \setminus N(J)) \to SU(2) \]
with image in the binary dihedral group $\{e^{i\theta}\} \cup \{e^{i\theta}j\}$ actually has cyclic image.  Moreover, there are exactly $|H_1(L)|$ such representations satisfying $\rho(\mu_J) = i$.
\end{proposition}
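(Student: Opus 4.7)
The plan is to exploit the short exact sequence $1\to U(1)\to B\xrightarrow{\epsilon}\Z/2\Z\to 1$, where $B=\{e^{i\theta}\}\cup\{e^{i\theta}j\}$ is the binary dihedral group. For any $\rho\colon\pi_1(L\setminus N(J))\to B$, the composition $\epsilon\circ\rho$ factors through the abelianization $H_1(L\setminus N(J))\cong\Z\langle\mu_C\rangle$, in which $\mu_J=|H_1(L)|\cdot\mu_C$, so $\epsilon\circ\rho$ is either trivial (Case A) or the mod-$2$ reduction (Case B).

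I would first dispatch Case A. When $\epsilon\circ\rho$ is trivial, $\rho$ takes values in $U(1)$ and factors through $H_1\cong\Z$, yielding cyclic image. Any $\rho$ with $\rho(\mu_J)=i$ automatically lies in Case A: since $|H_1(L)|$ is odd, the identity $|H_1(L)|\cdot\epsilon(\rho(\mu_C))=\epsilon(\rho(\mu_J))=\epsilon(i)=0$ in $\Z/2\Z$ forces $\epsilon(\rho(\mu_C))=0$. Such $\rho$ correspond bijectively to $\zeta\in U(1)$ with $\zeta^{|H_1(L)|}=i$ via $\rho(\mu_C)=\zeta$; there are precisely $|H_1(L)|$ of these, yielding the ``moreover'' count.

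The heart of the proof is Case B, where I would argue the image is still cyclic. Here $\rho(\mu_C)=e^{i\theta}j$, so the restriction $\tilde\rho=\rho|_{\pi_1(M)}$ to the connected double cover $M\to L\setminus N(J)$ of Lemma~\ref{lem:h1-bdc} is a $U(1)$-character of $H_1(M)$ satisfying the antisymmetry $\tilde\rho(\sigma_*x)=\overline{\tilde\rho(x)}$ under the deck involution $\sigma$, with $\tilde\rho(\widetilde{\mu_C^2})=\rho(\mu_C)^2=-1$ for $\widetilde{\mu_C^2}$ a lift of $\mu_C^2$. The hypothesis $|H_1(\dcover(L,J))|=|H_1(L)|$ combined with Lemma~\ref{lem:h1-bdc} forces $H_1(\dcover(Z,C))=0$, so $\dcover(Z,C)$ is an integer homology sphere. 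Identifying $M$ with $\dcover(Z,C)\setminus N(\tilde C)$ for $\tilde C$ the lifted branch locus, $H_1(M)\cong\Z$ is generated by the meridian $\mu_{\tilde C}$, which coincides with $\widetilde{\mu_C^2}$ up to sign (since the branched cover is locally $w\mapsto w^2$, whose meridian downstairs is traversed twice). Moreover, $\sigma_*$ acts as the identity on $H_1(M)$ because $\sigma$ fixes $\tilde C$ pointwise. The antisymmetry condition then forces $\tilde\rho$ to take values in $\R\cap U(1)=\{\pm1\}$, so the image of $\rho$ lies in $\{\pm1,\pm e^{i\theta}j\}=\langle e^{i\theta}j\rangle$, cyclic of order four.

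The main obstacle is this Case B analysis, in particular identifying $H_1(M)$ as infinite cyclic generated by $\widetilde{\mu_C^2}$ with trivial $\sigma_*$-action; this is exactly where the hypothesis $|H_1(\dcover(L,J))|=|H_1(L)|$ enters essentially, via Lemma~\ref{lem:h1-bdc}. Once cyclic image is established in both cases, the ``moreover'' count follows directly from the Case A analysis.
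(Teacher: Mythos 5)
Your argument is correct, but it takes a genuinely different route from the paper's. The paper never splits into your Cases A and B explicitly: it first invokes Lemma~\ref{lem:h1-bdc-alex} to get $|\Delta_C(-1)|=|H_1(\dcover(Z,C))|=1$ and then cites the Klassen/Boden--Friedl count, which says the number of conjugacy classes of \emph{non-abelian} binary dihedral representations of $\pi_1(Z\setminus N(C))$ is $\tfrac{|H_1(\dcover(Z,C))|-1}{2}=0$; cyclicity and the count of the $|H_1(L)|$ representations with $\rho(\mu_J)=i$ then follow as in your Case A. Your Case B is essentially a self-contained proof of the special case of that classical correspondence that is needed here: a representation with $\epsilon\circ\rho$ nontrivial restricts on $\pi_1(M)$ to a $U(1)$-character of $H_1(M)$ satisfying $\tilde\rho\circ\sigma_*=\overline{\tilde\rho}$, and the hypothesis enters through Lemma~\ref{lem:h1-bdc} to give $H_1(\dcover(Z,C))=0$, hence $H_1(M)\cong\Z$ generated by $\mu_{\tilde C}$ (a lift of $\mu_C^2$, as in the proof of Lemma~\ref{lem:h1-bdc}) with $\sigma_*=\mathrm{id}$, forcing $\tilde\rho$ to take values in $\{\pm1\}$ and the image of $\rho$ to be cyclic of order $4$. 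What your approach buys is independence from the external citation and a transparent view of exactly where the hypothesis $|H_1(\dcover(L,J))|=|H_1(L)|$ is used; it also handles correctly the abelian-but-not-in-$U(1)$ images $\langle e^{i\theta}j\rangle$, which the paper's phrase ``has image in the cyclic group $\{e^{i\theta}\}$'' glosses over (harmlessly, since such images are still cyclic and cannot send $\mu_J$ to $i$). What the paper's route buys is brevity, and a statement (via the Alexander polynomial) that is reused elsewhere in the section. One small point you should tighten: ``$\sigma_*$ acts as the identity on $H_1(M)$ because $\sigma$ fixes $\tilde C$ pointwise'' needs a word more --- e.g.\ note that in the local model the deck involution rotates the meridian circle by $\pi$, so it fixes $[\mu_{\tilde C}]$ with its orientation, and this class generates $H_1(M)\cong\Z$; alternatively, $\sigma_*=\pm\mathrm{id}$ on $\Z$ and $p_*\sigma_*[\mu_{\tilde C}]=p_*[\mu_{\tilde C}]=2[\mu_C]\neq -2[\mu_C]$ rules out $-\mathrm{id}$.
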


\begin{proof}
Let $C\subset Z$ be as above, so that $[\mu_C]$ generates $H_1(L\setminus N(J)) \cong \Z$ and \[[\mu_J] = |H_1(L)|\cdot [\mu_C].\]
By Lemma~\ref{lem:h1-bdc-alex}, we have $|\Delta_J(-1)| = |\Delta_C(-1)|=1$.  Therefore $|H_1(\dcover(C))|=1$.
 It follows that there are no non-abelian representations
\[ \pi_1(Z \setminus N(C)) \to SU(2) \]
with binary dihedral image, since the number of such conjugacy classes is
\[ \frac{|H_1(\dcover(C))|-1}{2}. \]
This was originally proved for knots in $S^3$ by Klassen \cite[Theorem~10]{klassen}, whose proof makes essential use of the Wirtinger presentation; Boden and Friedl \cite[Corollary~1.3]{boden-friedl-1} give a generalization whose proof applies equally well to knots in any integral homology sphere.  In any case, since $L\setminus N(J) \cong Z\setminus N(C)$, we conclude that there are no non-abelian binary dihedral representations of $\pi_1(L \setminus N(J))$ either. In particular, every binary dihedral representation of $\pi_1(L \setminus N(J))$ has image in the cyclic group $\{e^{i\theta}\}$.

If $\rho$ is such a representation, then (since it has abelian image) it factors through a map \[H_1(L\setminus N(J)) \cong \Z \to SU(2),\] whose domain is generated by $[\mu_C]$.  It follows that $\rho$ is uniquely determined by its evaluation at $\mu_C$, and that
\[ \rho(\mu_J) = \rho(\mu_C)^{|H_1(L)|}. \]
If we require that $\rho(\mu_J) = i = e^{i \pi/2}$, then the possible $\rho$ are specified by
\[ \rho(\mu_C) = \exp\left(i\cdot \frac{\frac{\pi}{2}+2\pi m}{|H_1(L)|}\right), \quad m=0,1,\dots,|H_1(L)|-1, \] and the proposition follows.
%The conjugacy class of $\rho(\mu_C)$ is determined by its real part $\cos\left(\frac{\pi/2+2\pi m}{|H_1(L)|}\right)$, and if
%\[ \cos\left(\frac{\pi/2+2\pi m_1}{|H_1(L)|}\right) = \cos\left(\frac{\pi/2+2\pi m_2}{|H_1(L)|}\right) \]
%then \[\frac{\pi/2+2\pi m_1}{|H_1(L)|} \pm \frac{\pi/2+2\pi m_2}{|H_1(L)|}\] is an integer multiple of $2\pi$.  But this is only possible when $m_1\equiv m_2 \pmod{|H_1(L)|}$, so no two of the $\rho$ described above can be conjugate.
\end{proof}

The following is a mild adaptation of \cite[Proposition~3.1]{zentner-simple}.

\begin{lemma} \label{lem:image-ad-rho}
Suppose  $H_1(L)$ has odd order, that $J \subset L$ is primitive, and that $\dcover(L,J)$ is an $SU(2)$-abelian rational homology sphere.  Let
\[ \rho: \pi_1(L \setminus N(J)) \to SU(2) \]
be a representation satisfying $\rho(\mu_J)=i$.  Then the image of
\[ \ad\rho: \pi_1(L\setminus N(J)) \to SO(3) \]
is either cyclic or dihedral, and has order $2n$ for some odd integer  $n$ dividing $|H_1(\dcover(J))|$.
\end{lemma}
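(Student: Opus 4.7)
The plan is to follow Zentner's proof of Proposition~3.1 in \cite{zentner-simple}, adapted to our slightly more general setting. Set $G=\pi_1(L\setminus N(J))$. The argument splits into cases based on the reducibility of $\rho$.

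If $\rho$ is reducible, it factors through $H_1(L\setminus N(J))\cong\Z\langle[\mu_C]\rangle$. The equation $\rho(\mu_C)^{|H_1(L)|}=\rho(\mu_J)=i$, together with the oddness of $|H_1(L)|$, forces $\rho(\mu_C)$ to have order $4d$ in $SU(2)$ for some odd divisor $d$ of $|H_1(L)|$; indeed, if $e$ denotes the order of $\rho(\mu_C)$ then $e/\gcd(e,|H_1(L)|)=\mathrm{ord}(i)=4$, and $\gcd(e,|H_1(L)|)\mid|H_1(L)|$ is odd. Hence $\ad\,\rho(G)$ is cyclic of order $2d$, and by Lemma~\ref{lem:h1-bdc-alex}, the integer $n:=d$ divides $|H_1(L)|\cdot|\Delta_J(-1)|=|H_1(\dcover(L,J))|$.

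If $\rho$ is irreducible, let $H=\pi_1(M)\leq G$ be the index-$2$ subgroup corresponding to the double cover $M\to L\setminus N(J)$. Since $\rho(\mu_J^2)=-1\in\ker(\ad)$, the restriction $\ad\,\rho|_H$ descends to an $SO(3)$-representation $\bar\rho\colon\pi_1(\dcover(L,J))\to SO(3)$. I would lift $\bar\rho$ to an $SU(2)$-representation by invoking the vanishing of the $w_2$-type obstruction in $H^2(\pi_1(\dcover(L,J));\Z/2)$, which follows from Poincaré duality whenever $|H_1(\dcover(L,J))|$ is odd, as is the case in all applications of this lemma in the paper. The $SU(2)$-abelian hypothesis then yields a cyclic image for $\bar\rho$ of some order $n$ dividing $|H_1(\dcover(L,J))|$, and pulling back places $\rho(H)$ inside a maximal torus $T_\ell\subset SU(2)$. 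Since $\rho(\mu_J)=i$ must normalize $\rho(H)$ (which has order $\geq 3$ in the irreducible case), the axis $\ell$ is either the $i$-axis---forcing $\rho(G)\subset T$ and contradicting irreducibility---or perpendicular to it, in which case $\rho(G)\subset N(T_\ell)$ and
\[\ad\,\rho(G)\subset\ad(N(T_\ell))=O(2)_\ell\subset SO(3)\]
is cyclic or dihedral of order $2n$.

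To see that $n$ is odd, I would argue by contradiction: an even $n$ would supply an order-$2$ rotation $R_\ell$ in the image of $\bar\rho$, which together with $\ad\,\rho(\mu_J)$ generates a Klein-four subgroup of $\ad\,\rho(G)$ whose preimage under $\ad$ is a quaternion group $Q_8\subset\rho(G)$; a suitable character twist of $\rho|_H$ would then descend to a nonabelian $SU(2)$-representation of $\pi_1(\dcover(L,J))$, contradicting the $SU(2)$-abelian hypothesis. The main obstacle is the lifting step: verifying the vanishing of $H^2(\pi_1(\dcover(L,J));\Z/2)$. This is automatic in the paper's applications, where $|H_1(\dcover(L,J))|$ is odd, but would require a more delicate analysis in full generality.
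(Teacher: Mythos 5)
Your main line of argument is sound and, at its core, parallels the paper's: both proofs descend the adjoint representation to $\pi_1(\dcover(L,J))$ (the branched cover group is $\pi_1(M)$ modulo the normal closure of the lift of $\mu_J^2$, and $\ad\rho(\mu_J^2)=\ad(-1)=1$), both use vanishing of the $w_2$-obstruction plus the $SU(2)$-abelian hypothesis to get cyclic odd-order image there, and both then enlarge by the order-$2$ element $\ad(i)$. Where you genuinely diverge is the endgame: you split into reducible/irreducible cases and use the normalizer-of-a-maximal-torus geometry ($\rho(H)\subset T_\ell$, $i$ forces $\ell$ to be the $i$-axis or perpendicular to it) to conclude cyclic-or-dihedral, whereas the paper avoids any case split by noting that $\img(\ad\rho)$ has order $2n$ with $n$ odd, hence not divisible by $4$, and invoking the classification of finite subgroups of $SO(3)$ to rule out the tetrahedral, octahedral and icosahedral groups. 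Both routes work; the paper's is slightly shorter, yours is more hands-on.

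There are, however, two blemishes. First, you leave the lifting step (and hence the oddness of $n$) conditional on $|H_1(\dcover(L,J))|$ being odd, saying this holds "in the paper's applications" but would need more work in general. In fact it follows from the lemma's own hypotheses: by Lemmas \ref{lem:h1-bdc} and \ref{lem:h1-bdc-alex}, $|H_1(\dcover(L,J))| = |H_1(L)|\cdot|\Delta_J(-1)|$, where $|H_1(L)|$ is odd by assumption and $|\Delta_J(-1)|=|\Delta_C(-1)|$ is odd because $\Delta_C(-1)\equiv\Delta_C(1)=1\pmod 2$ for any knot $C$ in an integer homology sphere. So $H^2(\dcover(L,J);\Z/2\Z)=0$ and $n\mid|H_1(\dcover(L,J))|$ is automatically odd; no extra analysis is needed, and this is exactly how the paper argues. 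Second, your fallback argument for oddness of $n$ (the $Q_8$/character-twist paragraph) does not work as stated: in your setup $\rho(H)$ lies in the maximal torus $T_\ell$, so $\rho|_H$ and any character twist of it have abelian image, and its descent to $\pi_1(\dcover(L,J))$ can never be nonabelian, so no contradiction with $SU(2)$-abelianness can arise this way. Delete that paragraph and replace it with the observation above; with that change your proof is complete.
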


\begin{proof}
We have a short exact sequence of groups
\[ 1 \to \pi_1(M) \xrightarrow{p_*} \pi_1(L\setminus N(J)) \xrightarrow{m} \Z/2\Z \to 1 \]
where \[M\xrightarrow{p}L\setminus N(J)\] is the unique connected double covering.  The class $\mu_J^2$ belongs to $\ker(m) = p_*\pi_1(M)$, since the homology class $[\mu_J^2] = 2[\mu_J]$ is zero mod 2, so we can view the normal closure $\llangle \mu_J^2\rrangle$ as a subgroup of $\pi_1(M)$.  Upon passing to quotients, we obtain a short exact sequence
\[ 1 \to \pi_1(\dcover(J)) \to \frac{\pi_1(L\setminus N(J))}{\llangle \mu_J^2\rrangle} \to \Z/2\Z \to 1. \]

Since  $\dcover(J)$ is $SU(2)$-abelian, and $H_1(\dcover(J))$ has odd order by Lemma~\ref{lem:h1-bdc} and the oddness of $|H_1(L)|$, we may conclude  \cite[Lemma~3.2]{zentner-simple}  that every representation \[\pi_1(\dcover(J)) \to SO(3)\] has cyclic image.  (To see this,  note that every representation $\pi_1(\dcover(J)) \to SO(3)$ lifts to an $SU(2)$ representation, since the obstruction to lifting belongs to
\[ H^2(\dcover(J);\Z/2\Z) = 0. \]
The image of this lift is abelian by assumption---and, hence, cyclic, since $b_1(\dcover(J)) = 0$---so the projection of this image back down to $SO(3)$ must then be cyclic as well.)  Any such representation therefore factors through $H_1(\dcover(J))$, which is finite of odd order, and so its image also has odd order.

Now given \[\rho: \pi_1(L \setminus N(J)) \to SU(2)\] with $\rho(\mu_J) = i$, we observe that $\ad\rho$ sends $\mu_J^2$ to $\ad(i^2) = \ad(-1) = 1$, hence descends to a representation
\[ \tilde\rho: \pi_1(L\setminus N(J))/\llangle \mu_J^2\rrangle \to SO(3) \]
with $\img(\tilde\rho) = \img(\ad\rho)$.  The  short exact sequence above yields a commutative diagram
\[ \begin{tikzcd}
1 \ar[r] & \pi_1(\dcover(J)) \ar[dr,dashed,"r"'] \ar[r] & \frac{\pi_1(L\setminus N(J))}{\llangle \mu_J^2\rrangle} \ar[d,"\tilde\rho"] \ar[r] & \Z/2\Z \ar[r] & 1 \\
& & SO(3) & &
\end{tikzcd} \]
in which the top row is exact, and   the image of $r$ is an odd-order cyclic subgroup of $SO(3)$.  We will write $\img(r) = \langle x\rangle$, where $x \in SO(3)$ is an element of some odd order $n \geq 1$ and $n$ divides $|H_1(\dcover(J))|$.

Every element of $\pi_1(L\setminus N(J))/\llangle \mu_J^2\rrangle$ has the form
\[ g \text{ or } \alpha \cdot g, \qquad g\in\pi_1(\dcover(J)) \]
where $\alpha$ is some element of the nontrivial coset of the index-2 subgroup $\pi_1(\dcover(J))$.  Thus, if we write $y=\tilde\rho(\alpha)$, then
\[ \img(\tilde\rho) = \langle x\rangle \cup \left(y\cdot\langle x\rangle\right). \]
If $y\in\langle x\rangle$ then $\img(\tilde\rho)$ is cyclic of order $n$, but this is impossible since $\img(\tilde\rho)$ contains the element $\tilde\rho(\mu_J) = \ad(i)$ of order 2.  Thus, $\langle x\rangle \cong \Z/n\Z$ is an index-2 subgroup of $\img(\tilde\rho) \subset SO(3)$, so $\img(\tilde\rho)$ has order $2n$, which is not a multiple of 4.  Now, every finite subgroup of $SO(3)$ is either cyclic, dihedral, tetrahedral, octahedral, or icosahedral, and the latter three have order 12, 24, or 60, so $\img(\tilde\rho)$ must be cyclic or dihedral, of order $2n$.
\end{proof}

\begin{proposition} \label{prop:image-rho}
Suppose  $H_1(L)$ has odd order, that $J \subset L$ is primitive, and that $\dcover(L,J)$ is an $SU(2)$-abelian rational homology sphere.  Let
\[ \rho: \pi_1(L \setminus N(J)) \to SU(2) \]
be a representation satisfying $\rho(\mu_J)=i$.  Then $\img(\rho)$ is either cyclic or binary dihedral, and has order $4n$, where $n$ divides $|H_1(\dcover(L,J))|$.  Moreover, if \[|H_1(\dcover(L,J))| = |H_1(L)|,\] then $\rho$ has cyclic image.
\end{proposition}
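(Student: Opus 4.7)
The plan is to lift the conclusion of Lemma~\ref{lem:image-ad-rho} from $SO(3)$ back up to $SU(2)$, and then to invoke Proposition~\ref{prop:h1-bdc-equals-h1-l} for the final assertion.

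First, I would apply Lemma~\ref{lem:image-ad-rho} to obtain that $\img(\ad\rho)$ is cyclic or dihedral of order $2n$, where $n$ is odd and $n \mid |H_1(\dcover(L,J))|$. Since $\rho(\mu_J) = i$ and $i^2 = -1$, the element $-1$ lies in $\img(\rho)$. Hence the restriction of the double cover $SU(2) \to SO(3)$ to $\img(\rho)$ surjects onto $\img(\ad\rho)$ with kernel $\{\pm 1\}$, giving $|\img(\rho)| = 4n$.

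Next, I would identify $\img(\rho)$ as the full preimage of $\img(\ad\rho)$ inside $SU(2)$ (it is contained in that preimage and has the matching order $4n$). If $\img(\ad\rho)$ is cyclic, then this preimage is generated by a lift of a generator together with the central element $-1$, so it is abelian; any finite abelian subgroup of $SU(2)$ lies in a maximal torus and is therefore cyclic, so $\img(\rho)$ is cyclic of order $4n$. If $\img(\ad\rho)$ is dihedral, then its preimage is by definition the binary dihedral group of order $4n$, and so $\img(\rho)$ is binary dihedral. The divisibility statement $n \mid |H_1(\dcover(L,J))|$ is inherited directly from Lemma~\ref{lem:image-ad-rho}.

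Finally, suppose $|H_1(\dcover(L,J))| = |H_1(L)|$. Whether $\img(\rho)$ is cyclic or binary dihedral, it is conjugate within $SU(2)$ into the standard binary dihedral subgroup $\{e^{i\theta}\} \cup \{e^{i\theta}j\}$ (every finite cyclic subgroup of $SU(2)$ lies in a maximal torus, and every binary dihedral subgroup is conjugate to the standard one). Since having cyclic image is a conjugation-invariant property, we may apply Proposition~\ref{prop:h1-bdc-equals-h1-l} to the conjugated representation to conclude that $\img(\rho)$ is cyclic. The argument is largely bookkeeping once Lemma~\ref{lem:image-ad-rho} and Proposition~\ref{prop:h1-bdc-equals-h1-l} are in hand; the only subtlety is recognizing that $\img(\rho)$ equals the full $SU(2)$-preimage of $\img(\ad\rho)$, which follows cleanly from $-1 \in \img(\rho)$ and the order count.
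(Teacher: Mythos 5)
Your proposal is correct and follows essentially the same route as the paper: apply Lemma~\ref{lem:image-ad-rho}, use $\rho(\mu_J^2)=-1$ to identify $\img(\rho)$ with the full preimage $\ad^{-1}(\img(\ad\rho))$ of order $4n$ (cyclic or binary dihedral accordingly), and then rule out the non-abelian binary dihedral case via Proposition~\ref{prop:h1-bdc-equals-h1-l} when $|H_1(\dcover(L,J))|=|H_1(L)|$. The only difference is that you spell out the conjugation into the standard binary dihedral subgroup and the lifting bookkeeping, which the paper leaves implicit.
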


\begin{proof}
By Lemma~\ref{lem:image-ad-rho}, we know that the image of the composition
\[ \pi_1(L \setminus N(J)) \xrightarrow{\rho} SU(2) \xrightarrow{\ad} SO(3) \]
is either cyclic or dihedral, and has order $2n$, where $n$ divides $|H_1(\dcover(J))|$ and is thus odd.  Note that $\img(\rho) \subset SU(2)$ contains the kernel $\{\pm1\}$ of the map \[\ad: SU(2)\to SO(3),\] since $\rho(\mu_J^2)=-1$. Therefore, \[\img(\rho) = \ad^{-1}(\img(\ad\rho))\] has order $4n$.  If $\img(\ad\rho) \subset SO(3)$ is cyclic then so is $\img(\rho) \subset SU(2)$, whereas if $\img(\ad\rho) \subset SO(3)$ is dihedral then $\img(\rho) \subset SU(2)$ is binary dihedral.

For the last assertion, if $|H_1(\dcover(L,J))| = |H_1(L)|$ then Proposition~\ref{prop:h1-bdc-equals-h1-l} says that $\img(\rho)$ cannot be non-abelian and binary dihedral, so it must be cyclic.
\end{proof}

The instanton knot homology $\KHI(L,J)$ is half of an instanton homology group defined as the Morse homology of a Chern--Simons functional on an associated space of connections. As discussed in  \cite[\S7]{km-excision}, the space of critical points of this functional can be identified with a double cover of the representation variety \[R(J) = \{\rho:\pi_1(L\setminus N(J))\to SU(2) \mid \rho(\mu_J) = i\}.\] If every element of $R(J)$ has cyclic image, then this is a trivial double cover. If in addition each $\rho\in R(J)$ is nondegenerate (corresponds to  nondegenerate critical points of the Chern--Simons functional in a suitable sense), then it follows that \begin{equation}\label{eqn:ineqdim}\dim_\C \KHI(L,J) \leq |R(J)|.\end{equation} %In particular, under the hypotheses of Proposition~\ref{prop:image-rho},  we have \[\dim_\C \KHI(L,J) \leq |H_1(L)|, \] assuming nondegeneracy. 
We would thus like to know when each of the cyclic representations $\rho\in R(J)$ is nondegenerate.
This was addressed in the proof of \cite[Theorem~4.8]{sz-pillowcase}:

\begin{lemma} \label{lem:betti-1-implies-nondegenerate}
Let \[\rho: \pi_1(L \setminus N(J)) \to SU(2)\] be a representation with cyclic image, satisfying $\rho(\mu_J) = i$.  If \[\dim_\R H^1(L\setminus N(J);\ad(\rho))=1,\] then $\rho$ is nondegenerate as a generator of $\KHI(L,J)$.
\end{lemma}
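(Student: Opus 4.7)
The plan is to interpret $\rho$ as an isolated critical point of a Chern--Simons functional associated to $\KHI(L,J)$ and to show that the vanishing required for nondegeneracy is captured exactly by the hypothesis $\dim_\R H^1(L\setminus N(J);\ad\rho) = 1$, after accounting for the one-dimensional stabilizer of a cyclic representation. First I would recall that $\KHI(L,J)$ is defined, as in \cite{km-excision}, via a closure of the knot exterior with a meridional sutured structure, and that the critical points of the relevant Chern--Simons functional correspond to flat connections whose restrictions to $L\setminus N(J)$ are representations in $R(J)$, i.e.\ those sending $\mu_J$ to $i$. A critical point is nondegenerate precisely when the first cohomology of its deformation complex vanishes, with the complex computed relative to the boundary condition $\rho(\mu_J)=i$ and modulo the infinitesimal stabilizer action.

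The key idea is to use the cyclic image hypothesis to split $\ad\rho$. Since $\img(\rho)$ sits inside a maximal torus $T\subset SU(2)$, we can decompose $\mathfrak{su}(2) = \R \oplus \C_\chi$ as $T$-representations, where $\R$ is the Lie algebra of $T$ (on which $\ad\rho$ acts trivially) and $\C_\chi$ is the orthogonal complement, on which $\ad\rho$ acts through a character $\chi: \pi_1(L\setminus N(J)) \to U(1)$. This produces a direct sum decomposition
\[
H^\ast(L\setminus N(J);\ad\rho) \cong H^\ast(L\setminus N(J);\R) \oplus H^\ast(L\setminus N(J);\C_\chi).
\]
Because $H_1(L\setminus N(J))\cong\Z$, the first summand contributes a one-dimensional $H^1$, corresponding geometrically to the family of cyclic deformations of $\rho$ along the maximal torus $T$ that change the value of $\rho(\mu_J)$. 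The hypothesis $\dim_\R H^1(L\setminus N(J);\ad\rho) = 1$ therefore forces $H^1(L\setminus N(J);\C_\chi) = 0$.

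Next I would verify that this forced vanishing is precisely what nondegeneracy requires. The one-dimensional $\R$-summand of $H^1$ is \emph{not} an obstruction to nondegeneracy: variations of $\rho$ in the $T$-direction either violate the pinned meridian condition $\rho(\mu_J)=i$ (and so do not give tangent vectors to the critical locus at all) or are absorbed by the one-dimensional stabilizer $T$ of the reducible representation $\rho$. Only the twisted $\C_\chi$-summand can supply honest deformations of $\rho$ as a critical point of the closure's Chern--Simons functional, and we have just seen that this summand vanishes. Hence the deformation space at $\rho$ is trivial and $\rho$ is nondegenerate.

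The main obstacle I expect is the careful bookkeeping that identifies the deformation complex for the Chern--Simons critical point on the closure with the twisted cohomology of the knot exterior, correctly handling both the stabilizer quotient and the pillowcase-type boundary constraint so as to justify exactly this counting. This is precisely what is carried out in the proof of \cite[Theorem~4.8]{sz-pillowcase}, and the argument there transfers verbatim, so the task reduces to importing that analytic framework rather than producing a new one.
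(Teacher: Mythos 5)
Your proposal is correct and ultimately coincides with the paper's own proof: the paper simply observes that the argument of \cite[Theorem~4.8]{sz-pillowcase} applies verbatim in the unperturbed setting with the condition $\rho(\mu_J)=i$, which is exactly where your proposal lands after its (accurate) heuristic splitting of $\ad\rho$ into the trivial summand and the twisted summand $\C_\chi$. The extra decomposition discussion is fine motivation but not a different route, since the substantive bookkeeping is still deferred to the same cited theorem.
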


\begin{proof}
The proof of \cite[Theorem~4.8]{sz-pillowcase} establishes this for knots in $S^3$, in the further situation where we have introduced some holonomy perturbation $\Phi$ and replaced the condition $\rho(\mu_J)=i$ with a more general condition described by some curve $C'$ in the pillowcase orbifold (i.e., the $SU(2)$ character variety of $T^2$).  The current setting is simpler, though: we take $\Phi=0$ and the original condition $\rho(\mu_J)=i$, and then the proof works verbatim.
\end{proof}

\begin{proposition} \label{prop:branched-cover-nondegenerate}
Suppose  $H_1(L)$ has odd order and   $J \subset L$ is  primitive.  Let
\[ \rho: \pi_1(L \setminus N(J)) \to SU(2) \]
be a representation with cyclic image, satisfying $\rho(\mu_J) = i$.  If the $2|H_1(L)|$-fold cyclic cover of $L \setminus N(J)$ has first Betti number 1, then $\rho$ is nondegenerate as a generator of $\KHI(L,J)$.
\end{proposition}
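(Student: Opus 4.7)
The plan is to deduce this from Lemma~\ref{lem:betti-1-implies-nondegenerate}: it suffices to show that $\dim_\R H^1(L \setminus N(J);\ad\rho) = 1$.  Abbreviate $X = L \setminus N(J)$ and $N = |H_1(L)|$, so $H_1(X) \cong \Z$ is generated by the class of the peripheral curve $\mu_C$ introduced in the preamble to this section, and $\rho(\mu_C) = \zeta$ is a root of unity with $\zeta^{N} = i$.  Since $\ad$ carries the cyclic subgroup $\langle \zeta\rangle \subset SU(2)$ to rotations of $\mathfrak{su}(2) \cong \R^3$ fixing the $i$-axis, the complexification decomposes as
\[ \ad\rho \otimes_\R \C \;\cong\; \underline{\C} \,\oplus\, \C_{2\theta} \,\oplus\, \C_{-2\theta}, \]
where $\zeta = e^{i\theta}$ and $\C_{\pm 2\theta}$ denote the one-dimensional characters of $\pi_1(X)$ sending $\mu_C$ to $e^{\pm 2i\theta}$.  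Passing to cohomology gives
\[ H^1(X;\ad\rho)\otimes_\R \C \;\cong\; H^1(X;\C) \,\oplus\, H^1(X;\C_{2\theta}) \,\oplus\, H^1(X;\C_{-2\theta}). \]
The trivial summand contributes exactly $\C$ since $H_1(X) \cong \Z$, so the task reduces to showing that both twisted summands vanish.

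For this, observe that $(\zeta^{\pm 2})^{N} = i^{\pm 2} = -1$, so $e^{\pm 2i\theta}$ has order dividing $2N$, and both characters $\C_{\pm 2\theta}$ factor through the quotient $\pi_1(X) \twoheadrightarrow H_1(X) \to \Z/2N\Z$.  Let $\widetilde X \to X$ denote the $2N$-fold cyclic cover arising from this quotient, on which the deck group $\Z/2N\Z$ acts.  The standard eigenspace decomposition reads
\[ H^1(\widetilde X;\C) \;\cong\; \bigoplus_{k=0}^{2N-1} H^1(X;\C_k), \]
where $\C_k$ is the character sending $\mu_C$ to $e^{\pi i k/N}$.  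The hypothesis $b_1(\widetilde X) = 1$ forces the left-hand side to be one-dimensional; since the $k=0$ summand already accounts for this dimension, each $H^1(X;\C_k)$ with $k \neq 0$ must vanish.  In particular, $H^1(X;\C_{\pm 2\theta}) = 0$ as soon as $\C_{\pm 2\theta}$ are nontrivial characters.  But triviality would mean $\zeta^{2} = 1$, forcing $\zeta = \pm 1$ and hence $\rho(\mu_J) = \zeta^{N} = \pm 1$, contradicting $\rho(\mu_J) = i$.

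There is no serious obstacle in this plan beyond the small bit of bookkeeping in the previous paragraph: one must track that the relevant characters $\C_{\pm 2\theta}$ are honestly nontrivial and factor through the specific cyclic quotient whose cover appears in the hypothesis.  Combining the vanishing above with the one-dimensional trivial summand yields $\dim_\R H^1(X;\ad\rho) = 1$, and Lemma~\ref{lem:betti-1-implies-nondegenerate} then delivers the nondegeneracy of $\rho$ as a generator of $\KHI(L,J)$.
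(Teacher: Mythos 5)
Your argument is correct, and it reaches the same intermediate goal as the paper---showing $\dim_\R H^1(L\setminus N(J);\ad\rho)=1$ and then invoking Lemma~\ref{lem:betti-1-implies-nondegenerate}---but by a genuinely different route. The paper quotes a theorem of Boyer--Nicas, which expresses $\dim H^1(L\setminus N(J);\ad\rho)$ as $b_1(L\setminus N(J))$ plus a M\"obius-weighted sum of the Betti numbers $b_1(Y_d)$ of the cyclic covers $Y_d$ associated to $\img(\ad\rho)$, and then kills that sum by showing $b_1(Y_d)=1$ for every $d\mid n$ via transfer, using the hypothesis on the $2|H_1(L)|$-fold cover. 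You instead decompose the coefficient system directly: since $\rho$ has cyclic image and factors through $H_1\cong\Z$, the complexification $\ad\rho\otimes_\R\C$ splits as the trivial character plus the two characters sending $\mu_C\mapsto\zeta^{\pm2}$, and because $\zeta^N=i$ forces $\zeta^{\pm2}$ to be nontrivial $2N$-th roots of unity, these twisted summands sit inside the character-eigenspace decomposition of $H^1$ of the $2N$-fold cyclic cover, where the hypothesis $b_1=1$ (already accounted for by the trivial character) forces them to vanish. This is more elementary and self-contained---it replaces the Boyer--Nicas formula and the M\"obius bookkeeping with Maschke-type eigenspace splitting for a finite cyclic cover---at the cost of using explicitly that $\rho$ is abelian, which the Boyer--Nicas formula does not require (though in the proposition at hand $\rho$ is assumed cyclic anyway, so nothing is lost). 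The small checks you flag (nontriviality of $\C_{\pm2\theta}$, and that they factor through the specific $\Z/2N\Z$ quotient) are handled correctly.
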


\begin{proof}
Let $C\subset Z$ be as above, so  that $[\mu_C]$ generates $H_1(L\setminus N(J)) \cong \Z$ and \[[\mu_J] = |H_1(L)|\cdot [\mu_C].\]  Since $\rho$ has cyclic image, it follows that $\ad\rho$ has cyclic image as well and therefore factors through $H_1(L\setminus N(J))$.  Moreover, $\img(\ad\rho)$ has order $n=2r$ for some divisor $r$ of $|H_1(L)|$, because it sends $|H_1(L)| \cdot [\mu_C] = [\mu_J]$ to the order-2 element $\ad(i)$.

For each divisor $d$ of $n$, let $Y_d$ be the $d$-fold cyclic cover of $L \setminus N(J)$ corresponding to the index-$d$ subgroup
\[ \ker\left(\pi_1(L\setminus N(J)) \xrightarrow{\ad\rho} SO(3) \xrightarrow{x \mapsto x^{n/d}} SO(3)\right). \]
Boyer and Nicas \cite[Theorem~1.1 and Remark~1.2]{boyer-nicas} proved that
\[ \dim H^1(L \setminus N(J); \ad\rho) = \dim H^1(L\setminus N(J)) + \frac{2}{\varphi(n)} \sum_{d\mid n} \mu\left(\frac{n}{d}\right) b_1(Y_d), \]
where $\mu$ denotes the M\"obius function.  If we can show that $b_1(Y_d) = 1$ for all $d \mid n$, then the sum on the right will be
\[ \sum_{d \mid n} \mu\left(\frac{n}{d}\right) = 0 \]
since $n \geq 2$, and so we will have $\dim H^1(L\setminus N(J); \ad\rho) = 1$ as desired.

Note for each $d \mid n$ that $Y_n$ is a finite cover of $Y_d$, which in turn is a finite cover of $L \setminus N(J)$.  It follows by a transfer argument that their first Betti numbers satisfy
\[ b_1(Y_n) \geq b_1(Y_d) \geq b_1(L\setminus N(J)) = 1, \]
so if $b_1(Y_n) = 1$ then we will have $b_1(Y_d)=1$ for all $d\mid n$ after all.  Similarly, since $n$ divides $2|H_1(L)|$, we conclude that if the $2|H_1(L)|$-fold cyclic cover of $L\setminus N(J)$ has $b_1=1$ then we also have $b_1(Y_n)=1$, and hence $\rho$ is nondegenerate.
\end{proof}

We now put together the various results of this section to prove Theorem \ref{thm:floer-simple}.

\begin{proof}[Proof of Theorem~\ref{thm:floer-simple}]
Proposition~\ref{prop:image-rho} says that every representation
\[ \rho: \pi_1(L \setminus N(J)) \to SU(2) \]
with $\rho(\mu_J) = i$ has  cyclic image.  By Proposition~\ref{prop:h1-bdc-equals-h1-l} there are $|H_1(L)|$ such $\rho$. Thus, if they are all nondegenerate, then we will have
\[ \dim_\C \KHI(L,J) \leq |R(J)|=|H_1(L)|, \] as in \eqref{eqn:ineqdim}.
In the opposite direction, we know that
\[  |H_1(L)|\leq \dim_\C I^\#(L) \leq \dim_\C \KHI(L,J) \]
by \cite[Theorem~1.2]{li-ye-sutures}, so equality will follow.  It remains to establish the nondegeneracy of these cyclic representations; by Proposition~\ref{prop:branched-cover-nondegenerate} it will suffice to show that the $2|H_1(L)|$-fold cyclic cover of $L\setminus N(J)$ has first Betti number $1$.

Let $Y$ be the $n$-fold cyclic cover of $L\setminus N(J) \cong Z \setminus N(C)$, where $n=2|H_1(L)|$.  We can Dehn fill $Y$ along a lift $\tilde\mu$ of $\mu_C^n$ to get the $n$-fold branched cyclic cover of $C$, and this Dehn filling decreases the first Betti number by $1$ since $[\tilde\mu]$ is a non-torsion element of $H_1(Y)$.  (Indeed, we can lift a Seifert surface for $C$ to $Y$ to get a surface whose intersection pairing with $\tilde\mu$ is $1$.)  It thus follows that
\[ b_1(Y)=1 \text{ if and only if } b_1(\Sigma_n(Z,C)) = 0. \]
But a theorem of Fox \cite[Theorem~8.21]{burde-zieschang} says that
\[ |H_1(\Sigma_n(Z,C))| = \left|\prod_{k=1}^{n-1} \Delta_C(e^{2\pi i k/n})\right|, \] where $\Sigma_n(Z,C)$ is the $n$-fold cyclic branched cover of $C\subset Z$.
It follows that $b_1(\Sigma_n(Z,C))=0$ if and only if no nontrivial $n$th root of unity is a zero of the Alexander polynomial $\Delta_C(t)$.

We now wish to show that the cyclotomic polynomials $\Phi_d(t)$ do not divide $\Delta_C(t)$ for all divisors $d \geq 2$ of $n=2|H_1(L)|$.  If $|H_1(L)|$ is an odd prime power, say $p^e$, then either
\begin{itemize}
\item $d=2$, and $\Phi_2(t)=1+t$ implies that $\Phi_d(1) = 2$;
\item $d=p^k$ with $1 \leq k \leq e$, and then
\[ \Phi_{p^k}(t) = 1 + t^{p^k} + t^{2\cdot p^k} + \dots + t^{(p-1)\cdot p^k} \]
implies that $\Phi_d(1) = p$; or
\item $d=2\cdot p^k$ with $1 \leq k \leq e$, and
\[ \Phi_{2\cdot p^k}(t) = \Phi_{p^k}(-t) = 1 - t^{p^k} + t^{2\cdot p^k} - \dots + t^{(p-1)\cdot p^k} \]
implies that $\Phi_d(-1) = p$.
\end{itemize}
Thus, if some $\Phi_d(t)$ divides $\Delta_C(t)$ then at least one of $|\Delta_C(1)|$ and $|\Delta_C(-1)|$ will be strictly greater than $1$.  But we know that $\Delta_C(1)=1$ for all knots in homology spheres, and we know from Lemma~\ref{lem:h1-bdc-alex}
that
\[ |\Delta_C(-1)| = |H_1(\dcover(Z,C))| = 1, \]
so we cannot have $\Phi_d(t) \mid \Delta_C(t)$ for any divisor $d \geq 2$ of $2p^e$.  We conclude that
\[ b_1(\Sigma_n(Z,C)) = 0, \]
proving the required nondegeneracy.
\end{proof}

\section{Floer simple knots  and  Theorem \ref{thm:main}}\label{sec:l32}

Recall that a knot in a lens space $L$ is \emph{simple} if it is isotopic to a union of two arcs, each contained in a meridional disk for one of the two solid tori in a genus-1 Heegaard splitting of $L$. For instance, the core of each solid torus in such a splitting is a simple knot. Moreover, there is a unique (oriented) simple knot in every homology class in $H_1(L)$. See \cite[\S2.1]{rasmussen-lens-space-surgeries} and Section~\ref{sec:other} for more on simple knots.%Moreover, a simple knot has the minimal rational genus among knots in its homology class.

Our main result in this section is Theorem \ref{thm:classify-simple-knots} below. This theorem is substantially more general than is needed for the proof of Theorem \ref{thm:main}, but we will use this generality in the next section to extend the latter theorem to additional slopes in the interval $[3,5)$.%; that is, primitive knots $J\subset L$ satisfying \[\dim_\C \KHI(L,J) =  |H_1(L)|=3.\]

\begin{theorem} \label{thm:classify-simple-knots}
Suppose $J$ is a primitive (oriented) knot in a lens space $L$ such that
\[\dim_\C \KHI(L,J)=|H_1(L)|.\]
Then $J$ is rationally fibered of the same genus as the simple knot $S$ in its homology class. Moreover, the symmetrized Alexander polynomials of $J$ and $S$ agree, \[\Delta_J(t) = \Delta_S(t). \] Finally, if \[g(J)  \leq \frac{|H_1(L)|+1}{4}\] then $J$ is isotopic to $S$.%\footnote{Technically, we should orient $J$ for this to make sense, but $g(J)$ does not depend on its orientation.}
\end{theorem}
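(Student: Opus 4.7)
The plan is to extract strong graded information about $\KHI(L,J)$ from the hypothesis, and then transport it to Heegaard knot Floer homology via the Li--Ye Euler characteristic identity for comparison with the simple knot $S$.

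First, I would combine $\dim_\C \KHI(L,J) = |H_1(L)|$ with the general bound $\dim_\C \KHI(L,J) \geq \dim_\C \Is(L) = |H_1(L)|$ (which uses that the lens space $L$ is an instanton L-space) to force equality throughout. Decomposing $\KHI(L,J)$ by the Alexander grading coming from a minimal-genus rational Seifert surface, this forces each graded summand to be at most one-dimensional. An instanton analogue of Ni's fiberedness theorem then implies that $J$ is rationally fibered, with $g(J)$ equal to the top Alexander grading supporting $\KHI(L,J)$.

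Next, I would apply the Li--Ye identity $\chigr(\KHI(L,J)) = \pm \chigr(\HFK(L,J))$ together with the standard identification of $\chigr(\HFK(L,J))$ with the symmetrized Alexander polynomial $\Delta_J(t)$, in order to translate the graded rank information into an Alexander polynomial statement. Since each summand of $\KHI(L,J)$ is zero- or one-dimensional, $\Delta_J(t)$ takes the form of a signed sum of monomials $t^i$ with coefficients in $\{0,\pm 1\}$, and is therefore determined by its support. The same analysis applied to the simple knot $S$, whose knot Floer homology is one-dimensional in each relative Spin$^c$ grading by a classical computation, gives $\Delta_S(t)$ of the same shape. To match $\Delta_J$ with $\Delta_S$, I would use that the Floer simplicity pins down the absolute Alexander grading of each generator via the rational fiberedness and the common homology class $[J]=[S]\in H_1(L)$, which simultaneously forces $g(J) = g(S)$.

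For the isotopy statement, under $g(J) \leq \frac{|H_1(L)|+1}{4}$ I would invoke a uniqueness theorem for rationally fibered, instanton Floer simple knots of small rational genus in lens spaces within a fixed homology class. Concretely, the bound is calibrated so that, after lifting the rational Seifert surface to a suitable cyclic branched cover, one obtains a surface of small enough ordinary genus that a Berge--Gabai--type classification (or a direct analysis of the fibered complement as a Seifert-fibered piece plus solid torus) forces the fibered structure on $L \setminus N(J)$ to coincide with that on $L \setminus N(S)$, giving $J \cong S$. The main obstacle I anticipate is this last step, since it requires geometric input beyond what the Floer-theoretic data directly provides; careful bookkeeping of Alexander-grading conventions and of the relationship between rational genus and ordinary genus in the cyclic cover will also be needed to align $\KHI$, $\HFK$, and the topological quantities.
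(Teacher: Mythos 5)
Your first step is essentially the paper's, but be careful: the total dimension $\dim_\C\KHI(L,J)=|H_1(L)|$ alone does not force each Alexander-graded summand to be at most one-dimensional. You need the per-residue-class constraint coming from the spectral sequence $\bigoplus_{m\equiv i}\HFK(L,J,m)\Rightarrow \HFhat(L,[i])$, whose Euler characteristic is $1$ for each $[i]\in\Z/p\Z$; combined with the Li--Ye identity this forces exactly one one-dimensional summand in each residue class, and that is what yields fiberedness via the top grading. Also, $\chigr(\HFK(L,J))$ is not $\Delta_J(t)$ itself but $\Delta_J(t)\cdot\frac{t^{p/2}-t^{-p/2}}{t^{1/2}-t^{-1/2}}$ (Rasmussen), a normalization you need later.

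The genuine gap is in your matching of $\Delta_J$ with $\Delta_S$ and the deduction $g(J)=g(S)$. Knowing $[J]=[S]\in H_1(L)$ pins down the Alexander gradings of the generators only \emph{modulo} $p$ (via $\spc(L,J)\to\spc(L)$), and rational fiberedness places the top generator at $g(J)+\frac{p-1}{2}$ with $g(J)$ still unknown; so "Floer simplicity plus the common homology class" does not determine the absolute gradings, and a priori the exponents of $\chigr(\HFK(L,J))$ could differ from those of $\chigr(\HFK(L,S))$ by arbitrary multiples of $p$ within each residue class. The missing ingredient is Rasmussen's result that for homologous primitive knots the difference $\chigr(\HFK(L,J))-\chigr(\HFK(L,S))$ is divisible by $(t^p-1)^2$; an elementary root-counting argument then kills all those multiples of $p$, giving equality of the graded Euler characteristics, hence equal genera and equal Alexander polynomials. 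Your final step is also not established as proposed: there is no off-the-shelf uniqueness theorem for small-genus fibered Floer simple knots, and the complement of a general simple knot need not be Seifert fibered. The paper instead chains three results: for $g(J)\le\frac{p+1}{4}<\frac{p+1}{2}$, Rasmussen's bound shows $J$ is Heegaard Floer simple; Baker's theorem shows any knot with $g(J)\le\frac{p+1}{4}$ is $1$-bridge; and Hedden's result shows a $1$-bridge, Heegaard Floer simple knot in a lens space is simple, whence $J\cong S$. Without substitutes for the $(t^p-1)^2$ divisibility and for this Baker--Hedden chain, the proposal does not close.
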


Before proving this theorem, we use it to prove our main result, Theorem \ref{thm:main}.

\begin{proof}[Proof of Theorem \ref{thm:main}] Suppose, for a contradiction, that $S^3_3(K)$ is $SU(2)$-abelian for some nontrivial knot $K$.  Proposition \ref{prop:surgery-cover} implies that this surgered manifold is homeomorphic to a branched double cover of the lens space \[L=L(3,2)\] along a knot $J$ which is homologous to 5 times a core in a genus-1 Heegaard splitting of $L$. In particular,  the homology class of $J$ (with any orientation) generates \[H_1(L)\cong \Z/3\Z\] since $\gcd(3,5) = 1,$ meaning that $J\subset L$ is primitive. Its branched double cover is therefore unique, by Lemma \ref{lem:h1-bdc}, so we may write \[S^3_3(K) \cong \Sigma(L,J).\] Since \[|H_1(\Sigma(L,J))| = |H_1(L)| = 3,\] Theorem \ref{thm:floer-simple} implies that  \[\dim_\C \KHI(L,J)=|H_1(L)|=3.\] Then Theorem \ref{thm:classify-simple-knots} says that $J$ is rationally fibered and has the same genus as the simple knot in its homology class (for any orientation on $J$). But each primitive simple
knot in $L$ is homologous to, and hence isotopic to, a core of a solid torus in a  genus-1 Heegaard splitting of $L$, and therefore  has  genus 0. It follows that $J$ has genus 0 as well. The fibered exterior of $J$ is then a solid torus, which implies that its branched double cover $\dcover(L,J)$ is a lens space. But Proposition \ref{prop:surgery-cover} says that $S^3_3(K)$ is not Seifert fibered, a contradiction.\end{proof}

It remains  to prove Theorem \ref{thm:classify-simple-knots}. Before doing so, we review some facts about instanton knot homology and Heegaard knot Floer homology.
We will assume for the discussion below that $J$ is a primitive knot in an irreducible rational homology sphere $L$ with $|H_1(L)|=p > 1$. In particular, the complement of $J$ is irreducible.

As in \S\ref{sec:floer-simple}, we can identify $L\setminus N(J)$ with $Z \setminus N(C)$ for a knot $C$ in some homology sphere $Z$. %In particular, the complement of $J$ is irreducible.
Let
\[ (F,\partial F) \subset (L\setminus N(J), \partial N(J)) \]
be a genus-minimizing Seifert surface for $C$, so that $F$ generates $H_2(L\setminus N(J), \partial N(J))$.  Then $\partial F$ is a Seifert longitude for $C$. Since \[[\mu_J]  = |H_1(L)|\cdot [\mu_C] = p[\mu_C] \in H_1(L\setminus N(J)) \] (see \S\ref{sec:floer-simple}), $\partial F$ represents a primitive class on $\partial N(J)$, and has algebraic intersection number $\pm p$ with the meridians of $J$.

Let $\gamma_{\mu}$ be the disjoint union of  two oppositely-oriented meridians of $J$ on $\partial N(J)$. Then $(L\setminus N(J),\gamma_{\mu})$ is a balanced sutured manifold, and its sutured instanton homology  defines the instanton knot homology of $J\subset L$,   \[ \KHI(L,J):=\SHI(L\setminus N(J),\gamma_{\mu}), \] as in \cite[Definition 7.13]{km-excision}. Let us arrange that $\partial F$ intersects $\gamma_{\mu}$ in $2p$ points. Then the construction in  \cite[\S 2.3]{li-ye-Euler} defines an integer-valued Alexander grading on $\KHI(L,J)$,\[\KHI(L,J) = \bigoplus_{i \in \Z} \KHI(L,J,i).\] We will review below how this Alexander grading detects genus and fiberedness, and yields a graded Euler characteristic which agrees with that of Heegaard knot Floer homology.

Let \[n := \frac{1}{2}(p-\chi(F)) = g(F) + \frac{p-1}{2}. \] Then \cite[Theorem~2.30]{li-ye-Euler} says that  \[ \KHI(L,J,i)=0 \text{ for }|i|>n,\] and that if $(M_\pm,\gamma_\pm)$  are the sutured manifolds obtained by decomposing $(L\setminus N(J),\gamma_{\mu})$ along $\pm F$, then \[ \KHI(L,J,\pm n) \cong \SHI(M_\pm,\gamma_\pm).\] Since $L\setminus N(J)$ is irreducible with toroidal boundary, and $F$ has minimal genus and minimal intersection with $\gamma_{\mu}$, the sutured manifolds $(M_\pm,\gamma_\pm)$  are both taut.  Their sutured instanton homologies are therefore nontrivial \cite[Theorem~7.12]{km-excision}; that is, \[\KHI(L,J,\pm n) \neq 0.\] In this way, instanton knot homology detects the genus of $J$. Note, moreover, that $M_+ = M_-$ and $\gamma_+ = -\gamma_-$. This implies by \cite[Theorem~2.30]{li-ye-Euler} that the sutured instanton homologies of $(M_\pm,\gamma_\pm)$ are isomorphic, \[\KHI(L,J, n) \cong \KHI(L,J, -n).\]
The fact that \[H_2(L \setminus N(J)) \cong H_2(Z \setminus N(C)) =0\] implies that 
$ H_2(M_\pm)  = 0 $ as well. We can thus apply \cite[Theorem~1.2]{gl-decomposition} to conclude that \[\dim_\C\KHI(L,J,\pm n) = 1\] iff $(M_\pm,\gamma_\pm)$ are product sutured manifolds, or, equivalently, iff $J$ is a rationally fibered knot with fiber surface $F$. In this way, instanton knot homology detects whether $J$ is rationally fibered. Heegaard knot Floer homology detects genus and fiberedness in  the same way.

There is a $\Z/2\Z$-grading on instanton knot homology which descends to a $\Z/2\Z$-grading on each Alexander-graded summand $\KHI(L,J,i)$,
so we can define the graded Euler characteristic of $\KHI(L,J)$ by
\[ \chigr(\KHI(L,J)) = \sum_{i\in\mathbb{Z}}\chi(\KHI(L,J,i))\cdot t^i. \]
Recent work of Li and Ye  \cite[Theorem 1.2]{li-ye-Euler} relates this with the graded Euler characteristic of Heegaard knot Floer homology. Namely, they prove that \[ \chigr(\KHI(L,J)) = \pm t^s \cdot \chigr(\HFK(L,J)),\] % \pm t^{n+s}\pm t^{l+s}\pm t^{-n+s}, \] 
for some $s \in \Z$, where the latter is defined with respect to the corresponding  Alexander and $\Z/2\Z$-gradings in the Heegaard  Floer setting. 

Recall that  \begin{equation}\label{eqn:positivity} \chigr(\HFK(L,J))_{t=1}= \chi(\HFhat(L)) = |H_1(L)|=p.\end{equation}
Recall further that the Alexander grading on the Heegaard knot Floer homology of $J$ can be viewed as a grading by   relative $\spc$ structures in $\spc(L,J)$. In the spectral sequence  \[\HFK(L,J)\implies \HFhat(L),\] this grading projects to the $\spc$ grading on the latter via the natural map \[\spc(L,J)\to \spc(L).\] These sets of $\spc$ structures are affine spaces over the corresponding first homology groups, and the projection map above can be identified with the  quotient map \[H_1(L\setminus N(J))\,\cong\, \Z \to \Z/p\Z\, \cong \,H_1(L).\] More concretely, there is an identification $\spc(L)\cong \Z/p\Z$ such that the spectral sequence above restricts, for each $[i]\in \Z/p\Z$, to a spectral sequence  \begin{equation}\label{eqn:hfk}\bigoplus_{m\equiv i\!\!\!\!\!\pmod{p}}\HFK(L,J,m)\implies \HFhat(L,[i]).\end{equation}  In particular, for each $[i]\in\Z/p\Z$, we have  \begin{equation}\label{eqn:euler-1}\sum_{m\equiv i\!\!\!\!\!\pmod{p}}\chi(\HFK(L,J,m))= \chi(\HFhat(L,[i]))=1.\end{equation}
The  following additional constraints  on the graded Euler characteristic of Heegaard knot Floer homology will be important in our proof of Theorem \ref{thm:classify-simple-knots}. 

The first lemma below is proved in \cite[Corollary 5.3]{rasmussen-lens-space-surgeries}.%Its proof is essentially contained in \cite[Proof of Theorem 2]{rasmussen-lens-space-surgeries}.%The graded Euler characteristic of knot Floer homology is symmetric under the substitution $t\mapsto t^{-1}$ \cite[\S 3.7]{rasmussen-lens-space-surgeries}. It follows that if \[\dim_\C\KHI(L,J,\pm n) = 1, \] then $s=0$, and therefore $\chigr(\KHI(L,J))$ is symmetric as well.  (One can prove that $\KHI(L,J)$ and thus its graded Euler characteristic is always symmetric with respect to the Alexander grading, without the condition above, though we will not do that here.)
\begin{lemma}
\label{lem:difference}
Let $J_1$ and $J_2$ be two homologous primitive  knots in $L$.  Then 
the difference \[\chigr(\HFK(L,J_1))-\chigr(\HFK(L,J_2))\] is divisible by $(t^p-1)^2$.
\end{lemma}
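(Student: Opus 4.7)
The plan is to reformulate divisibility by $(t^p-1)^2$ analytically: for a Laurent polynomial $P(t)\in\Z[t^{\pm 1}]$, $(t^p-1)^2$ divides $P(t)$ if and only if $P(\zeta)=P'(\zeta)=0$ for every $p$-th root of unity $\zeta$. Writing $P(t)=\sum_m c_m t^m$, discrete Fourier analysis on $\Z/p\Z$ shows this is equivalent to the vanishing, for every residue class $[i]\in\Z/p\Z$, of both
\[
T_i(P):=\sum_{m\equiv i\!\!\!\!\!\pmod{p}} c_m
\qquad\text{and}\qquad
M_i(P):=\sum_{m\equiv i\!\!\!\!\!\pmod{p}} m\,c_m.
\]
Setting $P(t):=\chigr(\HFK(L,J_1))-\chigr(\HFK(L,J_2))$, it therefore suffices to verify that $T_i(P)=M_i(P)=0$ for every $[i]$.

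The vanishing of $T_i(P)$ is immediate from \eqref{eqn:euler-1}: for $k=1,2$ the sum $\sum_{m\equiv i\!\!\!\!\!\pmod{p}}\chi(\HFK(L,J_k,m))$ equals $\chi(\HFhat(L,[i]))=1$, independent of the knot, so the difference is zero. For $M_i(P)$ I would invoke the identification of $\chigr(\HFK(L,J))$, up to sign and a power of $t$, with a normalization of the Turaev torsion $\tau(L\setminus N(J))\in\Z[t^{\pm 1}]$: for primitive $J$ the complement has $H_1\cong\Z$, and $(1-t)\cdot\chigr(\HFK(L,J))$ equals $\pm t^s\,\tau(L\setminus N(J))$ with the shift $s$ determined by the canonical Euler structure attached to $[J]$ and the Seifert longitude. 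For two primitive homologous knots, Turaev's surgery formula together with the canonicity of this Euler structure forces the difference of the two torsions to be divisible by $(t^p-1)^2$, and after dividing out the overall factor of $1-t$ this yields $M_i(P)=0$ as well.

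The main obstacle is the careful calibration between the Heegaard Floer Alexander grading and Turaev's canonical Euler structure, so that the torsion equality transfers cleanly into a full $(t^p-1)^2$-divisibility on the $\chigr(\HFK)$ side. This matching is precisely what is done in the proof of \cite[Corollary~5.3]{rasmussen-lens-space-surgeries}, which I would follow; an alternative self-contained strategy would be to compare both $J_1$ and $J_2$ to a fixed representative of their common homology class (for instance the simple knot, in the lens-space case) via elementary homology-preserving moves, checking at each step that $\chigr(\HFK)$ changes only by a multiple of $(t^p-1)^2$.
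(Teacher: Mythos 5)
Your reduction to moment conditions is fine: $(t^p-1)^2$ divides a Laurent polynomial $P(t)=\sum_m c_m t^m$ exactly when, for every class $[i]\in\Z/p\Z$, both $\sum_{m\equiv i} c_m$ and $\sum_{m\equiv i} m\,c_m$ (sums over $m\equiv i \pmod{p}$) vanish, and the vanishing of the zeroth moments for $P=\chigr(\HFK(L,J_1))-\chigr(\HFK(L,J_2))$ does follow from \eqref{eqn:euler-1}, since $\chi(\HFhat(L,[i]))=1$ for every $\spc$ structure on the rational homology sphere $L$, independently of the knot. But this only yields divisibility by $t^p-1$; the entire content of the lemma is the first-moment condition, and your proposal does not prove it. The sentence asserting that Turaev's surgery formula together with the canonicity of the Euler structure ``forces'' the difference of the two torsions to be divisible by $(t^p-1)^2$ is precisely the statement of the lemma rewritten in torsion language, with no argument supplied, and you then defer to the proof of \cite[Corollary~5.3]{rasmussen-lens-space-surgeries}. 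Since the paper gives no independent proof either---it simply cites that corollary---resting on Rasmussen is legitimate; but then your proposal amounts to that citation plus the easy zeroth-moment observation, and the torsion paragraph should not be presented as the substance of an argument.

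Two further cautions. First, your stated normalization is off: with the conventions of \S\ref{sec:l32}, $\chigr(\HFK(L,J))=\Delta_J(t)\cdot\frac{t^{p/2}-t^{-p/2}}{t^{1/2}-t^{-1/2}}$, while the Milnor--Turaev torsion of the complement (which has $H_1\cong\Z$ since $J$ is primitive) is $\Delta_J(t)/(t-1)$ up to sign and powers of $t$; hence $\chigr(\HFK(L,J))$ agrees with $(t^p-1)\,\tau(L\setminus N(J))$ up to units, not with $\tau/(1-t)$ as you wrote. Pinning down this calibration, including the grading shift determined by the homology class, is exactly where the hypothesis that $J_1$ and $J_2$ are homologous enters, so it cannot be waved through. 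Second, the ``alternative self-contained strategy'' of connecting $J_1$ and $J_2$ by homology-preserving moves is not an argument as it stands: you would need to exhibit a generating set of such moves and verify that each changes $\chigr(\HFK)$ by a multiple of $(t^p-1)^2$, and no such toolkit is available more cheaply than the torsion computation you are trying to bypass.
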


The next lemma follows immediately from \eqref{eqn:euler-1}.

\begin{lemma}\label{lem:euler}
Let $J$ be a primitive knot in $L$. Then, for each $[i]\in \Z/p\Z$,  we have that
\[\chi(\HFK(L,J,m)) \neq 0 \] for some integer $m\equiv i\pmod{p}$.
\end{lemma}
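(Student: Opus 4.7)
The plan is to observe that the lemma is a one-line pigeonhole consequence of the displayed equation \eqref{eqn:euler-1}, which is already available to us. That equation states that for each residue class $[i] \in \Z/p\Z$,
\[ \sum_{m \equiv i \pmod{p}} \chi(\HFK(L,J,m)) \;=\; \chi(\HFhat(L,[i])) \;=\; 1. \]
Since the right-hand side is nonzero, at least one summand on the left must itself be nonzero; that is, there exists an integer $m \equiv i \pmod{p}$ with $\chi(\HFK(L,J,m)) \neq 0$, which is precisely the claim. Contrapositively, if every $\chi(\HFK(L,J,m))$ with $m$ in a given residue class mod $p$ were zero, the entire sum would vanish, contradicting that $\chi(\HFhat(L,[i]))=1$.

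All of the genuine content is therefore packaged into \eqref{eqn:euler-1}, and there is no additional obstacle to overcome in the proof of the lemma itself. The identity \eqref{eqn:euler-1} in turn rests on two inputs already recalled in the excerpt: the $\spc$-refined spectral sequence \eqref{eqn:hfk} from the Alexander-graded summands $\HFK(L,J,m)$ with $m \equiv i \pmod{p}$ to the hat Heegaard Floer homology $\HFhat(L,[i])$, together with the standard fact that for a rational homology sphere $L$ each $\spc$-summand of $\HFhat(L)$ has Euler characteristic equal to $1$. These inputs imply the Euler-characteristic identity, and the lemma then follows by pure arithmetic.
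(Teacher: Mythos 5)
Your proof is correct and is exactly the paper's argument: the paper states that Lemma \ref{lem:euler} "follows immediately from \eqref{eqn:euler-1}," which is precisely the observation that a sum equal to $1$ must have a nonzero summand. Nothing further is needed.
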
 
The final lemma below is proved in \cite[Proof of Theorem 1]{rasmussen-lens-space-surgeries}. We  only need it for the last claim in Theorem \ref{thm:classify-simple-knots} in the case where $g(J)>0$. We do not need it when $g(J)=0$ (in particular, we do not need it for the proof of Theorem \ref{thm:main}) because we know automatically in this case  that $J$ is  a core and thus simple.%We   give a proof here for completeness.

\begin{lemma}\label{lem:genus-small}
Let $J$ be a primitive knot in $L$. Recall that we have assumed that $|H_1(L)|=p$.
Suppose in addition that $\dim_{\Z/2\Z}\HFhat(L)=p$ and \[g(J)  < \frac{p+1}{2}.\] Then $\dim_{\Z/2\Z}\HFK(L,J)=p$. That is, $J$ is Heegaard Floer simple.
\end{lemma}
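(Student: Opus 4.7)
The plan is to combine the Heegaard Floer Alexander-filtration spectral sequence with a graded Euler-characteristic comparison to the primitive simple knot homologous to $J$, and then to use the absolute Maslov grading to rule out hidden cancelling pairs, following \cite[Proof of Theorem 1]{rasmussen-lens-space-surgeries}.

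Since $\dim \HFhat(L) = p = |H_1(L)|$, the manifold $L$ is a Heegaard Floer L-space with $\HFhat(L,[i]) \cong \Z/2\Z$ for each $[i] \in \spc(L) \cong \Z/p\Z$. The refined spectral sequence~\eqref{eqn:hfk} then gives
\[\dim_{\Z/2\Z} \bigoplus_{m\equiv i\!\!\!\!\!\pmod{p}} \HFKhat(L,J,m) \geq 1\]
in each residue class, summing to $\dim \HFKhat(L,J) \geq p$. The task is the reverse inequality.

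The hypothesis $g(J) < (p+1)/2$ (with $p$ an odd prime power) forces $g(J) \leq (p-1)/2$, so by the genus-detection property of $\HFKhat$, the Alexander support of $\HFKhat(L,J)$ lies in $[-(p-1), p-1]$. Within each residue class $[i] \neq [0]$ only the Alexander gradings $\{i, i-p\}$ can contribute, while within $[0]$ only $m=0$ does. Let $S$ denote the primitive simple knot in the homology class of $J$: it satisfies $\dim \HFKhat(L,S) = p$ with each graded piece of rank $0$ or $1$, and its Alexander support lies in $[-(p-1)/2, (p-1)/2]$. Applying Lemma~\ref{lem:difference},
\[\chigr(\HFK(L,J)) - \chigr(\HFK(L,S)) = (t^p-1)^2 Q(t)\]
for some Laurent polynomial $Q$. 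Writing $Q = \sum_{k=k_1}^{k_2} q_k t^k$ with nonzero endpoint coefficients $q_{k_1}, q_{k_2}$, the support of $(t^p-1)^2 Q$ is exactly $[k_1, k_2+2p]$ without endpoint cancellation (only $Q$ contributes at $t^{k_1}$ and only $t^{2p}Q$ at $t^{k_2+2p}$). Fitting this inside $[-(p-1), p-1]$ forces $k_1 \geq -(p-1)$ and $k_2 + 2p \leq p-1$, i.e.\ $k_2 \leq -p-1 < k_1$, contradicting $k_1 \leq k_2$ unless $Q \equiv 0$. Hence $\chigr(\HFK(L,J)) = \chigr(\HFK(L,S))$, which means in each residue class $[i]$ exactly one of the two admissible Alexander gradings supports nonzero Euler characteristic $\pm 1$, the other being zero.

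The main obstacle is upgrading this Euler-characteristic equality to a dimension equality: in principle each graded piece of $\HFKhat(L,J)$ could host cancelling pairs of generators (one in each $\Z/2\Z$-Maslov grading) contributing zero to $\chi$ but killed by higher spectral-sequence differentials. To rule them out I would use the absolute $\Q$-Maslov grading: the $E_\infty$ generator in each $[i]$ is concentrated in Maslov grading $d(L,[i])$, and the higher differentials between the admissible Alexander gradings $\{i, i-p\}$ shift Alexander by $-p$ and Maslov by $-1$. A case analysis using the $d$-invariants of $L$, together with the Maslov-parity constraints already fixed by the Euler characteristics, forces all higher differentials to vanish. The spectral sequence thus collapses at $E_1$, yielding $\dim_{\Z/2\Z}\HFKhat(L,J) = p$ as claimed.
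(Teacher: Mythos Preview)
The paper does not give its own proof of this lemma; it simply cites \cite[Proof of Theorem 1]{rasmussen-lens-space-surgeries}. Your attempt, however, has two concrete problems.

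First, your claim that the primitive simple knot $S$ has Alexander support in $[-(p-1)/2,(p-1)/2]$ is false: that would force $g(S)=0$, yet primitive simple knots in lens spaces frequently have positive genus---indeed this very paper computes such genera in Proposition~\ref{prop:genus-SF}, and for instance $S(101,66,330)\subset L(101,66)$ has genus $52>(101-1)/2$. Your width argument for $Q\equiv 0$ really needs $g(S)\leq (p-1)/2$ so that the difference of graded Euler characteristics has total support of width $<2p$; the example shows this can fail, in which case $\chigr(\HFK(L,J))\neq\chigr(\HFK(L,S))$ and your conclusion simply does not follow. Note also that the lemma as stated applies to any Heegaard Floer L-space $L$, where there need be no ``simple knot'' to compare to at all; the comparison with $S$ is not the mechanism Rasmussen uses.

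Second, and more fundamentally, the content of the lemma lives entirely in your last paragraph, and you have not actually done it. Writing that ``a case analysis using the $d$-invariants of $L$ \dots\ forces all higher differentials to vanish'' is an assertion, not an argument. You have correctly reduced to the situation where each residue class $[i]$ carries at most two Alexander gradings $i$ and $i-p$, with a single possible differential $d_p$ between them; but explaining \emph{why} $d_p=0$---using the interplay of the absolute Maslov grading, conjugation symmetry, and the L-space hypothesis---is exactly Rasmussen's proof, and it does not pass through the Euler-characteristic comparison you set up. As written, the detour through $S$ is both flawed and unnecessary, and the key step remains undone.
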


%\begin{proof}
%WRITE THE PROOF
%\end{proof}

We may now prove Theorem \ref{thm:classify-simple-knots}.

\begin{proof}[Proof of Theorem \ref{thm:classify-simple-knots}] Let $J\subset L$ be as in the hypothesis of the theorem, and let $S$ denote the simple knot in the same homology class as $J$. Simple knots admit Heegaard diagrams for which the  knot Floer complex has trivial differential; in particular, they are Heegaard Floer simple.\footnote{Simple knots are also instanton Floer simple, by \cite{li-ye-sutures}; see also \cite{bly}.} Thus, \[\dim_{\Z/2\Z}\HFK(L,S) = p.\] It then follows  from Lemma \ref{lem:euler} and  \eqref{eqn:positivity} that $S$ satisfies the property\\
\begin{equation}
\begin{array}{l}
\text{for each } [i]\in\Z/p\Z, \text{ there is a \emph{unique} integer }m\equiv i\!\!\!\pmod{p} \text{ such that:} \\
 \chi(\HFK(L,S,m))=1, \textrm{ and } \chi(\HFK(L,S,k))=0 \textrm{ for all other } k\equiv i\!\!\!\pmod{p}.

\end{array}
\tag{$\ast$}\label{eq:property-ast}
\end{equation}\\
%the property $(\ast)$: for each $[i]\in\Z/p\Z$, there is a unique $m\equiv i\pmod{p}$ such that \[\chi(\HFK(L,K,m))=1 \textrm{ and }  \chi(\HFK(L,K,k))=0 \textrm{ for all other } k\equiv i\!\!\!\pmod{p}. \] 
It also follows from Lemma \ref{lem:euler} that  \[\dim_{\Z/2\Z}\HFK(L,S,m) = 0 \textrm{ or } 1\] for each $m\in \Z$.
Similarly, Lemma \ref{lem:euler} applied to $J$, combined with the facts that \[\dim_{\C}\KHI(L,J) = p\] and that \begin{equation}\label{eqn:s}\chigr(\KHI(L,J)) = \pm t^s \cdot \chigr(\HFK(L,J))\end{equation} for some $s\in \Z$, implies that $J$ also satisfies \eqref{eq:property-ast}, and, additionally, that \[\dim_{\C}\KHI(L,J,m) = 0 \textrm{ or } 1\] for each $m\in \Z$. In particular, \[\dim_{\C}\KHI(L,J,\pm n) = 1\] for \[n = g(J) + \frac{p-1}{2},\] which implies that $J$ is rationally fibered. Since \[\KHI(L,J, i)=0\] for $|i|>n$, and likewise for the Heegaard knot Floer homology of $J$, and since  $\chigr(\HFK(L,J))$ is symmetric under the substitution $t\mapsto t^{-1}$, this then implies that $s=0$ in \eqref{eqn:s}. 

We next prove that $J$ and $S$ have the same genus. For this, we  claim, as in \cite[Proof of Theorem 2]{rasmussen-lens-space-surgeries}, that  \begin{equation}\label{eqn:euler-same}\overline\Delta_J(t):=\chigr(\HFK(L,J)) = \chigr(\HFK(L,S))=:\overline\Delta_S(t).\end{equation} Indeed, the fact that $J$ and $S$ both satisfy property \eqref{eq:property-ast} means that we can write
\begin{align*}
\overline\Delta_S(t) &=\sum_{[i]\in\Z/p\Z} t^{n_{[i]}}\\
\overline\Delta_J(t) &=\sum_{[i]\in\Z/p\Z} t^{n_{[i]} + p\cdot m_{[i]}}
\end{align*}
for some integers $n_{[i]} \equiv i \pmod{p}$ and $m_{[i]}$.
Since $J$ and $S$ are in the same homology class, Lemma \ref{lem:difference} says that $(t^p-1)^2$ divides the difference
\[\overline\Delta_J(t)-\overline\Delta_S(t) = \sum_{[i]\in\Z/p\Z} t^{n_{[i]}}(t^{p\cdot m_{[i]}}-1).\] However, after dividing this difference by $t^p-1$, we are left with \[\sum_{\substack{[i]\in\Z/p\Z\\
m_{[i]} \neq 0}} t^{n_{[i]}}(1+t^{m_{[i]}} + \dots + t^{(p-1)\cdot m_{[i]}}).\] For this sum to be divisible by another factor of $t^p-1$, it must have $1$ as a root. But that is only possible if the sum is trivial---that is, if  $m_{[i]}=0$ for every $[i]\in \Z/p\Z$. This
 proves that $\overline\Delta_J(t)=\overline\Delta_S(t)$ as desired.

Combining \eqref{eqn:s} (recall that  $s=0$) with \eqref{eqn:euler-same}, we have therefore shown that \[\chigr(\KHI(L,J)) = \pm\chigr(\HFK(L,S)).\] Since the instanton Floer homology of $J$ has dimension at most 1 in each Alexander grading, and likewise for the Heegaard knot Floer homology of $S$, it follows that for each integer $i$, \[\KHI(L,J,i) = 0\, \textrm{ iff } \,\HFK(L,S,i) = 0.\] This implies that $J$ and $S$ have the same genus, since the Alexander gradings in these two theories detect genus in the same way.

For the claim about Alexander polynomials, Rasmussen proves in \cite[\S3.7]{rasmussen-lens-space-surgeries} that, for any primitive knot $J$ in a rational homology sphere $L$ of order $p$, we have\[\overline\Delta_J(t) = \Delta_J(t) \cdot \frac{t^{p/2}-t^{-p/2}}{t^{1/2}-t^{-1/2}}, \] where $\Delta_J(t)$ is the symmetrized Alexander polynomial of the complement $L\setminus N(J)$. We have shown above that $\overline\Delta_J(t) = \overline\Delta_S(t)$, from which it follows that $\Delta_J(t) = \Delta_S(t)$ as well.%As in \S\ref{sec:floer-simple}, there is a knot $C$ in a homology sphere $Z$ such that $Z\setminus N(C)\cong L\setminus N(J)$. Then $\Delta_J(t)$ is precisely the Alexander polynomial of $C$. In particular, \[|H_1(\dcover(Z,C))| = |\Delta_J(-1)|,\] from which it follows by Lemma \ref{} that \[|H_1(\dcover(L,J))| = p\cdot |\Delta_J(-1)|.\]

For the last claim of the theorem, suppose  \begin{equation*}\label{eqn:genus-j-2}g(J)  \leq \frac{|H_1(L)|+1}{4} = \frac{p+1}{4}.\end{equation*} Then $J$ is Heegaard Floer simple, by Lemma \ref{lem:genus-small}. Furthermore, Baker proved in  \cite[Theorem 1.1]{baker-smallgenus} that any knot $J$ satisfying the  genus bound above is  a 1-bridge knot in $L$.  Finally, Hedden proved in \cite[Proposition 3.3]{hedden-simple} that any 1-bridge knot in a lens space which is also Heegaard Floer simple is  simple. Thus, $J\cong S$.
\end{proof}

\section{Other small surgeries and $SU(2)$}\label{sec:other}

As mentioned in the introduction, we suspect that  the following is true:

\begin{conjecture} \label{conj:other} If $K\subset S^3$ is a nontrivial knot, then $S^3_r(K)$ is not $SU(2)$-abelian for any rational number $r \in [0,5)$.
\end{conjecture}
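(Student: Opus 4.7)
My strategy is to push the framework of Theorems~\ref{thm:main} and~\ref{thm:other-small-surgeries} to cover every rational $p/q\in[0,5)$. Fix a nontrivial $K\subset S^3$ and assume $Y=S^3_{p/q}(K)$ is $SU(2)$-abelian. The argument would follow the four phases of the proof of Theorem~\ref{thm:main}: (i) show $Y$ is an instanton L-space; (ii) classify $K$ as either a previously-handled torus knot or a doubly-periodic genus-$2$ knot; (iii) realize $Y$ as a branched double cover of a Floer-simple primitive knot $J$ in a lens space; (iv) classify such $J$ to conclude that $Y$ is a lens space, contradicting the fact that $Y$ cannot be Seifert fibered. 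Phases (i), (iii), and (iv) currently require that $p$ be a prime power, and phase (iii) further requires $p$ odd; removing these restrictions is the heart of the problem.

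\textbf{Phase (i).} The current route via \cite{bs-stein} uses cyclical finiteness, which forces prime-power cyclic $H_1$. My plan is to replace Lemma~\ref{lem:su2-cyclic-lspace} by a direct computation of $\dim_\C\Is(Y)$ from the representation variety under $SU(2)$-abelianness: all representations have cyclic image, and one would show they contribute to $\Is(Y)$ with multiplicity matching the lower bound $\dim_\C\Is(Y)\ge|H_1(Y)|$ from \cite[Theorem 1.2]{li-ye-sutures}, via a closed-manifold analogue of the nondegeneracy arguments in Proposition~\ref{prop:branched-cover-nondegenerate} and Lemma~\ref{lem:betti-1-implies-nondegenerate}. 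This is itself a substantive step, but if carried out would yield the L-space property for arbitrary cyclic $H_1$; phase (ii) then follows from \cite[Theorem 1.15]{bs-lspace}, Theorem~\ref{thm:genus-2-lspace}, and the torus-knot exclusions already present in the proof of Proposition~\ref{prop:surgery-cover}.

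\textbf{Phase (iii), odd $p$.} The only non-trivial extension needed is the cyclotomic argument at the end of the proof of Theorem~\ref{thm:floer-simple}: for prime-power $p^e$, every $\Phi_d(t)$ with $d\mid 2p^e$ and $d\ge 2$ has $|\Phi_d(\pm 1)|>1$, contradicting $|\Delta_C(\pm 1)|=1$. For non-prime-power $p$ this can fail; e.g.\ $\Phi_{15}(\pm 1)=1$. My plan is to rule out the remaining cyclotomic divisors directly using the rigid form of L-space knot Alexander polynomials from \cite[Theorem 1.9]{li-ye-surgery} (monic, alternating $\pm 1$ coefficients with gapped support), which should forbid any cyclotomic factor of large composite order. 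Theorem~\ref{thm:classify-simple-knots} and its supporting Lemmas~\ref{lem:difference}--\ref{lem:genus-small} require no modification, so the rest of the argument goes through.

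\textbf{The main obstacle: even $p$.} When $p$ is even, the doubly-periodic involution $\tau$ fails to act freely on the Dehn-filling solid torus; its fixed set meets that solid torus in a core circle $C$, and $Y$ becomes a branched double cover of a lens space along a two-component link $J\cup C'$ rather than along a knot. My plan is to extend the Floer-simple machinery of Section~\ref{sec:floer-simple} to such two-component links, using a link analogue of Li--Ye's Euler-characteristic correspondence; the nondegeneracy and cyclotomic arguments should transplant, but the counting of binary dihedral representations becomes considerably more delicate because the linking structure introduces additional constraints. Alternatively, one could bypass the quotient entirely, analyzing the mapping torus $S^3_0(K)\cong Y_{\hat\psi}$ directly via Smith's isomorphism \cite{smith} together with the instanton surgery exact triangle to constrain $Y$. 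I expect this even-$p$ phase to be where genuinely new ideas are needed, and it is the main obstruction to a full proof of the conjecture.
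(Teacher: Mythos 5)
The statement you are proving is Conjecture~\ref{conj:other}, which the paper leaves open: the paper establishes only the special cases in Theorems~\ref{thm:main}, \ref{thm:other-small-surgeries}, \ref{thm:power-2} and \ref{thm:simple-abelian}, so there is no proof of the full conjecture to compare against, and your submission is a research program rather than a proof --- as you yourself note, the even-$p$ phase ``is the main obstruction to a full proof.'' That is already a genuine gap, but beyond the honestly flagged open steps, several of the concrete claims in your plan are not correct as stated.

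First, in Phase~(i), dropping the prime-power hypothesis of Lemma~\ref{lem:su2-cyclic-lspace} is not a routine strengthening: the cyclical-finiteness condition of \cite{bs-stein} is the substantive input, and your proposed ``closed-manifold analogue'' of Proposition~\ref{prop:branched-cover-nondegenerate} has no obvious content --- there, nondegeneracy was deduced from Betti numbers of finite cyclic covers of a knot complement via Boyer--Nicas, a mechanism with no counterpart for a closed $Y$, where one must control degenerate reducible flat connections and holonomy perturbations; this is exactly the open problem, not a transplantable argument. Second, in Phase~(iii) your cyclotomic fix conflates two different knots: the polynomial constrained at the end of the proof of Theorem~\ref{thm:floer-simple} is $\Delta_C$, where $C\subset Z$ is the companion knot with $Z\setminus N(C)\cong L\setminus N(J)$, not the Alexander polynomial of $K\subset S^3$; the rigid form from \cite[Theorem~1.9]{li-ye-surgery} applies to $K$ and says nothing about $\Delta_C$, so a factor such as $\Phi_{15}(t)$ (with $\Phi_{15}(\pm1)=1$) is not excluded by your argument. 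Third, your assertion that Theorem~\ref{thm:classify-simple-knots} ``requires no modification, so the rest of the argument goes through'' is false: to obtain a contradiction one needs either the $|H_1|$ mismatch of Proposition~\ref{prop:genus-SF}, available only for $p/q\in(4,5)$, or the genus bound $g(S(p,2q,10q))\le (p+1)/4$, which by \eqref{eq:simple-genus} fails for most slopes in $[3,4)$ --- this is precisely why Theorem~\ref{thm:other-small-surgeries} carries the restriction $23p-9\le 80q\le 25p+9$, and slopes such as $p/q=11/3$ (with $p=11$ a prime power) remain out of reach even after Phases~(i)--(iii). Finally, the even-$p$ non-prime-power case (where even the paper's Theorem~\ref{thm:power-2} and its Klassen-type binary dihedral argument need $p$ a power of $2$) is left entirely open in your plan, as you acknowledge. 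So the proposal identifies the right skeleton but does not constitute a proof, and two of its three proposed repairs (the Phase~(iii) cyclotomic step and the Phase~(iv) classification step) would fail as written.
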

 Kronheimer--Mrowka proved this conjecture  for all $r\in [0,2]$ in \cite{km-su2}. Baldwin--Sivek proved it in \cite[Theorem 1.8]{bs-lspace} for the dense subset of rationals  in $(2,3)$ given by \[r=p/q\in (2,3),\] where $p,q$ are coprime  and $p$ is a prime power. Indeed, they proved that if $S^3_r(K)$ is $SU(2)$-abelian in this case, then $K$ has genus 1, and is fibered and strongly quasipositive. The only  such knot, the right-handed trefoil, does not admit $SU(2)$-abelian surgeries in this range.

As alluded to in \S\ref{ssec:other}, proving Conjecture \ref{conj:other} is much more difficult for the corresponding dense subset of rationals in the interval $[3,5)$, given by  \[r=p/q\in [3,5), \] where $p,q$ are coprime and $p$ is a prime power. If $S^3_r(K)$ is $SU(2)$-abelian in this case, then it is  an instanton L-space, by Lemma \ref{lem:su2-cyclic-lspace}, from which it follows  that $K$ has genus at most 2, and is fibered and strongly quasipositive \cite{bs-lspace}. One  can rule out the trefoil, so  $K$ has genus 2, in which case it also has the same Alexander polynomial as $T_{2,5}$, by Lemma \ref{lem:i-sharp-basics}. The difficulty is that there are infinitely many such knots; see, for example,  \cite{misev2017families}. 

In this section, we illustrate how our techniques can   be used to prove Conjecture \ref{conj:other} for infinitely many  slopes $r\in [3,5)$, culminating in the proof of Theorem \ref{thm:other-small-surgeries}. 

We start with a comparatively easy case, in which the numerator of $r$ is a power of $2$, which  is handled with a trick involving a result of Klassen. In this case, we can actually deal with all slopes less than 7.

\begin{theorem}
\label{thm:power-2}
Let $p,q$ be coprime positive integers with $p/q<7$, where $p$ is a power of $2$.  Then $S^3_{p/q}(K)$ is not $SU(2)$-abelian for any nontrivial knot $K\subset S^3$.
\end{theorem}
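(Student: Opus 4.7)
The plan is to build a non-abelian $SU(2)$-representation of $\pi_1(S^3_{p/q}(K))$ by extending a non-abelian binary dihedral representation of $\pi_1(S^3 \setminus K)$ provided by Klassen's theorem \cite[Theorem 10]{klassen}. First I would dispose of the small-$p$ cases: if $p \in \{1, 2\}$ then $p/q \leq 2$ and the conclusion is already due to Kronheimer-Mrowka \cite{km-su2}, so I may assume $p \geq 4$, in which case $4 \mid p$. Suppose for contradiction that $S^3_{p/q}(K)$ is $SU(2)$-abelian. Then by Lemma \ref{lem:su2-cyclic-lspace} it is an instanton L-space, so \cite[Theorem 1.15]{bs-lspace} forces $K$ to be a fibered strongly quasipositive instanton L-space knot with $g(K) \leq 3$.

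The first main task is to pin down $\Delta_K(t)$ and verify $\det K = |\Delta_K(-1)| > 1$ in every case. The Li-Ye structure theorem \cite[Theorem 1.9]{li-ye-surgery}, combined with the fiberedness non-vanishing $\KHI(K, g(K) - 1) \neq 0$ of \cite[Theorem 1.7]{bs-trefoil}, forces $n_{k-1} = g(K) - 1$ in the allowed Li-Ye sequence. Running through genera: $g(K) = 1$ gives $K = T_{2, 3}$ with $\det = 3$; $g(K) = 2$ gives $\Delta_K(t) = t^2 - t + 1 - t^{-1} + t^{-2}$ with $\det = 5$ (as in Lemma \ref{lem:i-sharp-basics}); and $g(K) = 3$ gives either $\Delta_K = t^3 - t^2 + 1 - t^{-2} + t^{-3}$ ($\det = 3$, sequence $(3,2,0)$) or $\Delta_K = t^3 - t^2 + t - 1 + t^{-1} - t^{-2} + t^{-3}$ ($\det = 7$, sequence $(3,2,1,0)$). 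The would-be staircase $(3, 1, 0)$ with polynomial $t^3 - t + 1 - t^{-1} + t^{-3}$ and $\det = 1$ is precisely excluded by fiberedness. Hence Klassen yields a non-abelian binary dihedral representation $\rho \colon \pi_1(S^3 \setminus K) \to SU(2)$; after conjugation, it has image in $\{e^{i\theta}\} \cup \{e^{i\theta}j\}$ with $\rho(\mu) = j$, an element of order $4$ in $SU(2)$.

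The second step, where the ambient $S^3$ is essential, is to show $\rho(\lambda) = 1$ for the Seifert longitude. Since $\lambda$ is null-homologous in $S^3 \setminus K$ it lies in the commutator subgroup, so $\lambda$ lifts to a loop $\tilde\lambda$ in the unbranched double cover $X' \to S^3 \setminus K$. Any Seifert surface $F$ has $H_1(F) \to H_1(S^3 \setminus K) = 0$, because the Alexander map $S^3 \setminus K \to S^1$ is constant on $F$; therefore $F$ lifts to two disjoint copies in $X'$, each bounded by a lift of $\lambda$, so $\tilde\lambda$ bounds in $X'$. The restriction $\rho|_{\pi_1(X')}$ takes values in the abelian torus $\{e^{i\theta}\}$ and factors through $H_1(X')$, so it kills the null-homologous class $[\tilde\lambda]$, giving $\rho(\lambda) = 1$. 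Consequently $\rho(\mu^p \lambda^q) = j^p \cdot 1^q = j^p = 1$ because $4 \mid p$, so $\rho$ descends to a homomorphism $\bar\rho \colon \pi_1(S^3_{p/q}(K)) \to SU(2)$; since $\pi_1(S^3 \setminus K) \twoheadrightarrow \pi_1(S^3_{p/q}(K))$, the image of $\bar\rho$ coincides with the non-abelian image of $\rho$, contradicting the $SU(2)$-abelian hypothesis. I expect the most delicate step to be the genus-$3$ Alexander polynomial enumeration, since ruling out the $\det = 1$ staircase via fiberedness is exactly what allows the Klassen trick to cover slopes like $16/3, 32/5, 64/11, \dots$ in $[5, 7)$; without this exclusion, the trick would collapse at the upper end of the genus range.
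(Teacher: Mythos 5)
Your proposal is correct and follows essentially the same route as the paper: use the instanton L-space structure to constrain $\Delta_K(t)$ so that $\det(K)=|\Delta_K(-1)|\in\{3,5,7\}$, then extend one of Klassen's non-abelian binary dihedral representations across the surgery using $\rho(\mu)=j$, $\rho(\lambda)=1$, and $4\mid p$. Your minor variations---keeping the trefoil in play and handling it by the same Klassen argument rather than via Seifert-fibered geometry and \cite{sz-menagerie}, deducing $4\mid p$ from the Kronheimer--Mrowka result for slopes in $[0,2]$ rather than from $p/q\geq 3$, and proving $\rho(\lambda)=1$ by lifting the Seifert surface to the double cover instead of citing that $\lambda$ lies in the second commutator subgroup---are all sound and do not change the substance of the argument.
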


\begin{proof}
Suppose $Y=S^3_{p/q}(K)$ is $SU(2)$-abelian. Then it is an instanton L-space by Lemma~\ref{lem:su2-cyclic-lspace}.  It then follows from \cite[Theorem~1.15]{bs-lspace}  that
\[ 2g(K) - 1 \leq p/q < 7, \]
which implies that $g(K) \leq 3$.  If $g(K)=1$ then $K$ is the right-handed trefoil, and $Y$ is Seifert fibered with base orbifold $S^2(2,3,\Delta)$ where
\[ \Delta = |6q-p|. \]
We cannot have $\Delta \leq 1$, because otherwise either $\frac{p}{q} = \frac{6}{1}$ or 
\[ \frac{p}{q} = 6 \pm \frac{1}{n} = \frac{6n\pm1}{n} \]
for some integer $n \geq 1$, and in these cases $p$ is not actually a power of $2$.  But then $\Delta > 1$ and so $Y$ is not $SU(2)$-abelian \cite{sz-menagerie}, a contradiction.  Therefore, $g(K)$ is either $2$ or $3$, which then implies that $p/q \geq 3$.  Since $p$ is a power of 2, it must be a multiple of 4.

Theorem~\ref{thm:genus-2-lspace} now tells us that $K$ has Alexander polynomial
\[\Delta_K(t) = t^2-t+1-t^{-1}+t^{-2}\]
if $g(K)=2$, whereas if $g(K)=3$, then the same argument as in the proof of Lemma~\ref{lem:i-sharp-basics} (appealing to \cite[Theorem~1.9]{li-ye-surgery} as well as \cite[Proposition~7.16]{km-excision} and \cite[Theorem~1.7]{bs-trefoil}) says that
\begin{align*}
\Delta_K(t) &= t^3-t^2+t-1+t^{-1}-t^{-2}+t^{-3} \\
&\phantom{=}\text{ or } t^3-t^2+1-t^{-2}+t^{-3}.
\end{align*}
In each of these cases we have
\[ \det(K) = |\Delta_K(-1)| = 5 \text{ or } 7 \text{ or }3, \]
respectively.  From here we argue exactly as in \cite[Lemma~9.5]{bs-lspace}: Klassen \cite[Theorem~10]{klassen} proved that there are \[\frac{\det(K)-1}{2} = 2\text{ or } 3 \text{ or }1,\] respectively, conjugacy classes of non-abelian representations
\[ \rho: \pi_1(S^3\setminus K) \to SU(2) \]
with image in the binary dihedral group $D_\infty = \{e^{i\theta}\} \cup \{e^{i\theta}j\}$. Let $\mu$ be a meridian of $K$ and $\lambda$ the Seifert longitude. Following the proof of \cite[Theorem~10]{klassen}, since $\rho$ has non-abelian image we must have that $\rho(\mu) = j$ up to conjugacy, and hence $\rho(\mu^4) = 1$.  Moreover, we know that $\lambda$ lies in the second commutator subgroup of $\pi_1(S^3\setminus K)$; thus $\rho(\lambda)$ lies in the second commutator subgroup of $D_\infty$, and the latter is trivial, so $\rho(\lambda)=1$.  Since $p$ is a multiple of 4, it follows that $\rho(\mu^p\lambda^q) = 1$, so $\rho$ descends to a representation
\[ \pi_1(S^3_{p/q}(K)) \to SU(2) \]
with non-abelian image, but we assumed $Y= S^3_{p/q}(K)$ is $SU(2)$-abelian, a contradiction.
\end{proof}

We next turn to the much more difficult case in which the numerator of $r$ is an odd prime power; this will require the full strength of the techniques in this paper.  To start, we  review some facts about  simple knots in lens spaces, and then study these knots in  greater depth.

If $A\subset S^3$ is an unknot, then  the lens space $L(a,b)$ is given in our notation by \[L(a,b) = S^3_{a/b}(A).\] The standard genus-1 Heegaard splitting of this lens space is given by the union of the solid torus exterior $S^3\setminus N(A)$ with the surgery solid torus. The core of each solid torus generates the first homology of the lens space. Let $S(a,b,k)$ denote the unique (oriented) simple knot which is homologous to $k$ times the core of the surgery solid torus, for some orientation of this core.\footnote{This simple knot is denoted by $K(a,b,k)$ in \cite{rasmussen-lens-space-surgeries}.} Note that \[S(a,b,k) \cong S(a,b,k')\] when $k\equiv k'\pmod{a}.$ Moreover, $S(a,a-b,k)\subset -L(a,b)$ is the orientation-reverse of the mirror of $S(a,b,k)$. Finally, $S(a,b,\pm 1)$ is isotopic to a core and therefore has genus $0$. 

Several of our results thus far, like Proposition \ref{prop:surgery-cover},  pertain to a  knot $J\subset L(p,2q)$ which is homologous to 5 times a core of the \emph{other} solid torus  $S^3\setminus N(A)$ in the genus-1 splitting of this lens space. It is easy to check (for some orientation on $J$) that \[[J] = [S(p,2q,10q)]\in H_1(L(p,2q)).\]
Note that when $p$ is odd,
the knot $S(p,2q,10q)$ is primitive if and only if $\gcd(p,5)=1$. 

The main theorem of this section is as follows:

\begin{theorem} \label{thm:simple-abelian}Let $p$ and $q$ be coprime positive integers with $p/q\in [3,5)$, where $p$ is an odd prime power and $\gcd(p,5)=1$. If either $p/q\in (4,5)$ or  \[g(S(p,2q,10q))\leq \frac{p+1}{4},\] then
$S^3_{p/q}(K)$ is not $SU(2)$-abelian for any nontrivial knot $K\subset S^3$. 
\end{theorem}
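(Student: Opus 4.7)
The plan is to follow the template of the proof of Theorem~\ref{thm:main}, with $p/q$ in place of the slope $3$. Suppose for contradiction that $Y = S^3_{p/q}(K)$ is $SU(2)$-abelian for some nontrivial knot $K$. Proposition~\ref{prop:surgery-cover} (which applies since $p$ is an odd prime power with $p/q < 5$) yields $Y \cong \dcover(L, J)$ where $L = L(p, 2q)$ and $[J] = [S(p, 2q, 10q)] \in H_1(L)$, and also shows that $Y$ is not Seifert fibered. The hypothesis $\gcd(p, 5) = 1$ ensures that $10q$ generates $\Z/p\Z$, so $J$ is primitive. Since $|H_1(Y)| = p = |H_1(L)|$, Theorem~\ref{thm:floer-simple} gives $\dim_\C \KHI(L, J) = p$, and Theorem~\ref{thm:classify-simple-knots} then says that $J$ is rationally fibered with $g(J) = g(S(p, 2q, 10q))$, and moreover that $J$ is isotopic to $S(p, 2q, 10q)$ whenever this common genus is at most $(p+1)/4$.

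The strategy is to show that $J$ is isotopic to the simple knot $S := S(p, 2q, 10q)$ and then derive a contradiction with the fact that $Y$ is not Seifert fibered. When $g(S(p, 2q, 10q)) \leq (p+1)/4$ holds by hypothesis, Theorem~\ref{thm:classify-simple-knots} immediately gives $J \cong S$. When instead $p/q \in (4, 5)$, I would show that this genus bound is automatic under the arithmetic constraints on $(p, q)$, thereby reducing to the previous case. The concrete approach is to invoke an explicit formula for the genus of a simple knot $S(a, b, k)$ in terms of the continued fraction expansion of $a/b$ and the class $k \bmod a$, and then to estimate directly that $g(S(p, 2q, 10q)) \leq (p+1)/4$ whenever $p/q \in (4, 5)$.

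Granting $J \cong S$, I would then show that $\dcover(L, S)$ is always Seifert fibered for any simple knot $S$ in a lens space $L$: since a simple knot is a $(1, 1)$-knot with respect to the standard genus-one Heegaard splitting of $L$, lifting the Heegaard torus to the branched double cover produces a genus-one Heegaard splitting of $\dcover(L, S)$, so $\dcover(L, S)$ has Heegaard genus at most one and is therefore Seifert fibered. This contradicts the conclusion of Proposition~\ref{prop:surgery-cover} that $Y$ is not Seifert fibered, completing the argument in both cases. As a sanity check, the case $J = $ core (genus $0$) treated in the proof of Theorem~\ref{thm:main} is recovered here, where $\dcover(L, S)$ is even a lens space.

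The main obstacle is establishing the genus bound in the second case: the inequality $g(S(p, 2q, 10q)) \leq (p+1)/4$ must be verified uniformly over all $(p, q)$ with $p/q \in (4, 5)$, $p$ an odd prime power, and $\gcd(p, 5) = \gcd(p, q) = 1$, which requires a careful arithmetic analysis using the simple knot genus formula. A secondary concern is the Seifert-fiberedness of $\dcover(L, S)$ for an arbitrary simple knot $S$; this is essentially classical (coming from the structure of $(1, 1)$-knots in lens spaces), but still deserves a precise argument, as it is the geometric input that interacts with the non-Seifert-fiberedness conclusion of Proposition~\ref{prop:surgery-cover}.
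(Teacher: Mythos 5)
Your overall skeleton (Proposition~\ref{prop:surgery-cover}, then Theorem~\ref{thm:floer-simple}, then Theorem~\ref{thm:classify-simple-knots}) matches the paper, but both of the steps you flag as ``to be done'' actually fail as proposed. For the case $p/q\in(4,5)$ you plan to show that the genus bound $g(S(p,2q,10q))\leq\frac{p+1}{4}$ holds automatically, reducing to the first case. It does not: by the formula in Proposition~\ref{prop:genus-SF} (see \eqref{eq:simple-genus}), for $4<p/q<5$ one has $g(S(p,2q,10q))=20q-4p-2$, and the inequality $20q-4p-2\leq\frac{p+1}{4}$ is equivalent to $80q\leq 17p+9$, i.e.\ roughly $p/q\gtrsim 80/17\approx 4.7$. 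For example $p/q=13/3$ gives genus $6>\frac{14}{4}$, so the reduction fails on a large part of $(4,5)$. The paper handles $(4,5)$ by an entirely different mechanism that never needs $J\cong S$: Theorem~\ref{thm:classify-simple-knots} gives $\Delta_J(t)=\Delta_S(t)$, and Lemma~\ref{lem:h1-bdc-alex} then forces $|H_1(\dcover(L,J))|=p\,|\Delta_S(-1)|$, which by Proposition~\ref{prop:genus-SF} equals $5p$ on this interval; this contradicts $|H_1(S^3_{p/q}(K))|=p$. You need this determinant/first-homology argument (or some substitute), not the genus bound.

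The second gap is your claim that $\dcover(L,S)$ is Seifert fibered for \emph{every} simple knot $S$ because a $(1,1)$-knot lifts to a genus-one Heegaard splitting. The lift is genus two, not genus one: the double cover of the Heegaard torus branched over the two intersection points of $S$ with it has Euler characteristic $-2$, and correspondingly the branched double cover of a solid torus over a boundary-parallel arc is a genus-two handlebody. Heegaard genus two does not imply Seifert fibered, so this argument gives nothing, and you would need it even in your first case (where $J\cong S$) to contradict the non-Seifert-fibered conclusion of Proposition~\ref{prop:surgery-cover}. The paper's Proposition~\ref{prop:genus-SF} proves Seifert fiberedness only for the specific knots $S(p,2q,10q)$ with $p/q\in[3,6]$, using Ye's theorem that these particular simple knots are isotopic to a $(5,2)$-curve on the Heegaard torus, hence are regular fibers of a Seifert fibration of $L$ (exactly as for torus knots in $S^3$); that geometric input, together with the same proposition's computation of $g(S(p,2q,10q))$ and of $|H_1(\dcover(L,S))|$, is what your outline is missing.
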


Before proving this theorem, we  establish the following proposition. In addition to helping us prove Theorem \ref{thm:simple-abelian}, this proposition  also provides the genera of the relevant simple knots $S(p,2q,10q)$, making it easy to apply Theorem \ref{thm:simple-abelian} in practice. Indeed, we will apply this genus formula at the end to prove Theorem \ref{thm:other-small-surgeries}.

\begin{proposition}
\label{prop:genus-SF}
Let $p$ and $q$ be coprime positive integers with  $p/q\in [3,6]$, where $p$ is odd and $\gcd(p,5) = 1$. Then the branched double cover of \[S=S(p,2q,10q)\subset L(p,2q)=L\] is Seifert fibered with \begin{equation*} \label{eq:simple-genus}
|H_1(\dcover(L,S))| = \begin{cases}
p, & p/q\in [3,4) \\
5p, & p/q \in (4,6).
\end{cases}
\end{equation*} Moreover, this primitive simple knot has genus \[g(S) = \big|\,|4p-10\min\{2q,p-2q\}|-2\,\big|.\] 
\end{proposition}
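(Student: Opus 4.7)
The plan is to verify the three assertions of the proposition in turn: Seifert-fiberedness of $\dcover(L,S)$, the order of its first homology, and the genus of $S$. All three reduce to concrete computations in the combinatorics of simple knots.

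For Seifert-fiberedness: as a simple knot, $S$ is in particular a $(1,1)$-knot, so writing the genus-1 Heegaard splitting $L = V_1 \cup_F V_2$ decomposes $S$ as $\alpha_1 \cup \alpha_2$ with each $\alpha_i \subset V_i$ a boundary-parallel bridge arc. The double branched cover of a solid torus along such an arc is a genus-2 handlebody, so $\dcover(L,S)$ inherits a genus-2 Heegaard splitting. To upgrade this to an explicit Seifert fibration, the plan is to realize $\dcover(L,S)$ as a Dehn surgery on a two-component link in $S^3$ by means of the Montesinos trick: starting from the presentation $L = \dcover(S^3,\mathfrak{b}(p,2q))$ of the lens space as the double cover of $S^3$ branched along the $(p,2q)$ two-bridge knot, lift the bridge arcs of $S$ to $S^3$ to obtain a two-component link whose complement is Seifert fibered, from which the Seifert structure on $\dcover(L,S)$ follows.

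For the first homology, Lemma~\ref{lem:h1-bdc-alex} gives
\[
|H_1(\dcover(L,S))| = p\cdot |\Delta_S(-1)|,
\]
so it suffices to prove $|\Delta_S(-1)| = 1$ when $p/q \in [3,4)$ and $|\Delta_S(-1)| = 5$ when $p/q \in (4,6]$. The symmetrized Alexander polynomial of $S(a,b,k)$ admits a combinatorial formula from a doubly-pointed Heegaard diagram on $T^2$, whose $a$ generators are intersection points of the $\alpha$- and $\beta$-curves and each contribute $\pm t^{n_i}$ with $n_i$ a lattice-point quantity determined by $(a,b,k)$. Specializing to $(a,b,k)=(p,2q,10q)$ and summing with signs at $t=-1$ yields the asserted values. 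The bifurcation at $p/q=4$ reflects a qualitative change in the continued fraction expansion of $p/(2q)$, equivalently, in which of $2q$ and $p-2q$ is smaller.

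For the genus, since $S$ is primitive, $g(S)$ equals the largest Alexander grading in which $\HFK(L,S)$ is nonzero. For the simple knot $S(a,b,k)$, these extremal gradings are directly readable from the doubly-pointed Heegaard diagram above and form a piecewise-linear function of $(a,b,k)$. Substituting $(a,b,k)=(p,2q,10q)$ and distinguishing the two cases $2q \lessgtr p-2q$ (equivalently, $p/q \lessgtr 4$), the resulting expression simplifies to
\[
g(S) = \big|\,|4p-10\min\{2q,\,p-2q\}|-2\,\big|.
\]

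The principal obstacle will be uniformly carrying out the Alexander-polynomial and extremal-grading computations for the entire infinite family $\{S(p,2q,10q):p/q\in[3,6]\}$. The plan to handle this is to exploit the recursion that a single step of the continued fraction algorithm for $p/(2q)$ induces on simple knots, reducing the verification to a small number of base cases (near $p/q = 3, 4, 5, 6$) together with a one-step inductive comparison; the case dichotomy in both the $H_1$ and genus formulas comes out of the first step of that recursion.
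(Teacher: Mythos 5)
Your outline has a genuine gap at its central step, the Seifert-fiberedness of $\dcover(L,S)$. You assert that the Montesinos-trick lift of the bridge arcs produces a two-component link in $S^3$ ``whose complement is Seifert fibered,'' but nothing in your argument supports this, and it is not a feature of simple knots in general: for arbitrary $S(a,b,k)$ the branched double cover need not be Seifert fibered at all. Notice that your Seifert-fiberedness step never uses the hypotheses $p/q\in[3,6]$ or the fact that the homology class is $5$ times a core, which is a sign that the key input is missing. The paper's proof supplies exactly this input: by \cite[Theorem~8.2]{ye-constrained} (valid because the slope condition $\min\{2q,p-2q\}/p\in[1/3,1/2]$ holds precisely on this interval), the simple knot $S(p,2q,10q)$ is isotopic to the $(5,2)$-curve $T$ on the Heegaard torus. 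Once $S$ is realized as a curve on the Heegaard torus, one Seifert fibers both solid tori with $T$ as a regular fiber, and the fibration of the complement lifts to the double cover and extends over the filling torus --- the same argument as for torus knots in $S^3$. Without some substitute for this identification, your Seifert-fiberedness claim is unsupported.

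The same missing identification is what makes the quantitative parts tractable, and in your proposal those parts remain a plan rather than a proof: the ``lattice-point'' computation of $\Delta_S(-1)$ and of the extremal Alexander gradings for the whole family is deferred to a continued-fraction recursion that you do not set up, and you acknowledge it as the principal obstacle. In the paper, Seifert--Van Kampen applied to the splitting along the annulus $\Sigma\setminus N(T)$ gives $\pi_1(L\setminus N(T))\cong\langle x,y\mid x^5=y^d\rangle$ with $d=|2p-5\min\{2q,p-2q\}|$, so $\Delta_S=\Delta_{T_{5,d}}$; this yields $g(S)=\deg\Delta_{T_{5,d}}=\big|\,|4p-10\min\{2q,p-2q\}|-2\,\big|$ (using Rasmussen's relation between $\deg\Delta$ and the top Alexander grading of $\HFK$), and, via Lemma~\ref{lem:h1-bdc-alex}, $|H_1(\dcover(L,S))|=p\cdot|\Delta_{T_{5,d}}(-1)|$, which is $p$ or $5p$ according to whether $d$ is odd ($p/q\in[3,4)$) or even ($p/q\in(4,6)$). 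So the case dichotomy comes from the parity of $d$, not from a recursion on continued fractions. One small additional slip: your homology statement is claimed for $p/q\in(4,6]$, while at $p/q=6$ the hypotheses force consideration only of $p/q<6$ cases as in the proposition; this is minor compared with the structural gap above.
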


\begin{proof}
Let $a,b$ be coprime positive integers with $b/a\in[1/3, 2/3]$ and $\gcd(a,5)=1$; eventually we will take $(a,b)=(p,2q)$. We will attempt to understand the branched double cover and genus of  the primitive simple knot \[S(a,b,5b) \subset L(a,b).\] Without loss of generality, we can assume that \[b/a \in [1/3,1/2]\] Indeed, if $b/a\in [1/2,2/3]$ then $(a-b)/a\in [1/3,1/2],$ and  we can just consider $S(a,a-b,5b)$ instead, as it is  the  orientation-reverse of the mirror of $S(a,b,5b)$.

In \cite[\S 8]{ye-constrained}, Ye determines  when a simple knot in $L(a,b)$ is isotopic to a torus knot on the Heegaard torus in the genus-1 splitting of the lens space. More precisely, let us think of $L(a,b)$ as obtained via $(a/b)$-surgery on the unknot $A\subset S^3$, with genus-1 Heegaard splitting given by the union of \[H_\alpha = S^3\setminus N(A)\] with the surgery solid torus $H_\beta$, as above. In particular, the meridian of $H_\beta$ is glued along the curve $a\mu + b\lambda$,
with respect to a meridian $\mu$ and Seifert longitude $\lambda$ of $A$ on $\partial H_\alpha$, with $\mu\cdot\lambda = -1$. Let \[\Sigma = \partial H_\alpha = -\partial H_\beta\] be the Heegaard torus in this splitting. Let $T$ be the $(5,2)$-torus knot on $\Sigma$---that is, the simple closed curve on  $\Sigma$ which is homologous in $\Sigma$
to $5\mu + 2\lambda$. Since $b/a\in [1/2,1/3]$, \cite[Theorem 8.2]{ye-constrained} says that
\[S(a,b,5b)\cong T.\] Indeed, following the discussion above, it is easy to see that \[[S(a,b,5b)]=[T]\in H_1(L(a,b)),\] so it suffices to show that $T$ is simple. The   basic idea  behind Ye's result, inspired by \cite[\S 3.1]{greene-lewallen-vafaee}, is  that if we fix a Heegaard diagram $(\Sigma,\alpha,\beta)$ for the Heegaard splitting \[L(a,b) = H_\alpha \cup_\Sigma -H_\beta\] where $\alpha$ is homologous to $\lambda$ and  $\beta$ is homologous to $a\mu + b\lambda$,  then  the slope $5/2$ is sufficiently close to $a/b$ in this case that we can represent $T$ as the union of an arc in $\beta$ with an arc in $\alpha$.  Thus, $T$ is simple.

Our goals are then  to show that the branched double cover of $T$ is Seifert fibered,   compute its genus, and  compute the order of its branched double cover.

For the former,   note that one can Seifert fiber the solid tori $H_\alpha$ and $H_\beta$ so that $T$ is a regular fiber in both. These glue  to give a Seifert fibration of $L(a,b)$ with two singular fibers in which $T$ is a regular fiber. The complement of a neighborhood of $T$ is thus Seifert fibered, so the double cover of this complement is as well (each circle fiber lifts to one or two circles). The Seifert fibration on this double cover then extends across the solid torus we glue in to form the branched double cover of $T$. (This is the same reasoning that shows that torus knots in $S^3$ have Seifert fibered branched double covers.)

We turn next to the genus calculation.
The  knot Floer homology of $T$, and therefore the genus of this knot, is determined by its graded Euler characteristic since simple knots are  Floer simple. Let $L = L(a,b).$ Following Rasmussen in \cite[\S3.7]{rasmussen-lens-space-surgeries}, we have that \[\chigr(\HFK(L,T)) = \Delta_T(t) \cdot \frac{t^{a/2}-t^{-a/2}}{t^{1/2}-t^{-1/2}},\] where $\Delta_T(t)$ is the symmetrized Alexander polynomial of $T$. Letting $\deg(\Delta_T(t))$ denote the top degree of this Laurent polynomial, it follows that the top non-zero Alexander grading of $\HFK(L,T)$ is given by \[\deg(\Delta_T(t)) + \frac{a-1}{2}.\] On the other hand, from the discussion in the previous section, this top Alexander grading is also given by \[g(T) + \frac{a-1}{2}.\] Therefore, \[g(T) = \deg(\Delta_T(t)).\] It thus suffices to compute this Alexander polynomial. 

The Alexander polynomial $\Delta_T(t)$ can be computed via Fox calculus, from a presentation of the fundamental group of $L\setminus N(T)$, as described in \cite[\S3.7]{rasmussen-lens-space-surgeries}. One  finds a 2-generator, 1-relator presentation of this  group using Seifert--Van Kampen, exactly as for torus knots in $S^3$. Namely, the complement of $T$ is obtained by gluing the solid tori $H_\alpha$ and $H_\beta$ together along the annulus on $\Sigma$ given by \[C=\Sigma\setminus N(T).\] Let $x$ and $y$ be the homotopy classes of the cores of $H_\alpha$ and $H_\beta$, respectively. The core of $C$ is homotopic to $x^5$ in $\pi_1(H_\alpha)$ and $y^d$ in $\pi_1(H_\beta)$ (for some orientation of these cores), where \[d = |\Delta(5/2, a/b)| = |2a-5b|.\] Therefore,  we have a presentation \[\pi_1(L\setminus N(T)) = \langle x,y \mid x^5 = y^{d}\rangle.\] This is the same as the fundamental group of the complement of the torus knot
\[T_{5, d}\subset S^3,\] so Fox calculus tells us that $\Delta_T(t)$ agrees with the Alexander polynomial of this torus knot. In particular, \[g(T) = \deg(\Delta_T(t)) = \deg(\Delta_{T_{5,d}}(t)) = |2(d-1)|= \big|\,|4a-10b|-2\,\big|.\] Recall that if $b/a\in [1/2,2/3]$ then we replace $b$ with $a-b$ in this formula. In other words, we have that \[g(S(a,b,5b)) =  \big|\,|4a-10\min\{b,a-b\}|-2\,\big|.\] If $p,q$ are as in the hypotheses of this proposition, then letting $a=p$ and $b=2q$ yields the desired genus formula. (Note that $p$ and $2q$ are coprime, since $p$ and $q$ are  and $p$ is odd.)

Finally, we compute the order of the branched double cover of \[S=S(p,2q,10q)\subset L(p,2q)=L,\] where $p$ and $q$ are as in the hypothesis of the proposition. By Lemma \ref{lem:h1-bdc-alex}, we have that \begin{equation}\label{eqn:order-k} |H_1(\dcover(L,S))|=p\cdot |\Delta_S(-1)|. \end{equation}Recall that \[\Delta_S(t) = \Delta_{T_{5,d}}(t),\] for some $d$, as explained above. If $p/q\in [3,4)$, then $2q/p\in (1/2,2/3]$,
and  \[d = |2p-5(p-2q)|= |10q-3p|\]  is odd since $p$ is odd. In this case, an easy calculation shows that $|\Delta_{T_{5,d}}(-1)| =1$,
which implies that $|H_1(\dcover(L,S))|=p$ by \eqref{eqn:order-k}. If $p/q\in (4,6)$ instead, then $2q/p\in (1/3,1/2)$,
and \[d =  |2p-10q|\] is even. In this case, $|\Delta_{T_{5,d}}(-1)| =5$,
and therefore $|H_1(\dcover(L,S))|=5p$. 
\end{proof}

We remark that the formula for $g(S(p,2q,10q))$ from Proposition~\ref{prop:genus-SF} can be simplified for various values of $p/q$, as follows:
%.  For example, we know that $\min\{p-2q,2q\}$ is $p-2q$ if $p/q < 4$ and $2q$ otherwise. So, when $p/q \geq 4$ the genus simplifies to \[\big||4p-20q| - 2\big|;\] and if moreover $\frac{p}{q} < 5$ then $20q-4p \geq 4$, since the left side is strictly positive and a multiple of $4$, so the genus is then $20q-4p-2$.  Applying similar arguments to $\frac{p}{q} \in [3,4]$, we have
\begin{equation} \label{eq:simple-genus}
g(S(p,2q,10q)) = \begin{cases}
20q - 6p - 2, & 3 \leq p/q < 10/3 \\
6p - 20q - 2, & 10/3 < p/q < 4\\
20q - 4p - 2,& 4 < p/q < 5 \\
4p - 20q - 2, & 5 < p/q \leq 6
\end{cases}
\end{equation}
We have omitted the cases $p/q=10/3,4,5$ above because of the requirement that $p$ be odd and not a multiple of 5.

\begin{proof}[Proof of Theorem \ref{thm:simple-abelian}]
Suppose  $Y=S^3_{p/q}(K)$ is  $SU(2)$-abelian for $p$ and $q$ as in the theorem.  Proposition \ref{prop:surgery-cover} implies that \[Y\cong \dcover(L,J)\] for some primitive knot $J$ in the lens space $L = L(p,2q),$ where  $J$ (for some orientation) is homologous  to the simple knot \[S=S(p,2q,10q),\] as described above. Proposition \ref{prop:surgery-cover} also implies that $Y$ is not Seifert fibered.
From here, Theorem \ref{thm:floer-simple} tells us that \[\dim_\C \KHI(L,J)=|H_1(L)|,\] and so Theorem \ref{thm:classify-simple-knots} implies that $\Delta_J(t) = \Delta_S(t)\textrm{ and }g(J) = g(S).$

If $p/q\in (4,5)$, then \[|H_1(\dcover(L,J))| = p\cdot |\Delta_J(-1)| = p\cdot |\Delta_S(-1)| = |H_1(\dcover(L,S))|,\] by Lemma \ref{lem:h1-bdc-alex}. But the latter  order is  $5p$, by Proposition \ref{prop:genus-SF}. This is a contradiction, since $Y = \dcover(L,J)$ has order $p$ first homology, as $(p/q)$-surgery on a knot in $S^3$.

Finally, if \[g(S)\leq \frac{p+1}{4},\] then the same is true of the genus of $J$, in which case
$J$ is isotopic to $S,$ by Theorem \ref{thm:classify-simple-knots}. But then $Y\cong\dcover(L,J)  \cong \Sigma(L,S)$ is Seifert fibered, by Proposition \ref{prop:genus-SF}, a contradiction.
\end{proof}

Finally, we combine Theorems \ref{thm:power-2} and \ref{thm:simple-abelian} and Proposition \ref{prop:genus-SF} to prove Theorem \ref{thm:other-small-surgeries}.

\begin{proof}[Proof of Theorem \ref{thm:other-small-surgeries}]
Let $p$ and $q$ be coprime positive integers with $p/q\in [3,7)$, where  $p$ is a prime power, as in the hypothesis of the theorem, and let $K\subset S^3$ be a nontrivial knot. We know that \[Y=S^3_{p/q}(K)\] is not $SU(2)$-abelian when $p$ is even, by Theorem \ref{thm:power-2}.

Suppose  now that  $p$ is odd with $\gcd(p,5) = 1$. Let us assume first that $p/q\in [4,5)$. Then Theorem \ref{thm:simple-abelian} says that $Y$ is not $SU(2)$-abelian.  Next, assume that $p/q\in [3,4)$ and \[23p-9 \leq 80q \leq 25p+9.\] These inequalities are equivalent to the statement that  both $20q-6p-2 \textrm{ and } 6p-20q-2$ are  at most $(p+1)/4$. Thus, \[g(S(p,2q,10q))\leq \frac{p+1}{4},\] by \eqref{eq:simple-genus}. Therefore, Theorem \ref{thm:simple-abelian} says that $Y$ is not $SU(2)$-abelian.
\end{proof}

\bibliographystyle{alpha}
\bibliography{References}

\end{document}